\newtheorem{lemma}{Lemma}
\newtheorem{proposition}{Proposition}
\newtheorem{theorem}{Theorem}
\theoremstyle{remark}
\newtheorem{remark}{Remark}
\theoremstyle{definition}
\newtheorem{definition}{Definition}
\newcommand{\trema}{\"}
\newcommand{\R}{\ensuremath{\mathbb{R}}}
\newcommand{\diese}{{\scriptscriptstyle \#}}
\newcommand{\per}{{\rm \scriptscriptstyle per}}
\newcommand{\ext}{\mathrm{ext}}
\newcommand{\dif}{\mathrm{d}}
\newcommand{\loc}{{\rm \scriptscriptstyle loc}}
\newcommand{\transpose}{{\rm t}}
\DeclareMathOperator*{\esssup}{ess \, sup}
\DeclareMathOperator*{\essinf}{ess \, inf}
\DeclareMathOperator{\Div}{div}
\DeclareMathOperator{\supp}{supp}
\DeclareMathOperator*{\cO}{\mathit{O}}
\DeclareMathOperator*{\co}{\mathit{o}}
\begin{document}
\title[Front propagation by an Arrhenius kinetic]{On the propagation of a periodic flame front by an Arrhenius kinetic}
\author{Natha{\"e}l Alibaud}
\author{Gawtum Namah}
\address[Natha\"{e}l Alibaud and Gawtum Namah]{UMR CNRS 6623, Universit\'e de Franche-Comt\'e\\
16 route de Gray\\
25 030 Besan\c{c}on cedex, France}
\email{nathael.alibaud@ens2m.fr, gawtum.namah@ens2m.fr}
\thanks{During this research, the first author was supported by the ``French ANR
project CoToCoLa, no. ANR-11-JS01-006-01.''} 
\date{\today}
\subjclass[2010]{Primary 
35R35, 80A25, 35C07, 35B10; Secondary 35B27, 80M35.}
\keywords{Free boundary problems, front propagation, combustion, Arrhenius law, travelling wave solutions, periodic solutions, homogenization, curvature effects, asymptotic analysis}

\begin{abstract}
We consider the propagation of a flame front in a solid medium with a periodic structure. The model is governed by a free boundary system for the pair ``temperature-front.'' The front's normal velocity depends on the temperature via a (degenerate) Arrhenius kinetic. It also depends on 
the front's mean curvature. We show the existence of travelling wave solutions for the full system and consider their homogenization as the period tends to zero. We analyze the curvature effects on the homogenization and obtain a continuum of limiting waves parametrized by the limiting ratio ``curvature coefficient/period.'' This analysis provides valuable information on the heterogeneous propagation as well.
\end{abstract}
\maketitle
\tableofcontents

\section{Introduction}

We investigate the propagation of a flame front in a solid heterogeneous
medium $\mathbb{R}_{x}\times\mathbb{R}_{y}$. Throughout $Y>0$ is a fixed
period and we consider a solid with horizontal striations which are
$Y$-periodic in $y$. The fresh region is assumed to be a hypograph~$\{x<\xi
(y,t)\}$ with a temperature $T=T(x,y,t)$. The flame front $\{x=\xi(y,t)\}$ is
assumed to propagate to the left. A typical representation is given in Figure \ref{fig-striations}, which shows the propagation through a medium consisting of a periodic superposition of two layers.

\begin{figure}[ht!]
\begin{center}
\scalebox{0.40}{\input{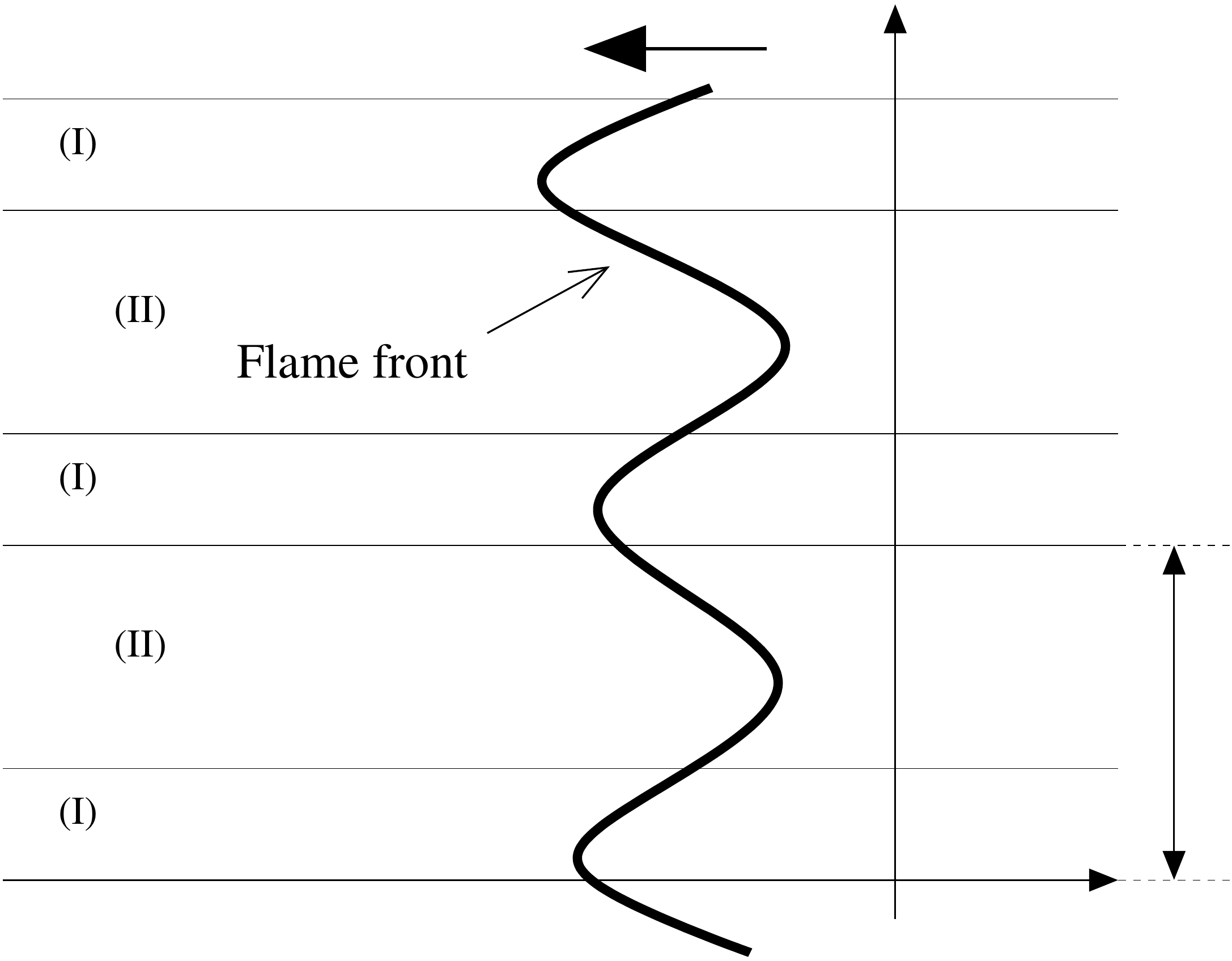_t}}
\caption{Periodic superposition of two materials (I) and (II).}
\label{fig-striations}
\end{center}
\end{figure}

We consider the model where the evolution of
$(\xi,T)$ is governed by the free boundary system
\begin{equation}%
\begin{cases}
b\,T_{t}-\Div(a\,\nabla T)=0, & \quad x<\xi(y,t),~t>0,\\[1ex]
\xi_{t}+R(y,T)\sqrt{1+\xi_{y}^{2}}=\mu\,\frac{\xi_{yy}}{1+\xi_{y}^{2}}, &
\quad x=\xi(y,t),~t>0,
\end{cases}
\label{free-bdry-sys}%
\end{equation}
subject to the boundary conditions
\begin{equation}%
\begin{cases}
a\,\frac{\partial T}{\partial\nu}=g\,V_{n}, & \quad x=\xi(y,t),\\[1ex]
T(x,y,t)\rightarrow0, & \quad \mbox{as~$x\rightarrow -\infty$},
\end{cases}
\label{bdry-cond}%
\end{equation}
where $\nu=\frac{(1,-\xi_{y})}{\sqrt{1+\xi_{y}^{2}}}$ is the outward unit
normal and $V_{n}$ is the normal velocity of the front. Throughout ``$\Div$,'' ``$\nabla$'' and ``$\frac{\partial}{\partial \nu}$'' denote the divergence, gradient and normal derivative operators, respectively. In \eqref{bdry-cond}, $T \to 0$ as $x \to -\infty$ because the steady state temperature of the fresh region is normalized to zero. The second
equation of \eqref{free-bdry-sys} just states that the front propagates with a
normal velocity $V_{n}$ given by
\[
V_{n}=-R(y,T)-\mu\,\kappa,
\]
where $\kappa$ is the mean curvature and $\mu$ is a positive {\it curvature coefficient.} This latter coefficient is related to the {\it surface tension}\footnote{Rigourously speaking, as here we have a solid/gas interface, we must rather talk of {\it interface energy} instead of {\it surface tension} which is usually used for liquid interfaces.} effects.
The propagation is not just a
geometric one (as it was the case for example in \cite{ChNa97,NaRo97,ChNa02}) because here the {\it combustion rate} $R=R(y,T)$ depends also on the temperature at the front. This dependence is typically given by an {\it Arrhenius kinetic} of the form:
\begin{equation}\label{combustion-type}
R=A \, e^{-\frac{E}{T}}, 
\end{equation}
where $A$ is a {\it prefactor} and $E$ is related to the {\it activation energy}. These parameters can depend on the layers which means that $A=A(y)$ and $E=E(y)$. 
As far as the other data are concerned, $a=a(y)$ represents the {\it thermal diffusivity,} $b=b(y)$ the {\it heat capacity}, and $g=g(y)$ represents a fraction of the total {\it heat release} and which serves to heat the solid thus making the combustion self-sustained. For more details about this model see
\cite{BrFiNaSc93,ChNa97,NaRo97,ChNa02} and the references therein.

\medskip

From now we assume that each parameter $f=a,b,g$ satisfies: 
\begin{enumerate}[label={{\rm (A\arabic*)}}]
\item \label{hyp-param-1} The function $f:\R \to \R$ is measurable and $Y$-periodic.
\smallskip
\item \label{hyp-param-2} There are constants $f_M \geq f_m >0$ such that 
$$
f_{M}\geq f(y)\geq f_{m} \quad \mbox{for almost each $y \in \R$}.
$$
\end{enumerate}
The parameter $R$ is assumed to be a function from $\R \times \R^+$ into $\R^+$ such that:

\begin{enumerate}[label={{\rm (A\arabic*)}}]
\setcounter{enumi}{2}
\item\label{hyp-arrh-1} For almost each $y \in \R$, $T \mapsto R(y,T)$ is continuous and nondecreasing.
\smallskip
\item\label{hyp-arrh-2} For each $T >0$, $y \to R(y,T)$ is measurable and $Y$-periodic.
\smallskip
\item\label{hyp-arrh-3} There is a constant $R_M>0$ such that 
$$
R_M \geq R(y,T) >0 \quad \mbox{for almost each $y \in \R$ and all $T>0$.}
$$ 
\item \label{hyp-arrh-4} For any $T >0$, we have: 
$$
\essinf_{y \in \R} R(y,T)>0.
$$
\end{enumerate}
In the above, $\mathbb{R}^{+}$ denotes the interval $(0,+\infty)$. The function $R$ is in particular of Carath\'eodory's type by \ref{hyp-arrh-1}--\ref{hyp-arrh-2}. Note that \ref{hyp-arrh-1}--\ref{hyp-arrh-4} are satisfied by the typical combustion rate in \eqref{combustion-type} whenever $A$ and $E$ satisfy \ref{hyp-param-1}--\ref{hyp-param-2}. This is exactly what occurs for the striated solid medium we consider where all these parameters are constant on the layers. They therefore take just two values for the case presented in Figure \ref{fig-striations}. Let us point out
at this stage that 
for reasons associated to mathematical analysis, 
the existing papers usually impose a 
``positive
lower bound to the front's speed''---that is to $R$ in our case---, see for instance \cite{BrFiNaSc93,ChNa97,NaRo97,ChNa02,LoCh09,ChJi13}. In our setting, this might be not satisfactory because the typical $R$ in \eqref{combustion-type} decays towards zero as $T \downarrow 0$. Actually, we will be brought to occasionally prescribe a ``slower decay'' to $R$ at the neighborhood of $T=0$, but we will always treat situations
where it may go to zero. Moreover a large part of our results work for the  Arrhenius law given in~\eqref{combustion-type}.

Let us continue with some general comments on the literature. Models of similar type are considered in \cite{BrFiNaSc93,ChNa97,NaRo97,ChNa02,LoCh09} but with slightly different assumptions. In \cite{BrFiNaSc93}, the striations are vertical and the front's profile is a straight line. As a consequence its equation reduces to an ODE. In \cite{ChNa97,NaRo97,ChNa02,LoCh09}, the propagation is purely geometric (that is to say $R=R(y)$ only) and the analysis concerns the sole front's equation. In the more recent paper \cite{ChJi13}, a full free boundary system somewhat similar to ours has been
studied in the context of solidification process in crystal growth. The front's propagation is governed by a Hamilton-Jacobi equation (where the surface tension effects are neglected). The main purpose of \cite{ChJi13} concerns the global
in time existence of a solution of the Cauchy problem via the study of the
regularity of the expanding front. 
The front's velocity considered 
is relaxed
regularitywise: It is just H\"older continuous as compared to the natural
Lipschitz regularity used for Hamilton Jacobi equations. In all these references, 
the speed of propagation is assumed to have a positive lower bound. 


In this paper, we focus on the study of travelling wave solutions
to~\eqref{free-bdry-sys}--\eqref{bdry-cond}. Our first purpose is to show the existence of such solutions. This comes to looking for fronts
and temperatures of the form%
$$
\xi(y,t)=-c\,t+v(y)\quad\mbox{and}\quad T(x,y,t)=u(x+c\,t,y),
$$
where $c>0$ will be the speed of the wave and $v$ its profile. It is
convenient to fix the front through the change of variable~$x+c\,t\mapsto x$.
This leads to the problem of finding a triplet~$(c,v,u)$ such that%
\begin{equation}%
\begin{cases}
c\,b\,u_{x}-\Div(a\,\nabla u)=0, & \quad x<v(y),\\[1ex]
a\,\frac{\partial u}{\partial\nu}=\frac{c\,g}{\sqrt{1+v_{y}^{2}}}, & \quad
x=v(y),\\[1ex]
u(x,y)\rightarrow0, & \quad \mbox{as~$x\rightarrow -\infty$,}
\end{cases}
\label{T-eqn}%
\end{equation}
and%
\begin{equation}
-c+R(y,u) \sqrt{1+v_{y}^{2}}=\mu\,\frac{v_{yy}}{1+v_{y}^{2}},\quad
x=v(y).
\label{front-eqn}%
\end{equation}
Note that if there exists a travelling
wave, the profile $v$ will be defined up to an additive constant. For simplification and
without loss of generality, we will be brought in the course of the analysis
to fix this constant.

Let us recall that Equation \eqref{front-eqn} has been the object of a thorough study in
\cite{ChNa97} but only for $R=R(y)$.
The existence of 
%
a travelling wave $(c,v)$ is proved
with some characterization of the speed $c$ with respect to the curvature
coefficient $\mu$. In \cite{NaRo97,ChNa02},
this analysis is extended to oblique striations and in \cite{LoCh09} to almost periodic
media. In this paper, we will establish the existence of a nontrivial travelling wave
solution for the whole system \eqref{free-bdry-sys}--\eqref{bdry-cond}.
If we consider a general $R$ satisfying \ref{hyp-arrh-1}--\ref{hyp-arrh-4}, we will need to assume the 
period (or more precisely the ratio period/$\mu$)
to be small enough. This includes in particular the combustion rate in \eqref{combustion-type}. For general 
periods 
we will need to consider a more restrictive class of (degenerate) $R$, see \eqref{hyp-lim}. 

We unfortunately do not know whether these travelling wave solutions are unique. Nevertheless we can give some precise characterization by considering their homogenization as the period tends to zero. This is the second purpose of this paper. Let us first mention that a similar homogenization problem has been considered in \cite{BrFiNaSc93} for media with vertical striations. A remarkable difference with our setting is the absence of surface tension effects (recall that the front's profile is a straight line in that case). Here we propose an homogenization analysis that will provide information on the curvature effects too. For that we allow the curvature coefficient to depend on the period. This amounts to consider a family of triplets $(c^\varepsilon,v^\varepsilon,u^\varepsilon)$ satisfying: 
\begin{equation}%
\begin{cases}
c^{\varepsilon}\,b\left(\frac{y}{\varepsilon}\right) u_{x}^{\varepsilon}-\Div\left(
a\left(\frac{y}{\varepsilon}\right)\nabla u^{\varepsilon}\right)  =0, & \quad x<v^{\varepsilon
}(y),\\[1ex]
a\left(\frac{y}{\varepsilon}\right)\frac{\partial u^{\varepsilon}}{\partial\nu}=\frac
{c^{\varepsilon}\,g\left(\frac{y}{\varepsilon}\right)}{\sqrt{1+(v_{y}^{\varepsilon})^{2}}}, &
\quad x=v^{\varepsilon}(y),\\[1ex]
u^{\varepsilon}(x,y)\rightarrow0, & \quad\mbox{as~$x\rightarrow -\infty$,}
\end{cases}
\label{eps-T-eqn}%
\end{equation}
and%
\begin{equation}
-c^{\varepsilon}+R\left(\frac{y}{\varepsilon},u^{\varepsilon}\right)\sqrt{1+(v_{y}%
^{\varepsilon})^{2}}=\mu(\varepsilon) \,\frac{v_{yy}^{\varepsilon}}{1+(v_{y}^{\varepsilon
})^{2}},\quad x=v^{\varepsilon}(y),
\label{eps-front-eqn}%
\end{equation}
for some given $1$-periodic parameters $a$, $b$, etc., so that $\varepsilon$ is the new period of the medium.
We will show that the triplet $(c^\varepsilon,v^\varepsilon,u^\varepsilon)$ converges towards some $(c^0,v^0,u^0)$, as $\varepsilon \downarrow 0$, if and only if the ratio $\frac{\mu(\varepsilon)}{\varepsilon}$ converges towards some $\lambda \in [0,+\infty]$. This will define a continuum of homogenized waves parametrized by $\lambda$ for which several characteristics will be easier to compute such as the speed, the temperature at the front, etc. 

As far as the speed is concerned, we will show that the map 
$$
\lambda \mapsto c^0=c^0(\lambda)
$$ 
is (smooth and) decreasing. This means that the surface tension effects slow down the propagation. Also, the minimal and maximal speeds are achieved (only) at the regimes $\lambda=+\infty$ and $0$, respectively. For both of these regimes we will derive an explicit formula for $c^0$. Surprisingly for $\lambda=0$, $c^0$ is not given by some mean of the combustion rate. We shall see for instance that, for the typical model \eqref{combustion-type}, it reads
$$
c^0(0)=\esssup_{z} A(z) \, e^{-E(z) \, \frac{\overline{b}}{\overline{g}}},
$$
where $\overline{b}$ and $\overline{g}$ denote the mean values of $b$ and $g$, respectively.
This suggests that the respective width of each layer has no influence in that regime
as compared of course to the other intrinsic parameters of the material. This original feature was already observed in \cite{ChNa97} 
for a pure geometric propagation. Here we somewhat extend this observation for the full system ``front-temperature.'' Note finally that once the speed is known, the temperature $u^0$ will be an explicit exponential entirely determined by $c^0$.

As far as the front's profile is concerned, we will need a more careful analysis to get some interesting information. It indeed turns out that the homogenized profile is always a straight line (normalized to zero). We will then analyze the microscopic oscillations of the front's profile by establishing an asymptotic expansion of the form: 
\begin{equation*}
v^\varepsilon(y)=\varepsilon \, w \left(\frac{y}{\varepsilon}\right)+\co (\varepsilon),
\end{equation*}
where $w$ will be some corrector. We will show that this corrector is entirely determined by $\lambda$ and satisfies a pure geometric equation of the form considered in \cite{ChNa97,NaRo97,ChNa02,LoCh09}. As a byproduct, our analysis thus provides some relationship between these works and the whole system ``front-temperature.'' In the limiting regime $\lambda=+\infty$, we will show that the corrector equals zero everywhere so that the front is ``almost a straight line'' at the microscopic level too. To get more information in that case, we will then establish a second-order expansion of the form:
\begin{equation*}
v^\varepsilon(y)=\varepsilon^2 \, Q \left(\frac{y}{\varepsilon}\right)+\co (\varepsilon^2),
\end{equation*}
where we will identify the profile $Q$ too.
The identification of $w$ in the other limiting regime $\lambda=0$ is more difficult and remains open. A more detailed discussion will be done in the concluding remarks, see Section  \ref{conclusion}. 

Technically, the proof of the monotonicity of $\lambda \mapsto c^0(\lambda)$ will be the most difficult. It will rely on the implicit function theorem. The existence of a travelling wave solution will rely on the Schauder's fixed point theorem as well as on careful lower bounds on the temperature at the front. For the homogenization, we will not need sophisticated tools such as the two-scale convergence, etc., thanks to general arguments showing that $v^0$ and $u^0$ can a priori not depend on $y$ anymore. The identification of $c^0$ will call for different
technical properties such as, for example, the H\"older regularity of the temperature $u^\varepsilon$. 

Let us finally give some more references on related topics. Following the inhomogeneity considered, one can as well
obtain pulsating travelling fronts, that is where the speed of the travelling
wave is no longer a constant but is periodic in time. The preceding references \cite{ChNa02,NaRo97,LoCh09} are closely related to that subject. For a rather complete study in the framework of
reaction advection diffusion equations, see \cite{BeHa02}. Finally, for a survey on travelling waves, be
it in homogeneous, periodic or heterogeneous random media, see \cite{Xin00}.

The rest of this paper is organized as follows. Section \ref{sec-prelim} is devoted to some preliminaries on the equations of the front and temperature considered separately. Section \ref{sec-existence} is devoted to the existence of a travelling wave solution to 
\eqref{free-bdry-sys}--\eqref{bdry-cond}, see Theorems \ref{existence-tw} and \ref{existence-small-Y}. The homogenization analysis starts in Section \ref{sec-homogenization}, see Theorems \ref{Theorem-homogenization} and \ref{Theorem-homogenization-lambda}. 
The qualitative analysis of the speed is done in Section \ref{monotonicity}, see Theorem \ref{Theorem-C-monotone}. The asymptotic expansions of the front's profile are given in Section \ref{sec-expansion}, see Theorems \ref{thm-expansion}, \ref{continuum-corrector} and \ref{thm-expansion-bis}.
A synthesis and some open questions are proposed in Section \ref{conclusion}. For the sake of clarity, the technical or standard proofs are postponed in appendices together with a list of the main specific notations (see Appendix \ref{app-notations}).

\section{Preliminaries: Notations and first results}\label{sec-prelim}

The existence of a travelling wave solution will be proved with the help of
the Schauder's fixed point theorem by successively freezing the temperature
and the front. In this section we focus on the frozen problems. All along this section, $Y>0$ is a given fixed period and $\mu>0$ a given fixed parameter. 

\subsection{Front's well-posedness}

We first consider \eqref{front-eqn} for a fixed temperature. This comes therefore to finding $(c,v)$ which solves
\begin{equation}
\label{frozen-front-eqn}%
\begin{cases}
-c+H(y) \sqrt{1+v_{y}^{2}}=\mu\, \frac{v_{yy}}{1+v_{y}^{2}},\\[1ex]
v(y+Y)=v(y),
\end{cases}
\end{equation}
for (almost) every $y \in \R$, where $H$ is an arbitrary given function.
Here is a result from~\cite{ChNa97}.

\begin{theorem}
\label{front-well-posed} Let $H:\mathbb{R} \to \R$ be measurable and $Y$-periodic with
$$
H_m \leq H \leq H_M \quad \mbox{almost everywhere on} \quad \R,
$$
for some positive constants $H_m$ and $H_M$. Then there exists $(c,v) \in\mathbb{R}
\times W^{2,\infty}(\mathbb{R} )$ which satisfies~\eqref{frozen-front-eqn}
almost everywhere. The speed $c$ is unique and the profile $v$ is unique up to an additive constant. Moreover, 
\begin{equation}
\label{front-esti}H_{m} \leq c \leq H_{M} \quad\mbox{and} \quad\|v_{y}%
\|_{\infty} \leq\sqrt{\frac{H_{M}^{2}}{H_{m}^{2}}-1}.
\end{equation}
\end{theorem}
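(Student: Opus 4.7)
The plan is to reduce \eqref{frozen-front-eqn} to a scalar first-order ODE via the tangent-angle change of variable $\theta(y) := \arctan v_y(y) \in (-\pi/2, \pi/2)$. Since $1/\cos \theta = \sqrt{1+v_y^2}$ and $\theta_y = v_{yy}/(1+v_y^2)$, the front equation is equivalent to the system
$$\mu \, \theta_y = \frac{H(y)}{\cos \theta} - c, \qquad v_y = \tan \theta,$$
in which $v$ is $Y$-periodic if and only if $\theta$ is $Y$-periodic and $\int_0^Y \tan \theta\, dy = 0$ (the latter kills the linear drift $v(Y)-v(0)$). Integrating the first equation over one period forces the eigenvalue-like identity
$$c = \frac{1}{Y} \int_0^Y \frac{H(y)}{\cos \theta(y)}\, dy,$$
so the speed is a weighted mean of $H$ whose weight is the unknown $\theta$ itself.

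For the existence, I would first smooth $H$ into $H^\delta \in C^\infty$ preserving the bounds $H_m \leq H^\delta \leq H_M$, and look for a smooth solution $(c^\delta, v^\delta)$. A natural route is Schauder's fixed-point theorem in a convex compact subset of $C^1(\R/Y\Z) \times [H_m, H_M]$ obtained by freezing $v_y$ in the quasilinear coefficients, solving the resulting linear elliptic problem for $v$, and simultaneously enforcing the compatibility condition that fixes $c$; continuity and compactness follow from standard elliptic estimates. An alternative, perhaps more transparent, route is a shooting argument for the first-order ODE in $\theta$: for each $c \in (H_m, H_M)$, continuation from the constant case $H \equiv \text{const}$ (where $\theta \equiv 0$ and $c = H$ solve the problem) yields a $Y$-periodic $\theta(\cdot; c)$ depending continuously on $c$, after which one solves $F(c) := \int_0^Y \tan \theta(y; c)\, dy = 0$ by the intermediate value theorem, exploiting a sign change of $F$ across $[H_m, H_M]$.

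The a priori bounds \eqref{front-esti} follow from elementary extremum arguments that are uniform in $\delta$. At a minimum (resp.\ maximum) $y_\ast$ of $v^\delta$ one has $v^\delta_y(y_\ast) = 0$ and $v^\delta_{yy}(y_\ast) \geq 0$ (resp.\ $\leq 0$), so \eqref{frozen-front-eqn} gives $c^\delta \leq H^\delta(y_\ast) \leq H_M$ (resp.\ $c^\delta \geq H^\delta(y_\ast) \geq H_m$). At any point where $(v^\delta_y)^2$ is maximal one has $v^\delta_{yy}(y_\ast) = 0$, so \eqref{frozen-front-eqn} yields $c^\delta = H^\delta(y_\ast) \sqrt{1+(v^\delta_y(y_\ast))^2}$ and hence $\|v^\delta_y\|_\infty \leq \sqrt{(c^\delta)^2/H_m^2 - 1} \leq \sqrt{H_M^2/H_m^2 - 1}$. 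The equation then provides a uniform $L^\infty$ bound on $v^\delta_{yy}$, so a subsequence of $(c^\delta, v^\delta)$ converges in $\R \times C^1(\R/Y\Z)$ and weakly-$\star$ in $W^{2,\infty}$ to some $(c, v)$ solving \eqref{frozen-front-eqn} almost everywhere for the original merely measurable $H$. Uniqueness of $c$ and of $v$ up to an additive constant is obtained by a standard sliding argument: if $(c_1, v_1)$ and $(c_2, v_2)$ were two solutions with $c_1 \geq c_2$, translating $v_2$ vertically until it first touches $v_1$ from above and invoking the strong maximum principle for the elliptic ODE satisfied by the difference forces $v_1 - v_2 \equiv \text{const}$ and hence $c_1 = c_2$. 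The main obstacle throughout is the eigenvalue-like coupling between $c$ and $v$: one cannot prescribe $c$ and then solve for $v$, nor the other way around, which is why the fixed-point or shooting step is the core of the argument rather than a direct maximum principle.
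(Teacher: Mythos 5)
The paper does not actually prove this statement: Theorem \ref{front-well-posed} is imported from \cite{ChNa97}, so your proposal can only be judged on its own terms. Several parts of it are sound: the reduction to the scalar ODE $\mu\,\theta_y=H(y)/\cos\theta-c$ with $\theta=\arctan v_y$, the observation that periodicity of $v$ is equivalent to periodicity of $\theta$ plus $\int_0^Y\tan\theta=0$, the extremum arguments giving $H_m\le c\le H_M$ and $\|v_y\|_\infty\le\sqrt{H_M^2/H_m^2-1}$ for smooth data, the limit $\delta\downarrow 0$, and (up to a sign issue noted below) the sliding argument for uniqueness. The genuine gap is the existence step, which is the core of the theorem, and neither of your two routes establishes it.

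The shooting route rests on a false premise. If $\theta$ is any $Y$-periodic solution of $\mu\,\theta_y=H\sec\theta-c$, integrating over one period gives $c=\frac1Y\int_0^Y H\sec\theta\ \ge\ \overline{H}:=\frac1Y\int_0^Y H$. Hence for every $c\in(H_m,\overline{H})$ there is \emph{no} periodic $\theta$ at all (unless $H\equiv H_m$ a.e.), so $F(c)$ is undefined on most of the interval $[H_m,H_M]$ on which you want to apply the intermediate value theorem. Moreover, where periodic solutions do exist they are generically non-unique: already for constant $H$ and $c>H$ there are two, $\theta\equiv\pm\arccos(H/c)$, which merge at $c=H$; so ``the'' branch $\theta(\cdot;c)$, its continuity in $c$, and the claimed sign change of $F$ are unjustified --- the correct selection of $c$ is a saddle-node/critical-value mechanism, not an IVT along a single branch, and ``continuation from the constant case'' cannot reach values of $c$ for which the target problem is unsolvable. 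The Schauder route has a different defect: freezing $q=v_y$, the compatibility condition determines $c[q]$, but the solution of the frozen problem only obeys $\|v_y\|_\infty\le C\,\frac{Y H_M}{\mu}\,(1+\|q\|_\infty^2)^{3/2}$, so a ball $\{\|q\|_\infty\le M\}$ is invariant only when $Y H_M/\mu$ is small; the a priori bound $\sqrt{H_M^2/H_m^2-1}$ holds for genuine solutions but is not preserved by the iteration, so the fixed-point set-up as described does not close for arbitrary $Y,\mu$, whereas the theorem is unconditional. Finally, in the uniqueness argument the direction matters: with $c_1\ge c_2$, the function $v_2+k-v_1$ is a \emph{sub}solution of the linearized equation, and a nonnegative subsolution may perfectly well attain an interior zero minimum (e.g.\ $y^2$ for $w''\ge0$); you must instead work with $v_1-v_2-\min(v_1-v_2)$, a nonnegative supersolution vanishing at an interior minimum, to invoke the strong minimum principle. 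The uniqueness slip is easily repaired, but the existence mechanism as proposed is not.
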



\subsection{Temperature's well-posedness}

\label{Sobolev-preliminaries}

Now we suppose that the profile $v$ of the front is given and we introduce the
notations
\begin{equation*}
\Omega:=\left\{ (x,y) \in \R^2:x<v(y) \right\} , \quad\Gamma:=\left\{ (x,y) \in \R^2:x=v(y)
\right\} ,
\end{equation*}
as well as
\begin{equation*}
\Omega_{\diese} := \Omega\cap\{0<y<Y\} \quad\mbox{and}
\quad\Gamma_{\diese} := \Gamma\cap\{0<y<Y\}.
\end{equation*}
For simplicity, we do not specify the dependence on $v$.

\subsubsection*{Periodic Sobolev's spaces}

We proceed by defining the functional framework that we will need. We use the
subscript ``$_{{\diese}}$'' for spaces of functions which are
$Y$-periodic in~$y$. Hereafter we consider the Hilbert spaces
\begin{align*}
L_{\diese}^{2}(\Omega):=  & \left\{  u\in L_{\loc}%
^{1}(\overline{\Omega}):\mbox{$u$ is~$Y$-periodic in~$y$
and~$\int_{\Omega_\diese} u^2<+\infty$}\right\}  ,\\
H_{\diese}^{1}(\Omega):=  & \left\{  u\in
L_{\diese}^{2}(\Omega):\nabla u\in\left(
L_{\diese}^{2}(\Omega)\right)  ^{2}\right\},
\end{align*}
endowed with the following norms and semi-norm:
\begin{equation*}
\| u\|_{L_{\diese}^{2}(\Omega)}:=\left(  \frac{1}{Y}%
\int_{\Omega_{\diese}}u^{2}\right)  ^{\frac{1}{2}}%
,\quad|u|_{H_{\diese}^{1}(\Omega)}:=\left(  \frac{1}{Y}%
\int_{\Omega_{\diese}}|\nabla u|^{2}\right)  ^{\frac{1}{2}%
}
\end{equation*}
and $\| u\|_{H_{\diese}^{1}(\Omega)}:=\left(  \|
u\|_{L_{\diese}^{2}(\Omega)}^{2}%
+|u|_{H_{\diese}^{1}(\Omega)}^{2}\right)  ^{\frac{1}{2}}.$

\subsubsection*{Extension operator}

We next define the extension of $u$ to $\mathbb{R}^{2}$, which we will use
throughout, by 
``reflection'' as follows:
\begin{equation}
\mathrm{ext}(u)(x,y):=%
\begin{cases}
u(x,y), & \quad x<v(y),\\
u(2\,v(y)-x,y), & \quad x>v(y).
\end{cases}
\label{extension}%
\end{equation}
Recall that $\mathrm{ext}:H_{\diese}^{1}(\Omega)\rightarrow
H_{\diese}^{1}(\mathbb{R}^{2})$ is linear and bounded with
\begin{equation}
\|\mathrm{ext}(u)\|_{H_{\diese}^{1}(\mathbb{R}^{2})}\leq
C\left(  \| v_{y}\|_{\infty}\right)  \| u\|
_{H_{\diese}^{1}(\Omega)};\label{ext-esti}%
\end{equation}
see \cite{Ada75}.\footnote{Note that $C \left( \|v_y\|_\infty\right)$ does not depend on the period $Y$, which is 
immediate by direct computations.} For brevity, $\mathrm{ext}(u)$ will be simply denoted by $u
$.

\subsubsection*{Trace operator}

The front $\Gamma$ will be endowed with its superficial measure, that is for any
$\varphi\in C_{c}(\Gamma)$,
\begin{equation*}
\int_{\Gamma} \varphi:=\int_{\mathbb{R} } \varphi(v(y),y) \sqrt{1+v_{y}%
^{2}(y)} \, \mathrm{d} y.
\end{equation*}
For brevity, we will denote by $u_{|_\Gamma}$ or simply $u$ the trace of $u \in H^1_\diese(\Omega)$ on $\Gamma$, see \cite{Ada75}. Recall that $u \in H^1_\diese(\Omega) \mapsto u_{|_\Gamma} \in H^\frac{1}{2}_\diese(\Gamma)$ is well-defined linear and bounded, so that the function
$$
w:y \in \R \mapsto u(v(y),y) \in \R
$$
is well-defined (up to some negligible set) and belongs to $H^\frac{1}{2}_\diese(\R)$ with
\begin{equation*}
\|w\|_{H_{ \diese}^{\frac
{1}{2}}(\R)} \leq C\left( Y,\|v_{y}\|_{\infty} \right)  
\|u\|_{H_{\diese}^{1}(\Omega)}.
\end{equation*}

\subsubsection*{Well-posedness}

We can now state the well-posedness of the temperature.

\begin{definition}[Variational solutions] Assume \ref{hyp-param-1}--\ref{hyp-param-2} and let us consider $c \in \R$ and $v
\in W_{\diese}^{1,\infty}(\mathbb{R} )$. We
say that $u$ is a variational solution to~\eqref{T-eqn} if
\begin{equation}
\label{var-eqn}%
\begin{cases}
u \in H^{1}_{\diese}(\Omega),\\
\int_{\Omega_{\diese}} \left( c \, b \, u_{x} \, w + a \,
\nabla u \, \nabla w \right) =\int_{\Gamma_{\diese}} \frac{c \,
g}{\sqrt{1+v_{y}^{2}}} \, w, \quad\forall w \in H^{1}_{\diese}(\Omega).
\end{cases}
\end{equation}

\end{definition}

\begin{remark}
\label{var-equiv-dis} Note that $u$ is a variational solution to \eqref{T-eqn}
if and only if
\begin{equation}
\label{dis-eqn}%
\begin{cases}
u \in H^{1}_{\diese}(\Omega),\\
\int_{\Omega} \left( c \, b \, u_{x} \, \varphi+ a \, \nabla u \,
\nabla\varphi\right) =\int_{\Gamma} \frac{c \, g}{\sqrt{1+v_{y}^{2}}} \,
\varphi, \quad\forall\varphi\in C^1_{c}(\mathbb{R} ^{2}).
\end{cases}
\end{equation}
\end{remark}

The proof of the above remark is standard and postponed in Appendix
\ref{tech-app} for completeness.

\begin{theorem}
\label{T-well-posed} Assume~\ref{hyp-param-1}--\ref{hyp-param-2} and let~$c>0$ and~$v\in
W_{\diese}^{1,\infty}(\mathbb{R})$. Then there exists a unique
variational solution~$u\in H_{{\diese}}^{1}(\Omega)$
to~\eqref{T-eqn}. Moreover, this solution satisfies
\begin{equation}
|u|_{H_{\diese}^{1}(\Omega)}\leq\frac{2\,c\,g_{M}^{2}}%
{a_{m}\,b_{m}},\label{H1-esti}%
\end{equation}
and for almost every $(x,y)\in\Omega$,
\begin{equation}
\frac{g_{m}\,a_{m}}{a_{M}\,b_{M}}\,e^{c\,\frac{b_{M}}{a_{m}}\left(  x-\|
v\|_{\infty}\right)  }\leq u(x,y)\leq\frac{g_{M}\,a_{M}}{a_{m}\,b_{m}%
}\,e^{c\,\frac{b_{m}}{a_{M}}\left(  x+\| v\|_{\infty}\right)
}.\label{Linfty-esti}%
\end{equation}

\end{theorem}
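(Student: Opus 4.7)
The plan is to combine a Lax--Milgram argument on truncated domains with the construction of explicit exponential barriers. The bilinear form $B(u,w):=\int_{\Omega_{\diese}}(c\,b\,u_{x}\,w+a\,\nabla u\cdot\nabla w)$ and the linear form $\ell(w):=\int_{\Gamma_{\diese}}\frac{c\,g}{\sqrt{1+v_{y}^{2}}}\,w$ are continuous on $H^{1}_{\diese}(\Omega)$ by~\ref{hyp-param-1}--\ref{hyp-param-2}, the trace theorem and~\eqref{ext-esti}, but $B$ is not coercive on the whole space because $\Omega_{\diese}$ is unbounded in the $-x$ direction (there is no Poincar\'e inequality on $H^{1}_{\diese}(\Omega)$). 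The strategy is therefore to fix $L>\|v\|_{\infty}$, solve~\eqref{var-eqn} on the truncated domain $\Omega_{L}:=\Omega\cap\{x>-L\}$ with the additional Dirichlet condition $u=0$ on $\{x=-L\}$, and then pass to the limit $L\to+\infty$.

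On the truncated space $V_{L}:=\{u\in H^{1}_{\diese}(\Omega_{L}):u(-L,\cdot)=0\}$, an integration by parts in $x$ (using $b=b(y)$ and the Dirichlet condition at $x=-L$) produces the identity
\[
B(u,u)=\frac{c}{2}\int_{0}^{Y}b(y)\,u^{2}(v(y),y)\,\dif y+\int_{\Omega_{L,\diese}}a\,|\nabla u|^{2}\geq a_{m}\,\|\nabla u\|_{L^{2}}^{2},
\]
which, combined with Poincar\'e on $V_{L}$, gives full $H^{1}$ coercivity, so Lax--Milgram produces a unique $u_{L}\in V_{L}$. Testing the equation with $u_{L}$ and applying Young's inequality $c\,g\,u_{L}\leq\tfrac{c}{4}\,b\,u_{L}^{2}+c\,g^{2}/b$ on the right-hand side yields a uniform (in $L$) bound on $|u_{L}|_{H^{1}_{\diese}}$ of the type~\eqref{H1-esti}. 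For the uniform $L^{\infty}$ control I would construct explicit exponential barriers: with $\gamma:=c\,b_{m}/a_{M}$ and $\gamma':=c\,b_{M}/a_{m}$, the functions
\[
\overline u(x,y):=\frac{g_{M}\,a_{M}}{a_{m}\,b_{m}}\,e^{\gamma(x+\|v\|_{\infty})},\qquad \underline u(x,y):=\frac{g_{m}\,a_{m}}{a_{M}\,b_{M}}\,e^{\gamma'(x-\|v\|_{\infty})}
\]
are a smooth super- and sub-solution of~\eqref{T-eqn} (both in the interior, using $b/a\in[b_{m}/a_{M},\,b_{M}/a_{m}]$, and for the Robin-type condition on $\Gamma$), and both lie in $H^{1}_{\diese}(\Omega)$ because $\gamma,\gamma'>0$. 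A weak comparison principle---testing the inequality satisfied by $u_{L}-\overline u$ against $(u_{L}-\overline u)^{+}\in H^{1}_{\diese}$, using the above energy identity to force $\nabla(u_{L}-\overline u)^{+}\equiv 0$, and observing that $L^{2}_{\diese}(\Omega)$ contains no nonzero constants---gives $\underline u\leq u_{L}\leq\overline u$. Extending $u_{L}$ by zero to $\Omega\setminus\Omega_{L}$ (which is licit thanks to the zero trace at $x=-L$), the uniform $L^{\infty}$ and $H^{1}$ bounds allow one to extract a subsequence converging weakly in $H^{1}_{\diese}(\Omega)$ to a limit $u$, which solves~\eqref{var-eqn} by passage to the limit on compactly supported test functions (see Remark~\ref{var-equiv-dis}) and inherits~\eqref{Linfty-esti}.

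Uniqueness follows by applying the same energy identity directly on $\Omega$: if $u_{1},u_{2}$ are two solutions, $w:=u_{1}-u_{2}$ solves the homogeneous variational problem, and testing with $w$ is licit since $w(\cdot,y)\in L^{2}$ for almost every $y$ makes the boundary term at $x=-\infty$ vanish along a suitable sequence $x_{n}\to-\infty$; this yields $\nabla w\equiv 0$ and hence $w\equiv 0$ (again using the absence of nonzero constants in $L^{2}_{\diese}$). The main technical obstacle throughout is the unbounded geometry of $\Omega_{\diese}$: one must justify carefully that all boundary terms at $x=-\infty$ vanish whenever an integration by parts is performed, and that the explicit exponential barriers actually belong to $H^{1}_{\diese}(\Omega)$; both points hinge on having $\gamma,\gamma'>0$ and on the averaged decay at $-\infty$ implied by $L^{2}_{\diese}$-integrability.
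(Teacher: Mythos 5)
Your route is genuinely different from the paper's: instead of truncating the domain at $x=-L$ and imposing a Dirichlet condition there, the paper (Appendix~\ref{app-T-well-posed}) keeps the unbounded domain and restores coercivity by adding a zeroth-order penalization $\alpha\int u_\alpha w$, then passes to the limit $\alpha\downarrow 0$; both schemes use exactly the same exponential barriers $C_1e^{C_2x}$ with $C_2=c\,b_m/a_M$ (resp.\ $c\,b_M/a_m$) and a weak comparison/nonnegativity argument, and both conclude by weak compactness. Your truncation buys an elementary Poincar\'e-based coercivity; the paper's penalization buys the convenience that the barriers compare with $u_\alpha$ on the \emph{whole} of $\Omega$, with no artificial boundary to worry about. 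Your energy-identity uniqueness argument (with the boundary term at $x=-\infty$ killed along a sequence $x_n\to-\infty$ by square-integrability) is a legitimate variant of the paper's comparison-based uniqueness.

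There is, however, one step that fails as written: the claim $\underline u\leq u_L$ on $\Omega_L$. At the artificial boundary $\{x=-L\}$ you have $u_L=0$ while $\underline u(-L,y)=\frac{g_m a_m}{a_M b_M}e^{\gamma'(-L-\|v\|_\infty)}>0$, so the asserted inequality is simply false near $x=-L$; correspondingly, the natural test function $(\underline u-u_L)^+$ does not vanish on $\{x=-L\}$, hence is not admissible in $V_L$, and the comparison argument you describe does not deliver the lower bound for the truncated solutions (only the upper barrier goes through, since $(u_L-\overline u)^+$ does vanish there). The gap is localized and fixable in either of two ways: (i) replace $\underline u$ by $\underline u_L:=\underline u-\frac{g_m a_m}{a_M b_M}e^{\gamma'(-L-\|v\|_\infty)}$, which is still a subsolution (the constant is annihilated by the operator and only helps in the Robin inequality), vanishes at $x=-L$, and converges to $\underline u$ as $L\to+\infty$, so the stated lower bound is recovered in the limit; or (ii) prove \eqref{Linfty-esti} from below directly for the limit $u$ on the full domain $\Omega$, testing with $(\underline u-u)^+$ and disposing of the boundary contribution at $x=-\infty$ by the same square-integrability argument you invoke for uniqueness (this is in essence the paper's nonnegativity lemma at $\alpha=0$). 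A second, minor point: on $\Omega_L$ the conclusion $(u_L-\overline u)^+\equiv 0$ should be drawn from the zero trace at $x=-L$, not from the absence of nonzero constants in $L^2_\diese(\Omega)$, which is the correct reason only on the unbounded domain.
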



The proof of the above theorem is standard and postponed in Appendix
\ref{app-T-well-posed} for completeness.

\subsection{Stability}

This section
is devoted to further properties that will be needed later. It deals with the passage to the limit in a sequence of problems of the form
\begin{equation}
\label{T-eqn-n}%
\begin{cases}
c_{n} \, b \, (u_{n})_{x}-\Div (a \, \nabla u_{n}) = 0, & \quad x<v_{n}(y),\\[1ex]
a \, \frac{\partial u_{n}}{\partial\nu} =\frac{c_{n} \, g}{\sqrt{1+(v_{n}%
)_{y}^{2}}}, & \quad x=v_{n}(y),\\[1ex]
u_{n}(x,y) \rightarrow0, & \quad\mbox{as~$x\rightarrow -\infty$,}
\end{cases}
\end{equation}
\begin{equation}
\label{front-eqn-n}-c_{n}+H_{n}(y) \sqrt{1+(v_{n})_{y}^{2}}=\mu\, \frac
{(v_{n})_{yy}}{1+(v_{n})_{y}^{2}}, \quad 
y \in\mathbb{R} .
\end{equation}

For a technical reason, we will need to fix the front's profile by assuming for instance that it has a zero mean value. This will be done without loss of generality, since the
solution of \eqref{front-eqn-n} is unique up to an additive constant. For brevity, we will denote throughout by $\overline{f}:=\frac{1}{Y} \int_0^Y f$ the mean value of any $Y$-periodic function $f:\R \to \R$.

Let us start with a compacity result.

\begin{lemma}
[Compactness]\label{compactness} Let us assume that for each $n \in \mathbb{N}$,
\begin{equation*}%
\begin{cases}
H_{n} \in L_{\diese}^{\infty}(\mathbb{R} ),\\
\mbox{$c_n>0$, $v_n \in W_\diese^{2,\infty}(\R)$,}\\
u_{n} \in H^{1}_{\diese}(\Omega_{n}),\\
\mbox{$v_n$ satisfies \eqref{front-eqn-n} almost everywhere with $\overline{v}_n=0$,}\\
\mbox{$u_n$ is a variational solution to \eqref{T-eqn-n},}
\end{cases}
\end{equation*}
where $\Omega_{n}:=\{x<v_{n}(y)\}$. Then, if
\begin{equation}
\label{sufficient-cond}0<\inf_{n} \essinf_{y} H_{n}(y) \leq\sup_{n}
\esssup_{y} H_{n}(y) <+\infty,
\end{equation}
there exists $(H,c,v,u) \in L^{\infty}_{\diese}(\mathbb{R} )
\times\mathbb{R} _{+} \times W^{2,\infty}_{\diese}(\mathbb{R} )
\times H_{\diese}^{1}(\mathbb{R} ^{2})$ such that
\begin{equation}
\label{weak-conv}%
\begin{cases}
\mbox{$H_n \rightharpoonup H$ in $L^\infty(\R)$ weak-$\star$,}\\
\mbox{$c_n \to c$, $v_n \rightharpoonup v$ in $W^{2,\infty}(\R)$
weak-$\star$,}\\
\mbox{$u_n \rightharpoonup u$ weakly in $H^1_\diese(\R^2)$},
\end{cases}
\end{equation}
up to some subsequence.
\end{lemma}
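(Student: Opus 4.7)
My plan is to extract convergent subsequences successively --- first for $H_n$, then for $(c_n,v_n)$, and finally for $u_n$ --- relying on the \emph{a priori} estimates supplied by Theorems \ref{front-well-posed} and \ref{T-well-posed}. The pivotal point is that \eqref{sufficient-cond} delivers uniform $L^\infty$ bounds on the source terms $H_n$, and every other uniform bound used below will cascade from these.

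For $H_n$, the assumption \eqref{sufficient-cond} confines the sequence to a norm-bounded subset of $L^\infty_{\diese}(\R)$, so Banach--Alaoglu yields a weak-$\star$ convergent subsequence with limit $H \in L^\infty(\R)$. Periodicity of $H$ passes to the limit by testing against $Y$-periodic $L^1$ functions.

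For $(c_n,v_n)$, Theorem \ref{front-well-posed} applied to each $H_n$ gives $H_m \leq c_n \leq H_M$ and $\|(v_n)_y\|_\infty \leq \sqrt{H_M^2/H_m^2 - 1}$, with $H_m,H_M$ the uniform constants from \eqref{sufficient-cond}. The normalization $\overline{v}_n=0$ combined with this Lipschitz bound controls $\|v_n\|_\infty$ via a Poincar\'e-type inequality on the period cell. Solving \eqref{front-eqn-n} for $(v_n)_{yy}$ then produces a uniform $L^\infty$ bound on the second derivative, so that $(v_n)$ is bounded in $W^{2,\infty}_{\diese}(\R)$. Bolzano--Weierstrass provides $c_n \to c \in [H_m, H_M] \subset \R_+$, while Arzel\`a--Ascoli applied to the $C^{1,1}$-bounded family $(v_n)$ yields strong convergence $v_n \to v$ in $C^1_{\loc}$, together with $(v_n)_{yy} \rightharpoonup v_{yy}$ weak-$\star$ in $L^\infty$; these jointly express the desired weak-$\star$ convergence in $W^{2,\infty}(\R)$. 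Periodicity of $v$ survives the passage to the limit.

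For $u_n$ the main technical obstacle is that these functions live on the varying domains $\Omega_n$. The seminorm estimate \eqref{H1-esti}, combined with $c_n \leq H_M$, controls $|u_n|_{H^1_{\diese}(\Omega_n)}$ uniformly. The pointwise bound \eqref{Linfty-esti}, together with $\sup_n \|v_n\|_\infty < \infty$, dominates $u_n$ by an $n$-independent exponential that is square-integrable over $\Omega_{\diese}$, thereby controlling $\|u_n\|_{L^2_{\diese}(\Omega_n)}$. To move to a common Hilbert space we invoke the extension operator \eqref{extension}, whose operator norm in \eqref{ext-esti} depends only on $\|(v_n)_y\|_\infty$ and is thus uniform in $n$. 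Consequently $(\ext(u_n))$ is bounded in $H^1_{\diese}(\R^2)$, and one last application of Banach--Alaoglu extracts a weakly convergent subsequence with limit $u \in H^1_{\diese}(\R^2)$. A diagonal extraction merges the three subsequences, completing the proof.
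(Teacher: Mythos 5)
Your proposal is correct and follows essentially the same route as the paper: the uniform bounds from \eqref{sufficient-cond} are cascaded through Theorem \ref{front-well-posed}\eqref{front-esti}, the normalization $\overline{v}_n=0$, Equation \eqref{front-eqn-n}, Theorem \ref{T-well-posed}\eqref{H1-esti}--\eqref{Linfty-esti} and the extension estimate \eqref{ext-esti}, after which standard weak(-$\star$) compactness theorems conclude. The only difference is that you spell out explicitly which compactness theorem is invoked for each component, which the paper leaves as ``standard.''
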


\begin{remark}
\begin{enumerate}[label={{\rm (\roman*)}}]
\item The limit in \eqref{weak-conv} has to be understood for $u_{n}$ extended
to $\mathbb{R} ^{2}$ by reflection, see \eqref{extension}.
\item The convergence in \eqref{weak-conv} implies that $v_{n} \to v$ and
$u_{n} \to u$ strongly in $W^{1,\infty}(\mathbb{R} )$ and $L^{2}%
_{\diese}(\mathbb{R} ^{2}) $, respectively.
\end{enumerate}
\end{remark}

\begin{proof}
In this proof, the letter $C$ denotes various constants independent of $n$. By
\eqref{sufficient-cond} and Theorem \ref{front-well-posed}\eqref{front-esti},
$\{c_{n}\}_{n}$ is bounded and $\|(v_{n})_{y}\|_{\infty}\leq C$. This leads to $\|v_n\|_\infty \leq C$ because $\overline{v}_n=0$. Moreover, $\|(v_{n})_{yy}\|_{\infty}\leq C$ by \eqref{front-eqn-n}. By Theorem~\ref{T-well-posed}\eqref{H1-esti} and \eqref{Linfty-esti}, we deduce that$\|u_{n}%
\|_{H^{1}_{\diese}(\Omega_{n})} \leq C$ and, after extending
$u_{n}$ to $\mathbb{R} ^{2}$, $\|u_{n}\|_{H^{1}_{\diese}(\mathbb{R} ^{2})} \leq C $ by \eqref{ext-esti}. The proof is complete by
standard weak compactness theorems.
\end{proof}


Let us now identify the limiting problem.

\begin{lemma}[Stability]\label{stability} Let $(H,c,v,u)$ be given by Lemma
\ref{compactness}. Then $(c,v,u)$ is a solution of \eqref{T-eqn} and
\eqref{frozen-front-eqn}, that is
\begin{equation*}%
\begin{cases}
\mbox{$v$ satisfies \eqref{frozen-front-eqn} almost everywhere,}\\
\mbox{and $u_{|_{\Omega}}$ is a variational solution to \eqref{T-eqn}.}
\end{cases}
\end{equation*}

\end{lemma}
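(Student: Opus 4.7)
The plan is to pass to the limit, one term at a time, in the two equations satisfied by $(c_n,v_n,u_n)$, exploiting the fact that Lemma~\ref{compactness} (together with its remark) supplies \emph{strong} convergences $v_n \to v$ in $W^{1,\infty}(\R)$ and $u_n \to u$ in $L^2_{\diese}(\R^2)$ on top of the weak-$\star$ convergences in \eqref{weak-conv}. I would treat the front equation first (which is one-dimensional) and then attack the harder temperature equation.

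For the front equation, I would test \eqref{front-eqn-n} against an arbitrary $\varphi \in C_c^\infty(\R)$. The linear term $-c_n\int\varphi$ trivially converges. For $\int_\R H_n \sqrt{1+(v_n)_y^2}\,\varphi\,dy$, the uniform convergence $(v_n)_y \to v_y$ yields $\sqrt{1+(v_n)_y^2}\,\varphi \to \sqrt{1+v_y^2}\,\varphi$ in $L^1(\R)$, which paired with $H_n \rightharpoonup H$ weak-$\star$ in $L^\infty$ gives the desired limit. Analogously, $\frac{\varphi}{1+(v_n)_y^2} \to \frac{\varphi}{1+v_y^2}$ in $L^1(\R)$ while $(v_n)_{yy} \rightharpoonup v_{yy}$ weak-$\star$ in $L^\infty$, so the curvature term passes to the limit as well. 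Since $\varphi$ is arbitrary and every term is in $L^\infty(\R)$, \eqref{frozen-front-eqn} holds almost everywhere; $Y$-periodicity of $v$ is clearly preserved under $W^{1,\infty}$-convergence.

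For the temperature equation, I would work with the distributional formulation \eqref{dis-eqn} of Remark~\ref{var-equiv-dis}. Given $\varphi \in C_c^1(\R^2)$, I would rewrite the bulk integral over $\Omega_n$ as one over $\R^2$ against the indicator $\mathbb{1}_{\Omega_n}$ (using the reflection extension of $u_n$). The key observation is that the uniform convergence $v_n \to v$ yields $\mathbb{1}_{\Omega_n} \to \mathbb{1}_\Omega$ almost everywhere, so by dominated convergence the multipliers $c_n\,b\,\mathbb{1}_{\Omega_n}\,\varphi$ and $a\,\mathbb{1}_{\Omega_n}\,\nabla\varphi$ converge strongly in $L^2(\R^2)$ on a bounded set enclosing $\supp\varphi$. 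Pairing these with the weak $L^2$-convergence of $(u_n)_x$ and $\nabla u_n$ gives the limit of the bulk integral. For the surface term I would parametrize $\Gamma_n$ as $y \mapsto (v_n(y),y)$ and write
$$\int_{\Gamma_n} \frac{c_n\,g}{\sqrt{1+(v_n)_y^2}}\,\varphi = \int_\R c_n\,g(y)\,\varphi(v_n(y),y)\,dy;$$
the integrand converges pointwise (by uniform convergence of $v_n$ and continuity of $\varphi$) and is dominated by a compactly supported bound, so ordinary dominated convergence delivers the limit $\int_\Gamma \frac{c\,g}{\sqrt{1+v_y^2}}\,\varphi$.

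The main obstacle is the moving domain $\Omega_n$: weak convergence of $u_n$ does not cooperate directly with integrals over a domain that depends on $n$. The rescue is the strong $W^{1,\infty}$-convergence of $v_n$, which turns the symmetric difference $\Omega_n \triangle \Omega$ into a vanishing set and reduces everything to a standard weak/strong $L^2$-pairing. No extra verification is needed for $u_{|_\Omega} \in H^1_{\diese}(\Omega)$, which follows at once from $u \in H^1_{\diese}(\R^2)$, and the decay of $u$ at $x = -\infty$ propagates to the limit through the $n$-uniform exponential bound~\eqref{Linfty-esti}.
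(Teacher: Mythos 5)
Your proposal is correct and follows essentially the same route as the paper: weak-$\star$/uniform pairing for the front equation, and for the temperature equation the rewriting with $\mathbf{1}_{\Omega_n}$, the almost-everywhere convergence $\mathbf{1}_{\Omega_n}\to\mathbf{1}_{\Omega}$ (the graph $\Gamma$ being Lebesgue-negligible), dominated convergence for the multipliers, weak $L^2$ convergence of the gradients, and uniform convergence of $v_n$ for the boundary term. The only cosmetic difference is the order of the two equations and the (harmless, unneeded) remark about the decay at $x=-\infty$, which is not part of the variational formulation anyway.
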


\begin{proof}
Given $\varphi \in C^1_{c}(\mathbb{R} ^{2})$, we have
\begin{equation*}
\int_{\Omega_{n}} \left( c_{n} \, b \, (u_{n})_{x} \, \varphi+ a \, \nabla
u_{n} \, \nabla\varphi\right) =\int_{\mathbb{R}} c_{n} \, g(y) \,
\varphi(v_{n}(y),y) \, \mathrm{d} y,
\end{equation*}
that is
\begin{equation}
\label{dis-eqn-n}\int_{\mathbb{R} ^{2}} \left( c_{n} \, (u_{n})_{x} \, b \,
\varphi\, \mathbf{1}_{\Omega_{n}} + a \, \mathbf{1}_{\Omega_{n}} \nabla
\varphi\, \nabla u_{n} \right) =\int_{\mathbb{R}} c_{n} \, g(y) \,
\varphi(v_{n}(y),y) \, \mathrm{d} y.
\end{equation}
Let us pass to the limit in \eqref{dis-eqn-n} (along the subsequence given by Lemma \ref{compactness}). 

We claim that $\mathbf{1}_{\Omega_{n}} \to\mathbf{1}_{\Omega}$ almost everywhere on
$\mathbb{R} ^{2}$. Indeed, for all~$x \neq v(y)$, we have $x \neq v_{n}(y)$
whenever~$n$ is sufficiently large, since~$v_{n}(y) \to v(y)$. This shows that
the convergence holds for all~$(x,y) \notin\Gamma=\left\{ x=v(y)\right\} $.
This proves our claim, since the two-dimensional Lebesgue measure of this
Lipschitz graph is zero.

We deduce that $b \, \varphi\, \mathbf{1}_{\Omega_{n}} \to b \, \varphi\,
\mathbf{1}_{\Omega}$ and~$a \, \mathbf{1}_{\Omega_{n}} \nabla\varphi\to a \,
\mathbf{1}_{\Omega} \nabla\varphi$ strongly in~$L^{2}(\mathbb{R} ^{2})$, by
the dominated convergence theorem. Moreover, $\nabla u_{n} \rightharpoonup
\nabla u$ weakly in $\left(L_{\diese}^{2}(\mathbb{R} ^{2})\right) ^{2}$ since $u_{n}
\rightharpoonup u$ weakly in $H_{\diese}^{1}(\mathbb{R} ^{2})$.
It is then standard to pass to the limit in \eqref{dis-eqn-n} and deduce
that
\begin{equation*}
\int_{\Omega} \left( c \, b \, u_{x} \, \varphi+ a \, \nabla u \,
\nabla\varphi\right) =\int_{\Gamma} \frac{c \, g}{\sqrt{1+v_{y}^{2}}} \,
\varphi,
\end{equation*}
for all $\varphi\in C^1_{c}(\mathbb{R} ^{2})$. To pass to the limit in
the boundary term, we have simply used the uniform convergence of $v_{n}$
towards $v $. This proves that $u_{|_{\Omega}}$ is a variational solution to \eqref{T-eqn}.

To pass to the limit in \eqref{front-eqn-n}, we consider $\varphi\in
C_{c}(\mathbb{R} )$ and write that
\begin{equation*}
\int_{\mathbb{R}} \left( -c_{n} +H_{n} \sqrt{1+(v_{n})_{y}^{2}}\right)
\varphi=\mu\int_{\mathbb{R}} (v_{n})_{yy} \, \frac{\varphi}{1+(v_{n})_{y}^{2}%
}.
\end{equation*}
Since $(v_{n})_{y} \to v_{y}$ uniformly, the $L^{\infty}(\mathbb{R} )$
weak-$\star$ convergences of $H_n$ and $(v_{n})_{yy}$ are sufficient to
pass to the limit. We get
\begin{equation*}
\int_{\mathbb{R}} \left( -c +H \sqrt{1+v_{y}^{2}} \right)  \varphi=\mu
\int_{\mathbb{R}} \frac{v_{yy}}{1+v_{y}^{2}} \, \varphi
\end{equation*}
for all $\varphi\in C_{c}(\mathbb{R} )$, which completes the proof.
\end{proof}

Here is a last stability result for the temperature at the front.

\begin{lemma}
[Strong convergence of the traces]\label{trace-conv} Let $(H,c,v,u)$ be as in the preceding lemmas. Then 
$
u_{n}(v_{n}(y),y) \to u(v(y),y) 
$
for almost every
$y \in\mathbb{R} $ (up to the subsequence considered in the preceding lemmas or one of its subsubsequences). 
\end{lemma}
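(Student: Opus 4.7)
The plan is to straighten the fronts by a change of variables moving both $\Gamma_n$ and $\Gamma$ onto the flat line $\{x=0\}$, and then exploit the compactness of the periodic embedding $H^{1/2}(0,Y)\hookrightarrow L^2(0,Y)$. Concretely, I set $\Phi_n(x,y):=(x+v_n(y),y)$ and $\Phi(x,y):=(x+v(y),y)$, and define $\tilde u_n:=u_n\circ\Phi_n$, $\tilde u:=u\circ\Phi$, where $u_n$ and $u$ are understood as their extensions to $\R^2$ by reflection. These are $Y$-periodic in $y$ and satisfy
\[
\tilde u_n(0,y)=u_n(v_n(y),y),\qquad \tilde u(0,y)=u(v(y),y),
\]
so the claim reduces to proving $\tilde u_n(0,\cdot)\to \tilde u(0,\cdot)$ almost everywhere on $\R$, up to a subsequence.

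Since $\Phi_n$ has Jacobian $1$, the chain rule combined with the bounds from Lemma \ref{compactness} yields
\[
\|\tilde u_n\|_{H^1_{\diese}(\R^2)}\le \bigl(1+\|(v_n)_y\|_\infty\bigr)\,\|u_n\|_{H^1_{\diese}(\R^2)}\le C,
\]
so, up to extracting a further subsequence, $\tilde u_n\rightharpoonup \tilde w$ weakly in $H^1_{\diese}(\R^2)$. To identify $\tilde w$ with $\tilde u$, I would test against an arbitrary $\varphi\in C_c(\R^2)$ and use the change of variable $x'=x+v_n(y)$ to rewrite
\[
\iint \tilde u_n(x,y)\,\varphi(x,y)\,dx\,dy=\iint u_n(x',y)\,\varphi(x'-v_n(y),y)\,dx'\,dy,
\]
and then pass to the limit by combining the strong $L^2$-convergence of $u_n$ on bounded sets (from the remark following Lemma \ref{compactness}) with the uniform convergence $v_n\to v$ and the uniform continuity of $\varphi$.

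It then remains to take traces on the flat line $\{x=0\}$: the continuity of the trace operator $H^1_{\diese}(\R^2)\to H^{1/2}_{\diese}(\R)$ gives
\[
\tilde u_n(0,\cdot)\rightharpoonup \tilde u(0,\cdot)\quad\text{weakly in }H^{1/2}_{\diese}(\R),
\]
and the compact Sobolev embedding $H^{1/2}(0,Y)\hookrightarrow L^2(0,Y)$ upgrades this to strong convergence in $L^2_{\diese}(\R)$. A further subsequence extraction then yields a.e.\ convergence on $(0,Y)$, hence on $\R$ by periodicity, which is the desired conclusion.

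The main technical point I anticipate is the identification $\tilde w=\tilde u$ in the second step: because the straightening diffeomorphisms $\Phi_n$ themselves vary with $n$, one cannot directly invoke continuity of composition under weak convergence. The argument sketched above circumvents this obstacle by transferring the change of variable onto the test function, where the volume-preserving property of $\Phi_n$ and the uniform convergence $v_n\to v$ make the passage to the limit elementary.
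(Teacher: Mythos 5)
Your proof is correct, but it follows a genuinely different route from the paper's. You straighten the moving fronts by the volume-preserving maps $\Phi_n(x,y)=(x+v_n(y),y)$, identify the weak $H^1_{\diese}$-limit of the straightened functions $\tilde u_n=u_n\circ\Phi_n$ by shifting the change of variables onto the test function (where the strong $L^2_{\loc}$ convergence of $u_n$ and the uniform convergence $v_n\to v$ suffice), and then obtain the trace convergence softly: linearity and continuity of the flat trace $H^1_{\diese}(\mathbb{R}^2)\to H^{\frac12}_{\diese}(\mathbb{R})$ give weak convergence of $\tilde u_n(0,\cdot)$, and the compact embedding $H^{\frac12}_{\diese}\hookrightarrow L^2_{\diese}$ upgrades it to strong and, along a subsequence, a.e.\ convergence. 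The paper instead works directly on the moving fronts: it bounds $w_n(y)=u_n(v_n(y),y)$ in $H^{\frac12}_{\diese}(\mathbb{R})$ via the trace estimate with constant $C(Y,\|(v_n)_y\|_\infty)$, extracts an a.e.\ limit $\tilde w$ by the same compact embedding, and identifies $\tilde w(y)=u(v(y),y)$ through a Gauss--Green identity, $\int_{\mathbb{R}}w_n\,\varphi(v_n(y),y)\,\mathrm{d}y=\int_{\mathbb{R}^2}(\mathbf{1}_{\Omega_n}\varphi\,(u_n)_x+\mathbf{1}_{\Omega_n}\varphi_x\,u_n)$, passing to the limit with $\mathbf{1}_{\Omega_n}\to\mathbf{1}_{\Omega}$ and then choosing $\varphi(x,y)=\theta(x)\,\psi(y)$ with $\theta\equiv 1$ near the front. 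Your approach buys a cleaner identification (it happens once, at the $H^1$ level, and the trace statement follows by functional-analytic generalities), at the price of invoking the stability of $H^1$ under composition with the bi-Lipschitz maps $\Phi_n$ and the compatibility of this composition with traces, two routine but unstated verifications; the paper's argument stays entirely within the variational framework already set up (indicator functions, Gauss--Green), avoiding composition operators altogether. Both deliver exactly the subsequential a.e.\ convergence claimed in the lemma.
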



\begin{proof}
Let $w_{n}(y):=u_{n}(v_{n}(y),y)$ and recall that
\begin{equation*}
\| w_{n}\|_{H_{\diese}^{\frac{1}{2}}(\mathbb{R})}\leq
C\left(  Y,\|(v_{n})_{y}\|_{\infty}\right)  \,\| u_{n}\|
_{H_{\diese}^{1}(\Omega_{n})},
\end{equation*}
as noticed in Section \ref{Sobolev-preliminaries}. By the bounds of
the proof of Lemma \ref{compactness}, it follows that $\{w_{n}\}_{n}$ is bounded
in $H_{{\diese}}^{\frac{1}{2}}(\mathbb{R})$. Hence, the compact
embedding of $H_{\diese}^{\frac{1}{2}}(\mathbb{R})$ into
$L_{{\diese}}^{2}(\mathbb{R})$ implies that $w_{n}$ converges to
some $\tilde{w}$ strongly in $L_{{\diese}}^{2}(\mathbb{R})$
and almost everywhere, up to some subsequence (chosen as stated in the lemma). It remains to show that $\tilde{w}(y)=u(v(y),y)$ almost everywhere.  
By the Gauss-Green formula, we have for any $\varphi\in C_{c}^{1}(\mathbb{R}^{2})$,
\begin{multline*}
\int_{\mathbb{R}}w_{n}(y)\,\varphi(v_{n}(y),y)\,\mathrm{d}y=\int_{\Omega_{n}%
}(u_{n}\,\varphi)_{x}\,\mathrm{d}x\,\mathrm{d}y\\
=\int_{\mathbb{R}^{2}}\left(  \mathbf{1}_{\Omega_{n}}\,\varphi\,(u_{n}%
)_{x}+\mathbf{1}_{\Omega_{n}}\,\varphi_{x}\,u_{n}\right)  \mathrm{d}%
x\,\mathrm{d}y.
\end{multline*}
Arguing as in the proof of Lemma \ref{stability}, $\mathbf{1}_{\Omega_{n}%
}\rightarrow\mathbf{1}_{\Omega}$ almost everywhere and passing to the limit in the above equation implies that 
\begin{multline*}
\int_{\mathbb{R}} \tilde{w}(y)\,\varphi(v(y),y)\,\mathrm{d}y=\int_{\mathbb{R}^2} \left(  \mathbf{1}_{\Omega}\,\varphi\,u_{x}+\mathbf{1}_{\Omega}\,\varphi_{x}\,u\right)\, \dif x \, \mathrm{d}y\\
=\int_{\Omega} (u \, \varphi)_x \, \dif x \, \dif y=\int
_{\mathbb{R}}u(v(y),y)\,\varphi(v(y),y)\,\mathrm{d}y.
\end{multline*}
We complete the proof by taking test functions of the form
$\varphi(x,y)=\theta(x)\,\psi(y)$, with $\theta(x)=1$ when $|x| \leq \| v\|_{\infty}$.
\end{proof}

\section{Existence of a travelling wave solution}\label{sec-existence}

Let us now look for a solution to \eqref{T-eqn}--\eqref{front-eqn}. We continue to use the notations of the preceding section. In particular, we still do not specify the dependence in $v$ of the sets $\Omega=\{x<v(y)\}$ and $\Gamma
=\{x=v(y)\}$ to simplify. Moreover $Y>0$ is a given fixed period and $\mu>0$ a given fixed coefficient all along this section too.  

\begin{definition}[Travelling wave solution] Assume
\ref{hyp-param-1}--\ref{hyp-arrh-4}. A triplet $(c,v,u)$ is said to be a
travelling wave solution to \eqref{free-bdry-sys}--\eqref{bdry-cond} if

\begin{enumerate}[label={{\rm (\roman*)}}]
\item $c\in\mathbb{R}$, $v\in W_{\diese}^{2,\infty}(\mathbb{R})$,

\item $u\in H_{{\diese}}^{1}(\Omega)$, $u\geq0$,

\item $u$ is a variational solution to \eqref{T-eqn} and $v$ satisfies
\eqref{front-eqn} almost everyhwere.
\end{enumerate}
\end{definition}

We first deal with the case where in addition to \ref{hyp-arrh-1}--\ref{hyp-arrh-4} the
parameter $R$ is also bounded from below by some positive constant $R_m$, that is to say:
\begin{equation}
R(y,T) \geq R_{m}>0\quad \mbox{for almost each $y \in \R$ and all $T>0$.}\label{hyp-bounded-below}%
\end{equation}
We will next discuss the more general case where $R$ can go to zero.

\subsection{The case of nondegenerate $R$}

\begin{theorem}
\label{K-bounded-below} Assume~\ref{hyp-param-1}--\ref{hyp-arrh-4} and
\eqref{hyp-bounded-below}. Then there exists a travelling wave solution
$(c,v,u)$ to \eqref{free-bdry-sys}--\eqref{bdry-cond} with a positive speed $c
$.
\end{theorem}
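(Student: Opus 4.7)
The strategy is to apply Schauder's fixed point theorem to the map $\Phi$ obtained by cycling through the two frozen subproblems of Section \ref{sec-prelim}. Given a ``trace profile'' $w:\R \to \R^+$, the candidate for the value of the temperature along the front, I would set $H(y):=R(y,w(y))$, invoke Theorem \ref{front-well-posed} to get the unique $(c,v)$ solving \eqref{frozen-front-eqn} (normalized by $\overline{v}=0$), then invoke Theorem \ref{T-well-posed} to get the unique variational solution $u$ to \eqref{T-eqn}, and finally define
$$
\Phi(w)(y):=u(v(y),y).
$$
A fixed point then yields a triplet $(c,v,u)$ automatically satisfying both \eqref{T-eqn} and \eqref{front-eqn}, with $c\geq R_m>0$.

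For the invariant set, I would use the nondegeneracy \eqref{hyp-bounded-below} together with \ref{hyp-arrh-3} to get $R_m\leq H\leq R_M$ independently of $w$. Then Theorem \ref{front-well-posed}\eqref{front-esti} supplies the uniform bounds $R_m\leq c\leq R_M$ and $\|v_y\|_\infty\leq \sqrt{R_M^2/R_m^2-1}$, which combined with the zero-mean normalization produces a uniform $L^\infty$ bound on $v$. Plugging into \eqref{Linfty-esti} produces explicit constants $0<w_m^{\star}\leq w_M^{\star}$ (depending only on the fixed parameters and on $Y,\mu$) such that $w_m^{\star}\leq \Phi(w)\leq w_M^{\star}$ almost everywhere. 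Hence the convex closed set
$$
K:=\left\{w\in L^2_{\diese}(\R)\;:\; w_m^{\star}\leq w\leq w_M^{\star}\ \text{a.e.}\right\}
$$
is stable under $\Phi$. Furthermore, \eqref{H1-esti} combined with the trace bound from Section \ref{Sobolev-preliminaries} gives a uniform $H^{1/2}_{\diese}(\R)$-bound on $\Phi(w)$ for $w\in K$, so that $\Phi(K)$ is relatively compact in $L^2_{\diese}(\R)$ by Sobolev embedding.

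The heart of the argument is the $L^2$-continuity of $\Phi$. Given $w_n\to w$ in $L^2_{\diese}(\R)$, extract an almost everywhere convergent subsequence. By \ref{hyp-arrh-1} the functions $H_n(y):=R(y,w_n(y))$ converge pointwise to $H(y):=R(y,w(y))$, while the uniform sandwich $R_m\leq H_n\leq R_M$ ensures hypothesis \eqref{sufficient-cond}. Lemma \ref{compactness} then extracts a sub-subsequence along which $(c_{n'},v_{n'},u_{n'})$ converges, Lemma \ref{stability} identifies the limit as the triplet produced by $\Phi$ on $w$, and Lemma \ref{trace-conv} gives $\Phi(w_{n'})\to\Phi(w)$ almost everywhere. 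The uniform pointwise bound by $w_M^{\star}$ upgrades this to $L^2$ convergence via dominated convergence, and uniqueness of $c$ and of the trace $u(v(\cdot),\cdot)$ forces the whole sequence to converge to $\Phi(w)$.

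The main technical obstacle is exactly this continuity step: the profile $v$ is only determined up to an additive constant by Theorem \ref{front-well-posed}, so without care the sequence $v_{n'}$ could drift. Imposing $\overline{v}_n=0$ throughout is what renders the limit unique and lets the stability machinery produce a bona fide continuity statement for $\Phi$. Once continuity is secured, Schauder's theorem applied to $\Phi:K\to K$ yields a fixed point $w\in K$, and the associated triplet $(c,v,u)$ satisfies all three items of the definition of a travelling wave solution, with $c\geq R_m>0$.
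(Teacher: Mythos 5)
Your proof is correct, and it reaches the same fixed point by a slightly different functional-analytic packaging than the paper. The paper iterates on the frozen combustion rate: it takes $\mathcal{C}=\{H\in L^\infty_\diese(\R): R_m\le H\le R_M\}$ with the $L^\infty$ weak-$\star$ topology, so compactness of the set is free (Banach--Alaoglu) and Schauder--Tikhonov applies, the price being a non-metrizable topology handled via metrizability on bounded sets; the map is $H\mapsto R(\cdot,u(v(\cdot),\cdot))$. You instead iterate on the temperature trace $w\mapsto u(v(\cdot),\cdot)$ in the strong $L^2_\diese$ topology, building the invariant order interval $K$ from \eqref{front-esti}, the zero-mean normalization and \eqref{Linfty-esti} (this is precisely where \eqref{hyp-bounded-below} enters, as in the paper), and obtaining relative compactness of $\Phi(K)$ from the uniform $H^{1/2}_\diese$ trace bound plus the compact embedding into $L^2_\diese$ --- an ingredient the paper itself uses inside Lemma \ref{trace-conv} --- so that the classical Schauder theorem applies. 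The continuity argument is the same in both proofs: Lemmas \ref{compactness}, \ref{stability} and \ref{trace-conv}, with the subsequence-uniqueness trick to upgrade to convergence of the whole sequence, and the normalization $\overline{v}=0$ playing exactly the role you identify. Two small points you should make explicit: (a) the measurability/well-definedness of $y\mapsto R(y,w(y))$ (Carath\'eodory, via \ref{hyp-arrh-1}--\ref{hyp-arrh-2}), which the paper also only sketches; and (b) in the continuity step, the identification of the weak-$\star$ limit of $H_n=R(\cdot,w_n(\cdot))$ as $R(\cdot,w(\cdot))$, which follows from the almost everywhere convergence together with the uniform bound $R_m\le H_n\le R_M$ (so that the limit furnished by Lemma \ref{compactness} is the right one before invoking uniqueness of the associated triplet). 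With these remarks your argument is complete and yields $c\ge R_m>0$ exactly as in the paper.
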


\begin{proof}
The idea is to look for a fixed point of some appropriate function
$\Phi:\mathcal{C} \to\mathcal{C}$. 

Let us first choose the set
\begin{equation*}
\mathcal{C}:= \left\{ H \in L^{\infty}_{\diese}(\mathbb{R}
):R_{m} \leq H \leq R_{M} \right\} .
\end{equation*}
We will use the Schauder-Tikhonov's fixed point theorem thus needing this set to be convex and compact. It is clearly convex and to get the compacity we simply consider the $L^{\infty}(\mathbb{R} )$ weak-$\star$
topology on $\mathcal{C}$. 

Let us now choose $\Phi$. Given~$H \in\mathcal{C}$, we can apply successively
Theorems~\ref{front-well-posed} and~\ref{T-well-posed}. We find that there
exists a triplet~$(c,v,u)$ solution of~\eqref{T-eqn}
and~\eqref{frozen-front-eqn}. This triplet, which of course depends on $H$, is
unique under the additional condition that
$
\overline{v}=0.
$
We can then define the function
\begin{equation*}
\begin{array}[c]{rccl}
\Phi: & \mathcal{C} & \to & \mathcal{C}\\
& H & \mapsto & \Phi(H):y \mapsto R(y,u(v(y),y)).
\end{array}
\end{equation*}
Now it only remains to
show that $\Phi$ is continuous. Note that, rigourously, we should also verify that $\Phi$ is well-defined. This means that two arbitrary almost everywhere representatives of $H$ should give us two almost everywhere equal measurable functions $\Phi(H)$. This is in fact quite standard because $R$ is a Carath\'eodory function by \ref{hyp-arrh-1}--\ref{hyp-arrh-2}. The detailed verification of the well-definition of $\Phi$ is thus left to the reader.  

Let us then continue by proving that $\Phi$ is continuous for the $L^\infty(\R)$ weak-$\star$ topology. We can argue
with sequences because this topology is
metrizable on bounded sets (such as $\mathcal{C}$). Let thus $\mathcal{C} \ni H_{n} \rightharpoonup H$ in~$L^{\infty}(\mathbb{R} )$
weak-$\star$. By the construction above, 
\begin{equation*}
0<R_{m} \leq\inf_{n} \essinf H_{n}(y) \leq\sup_{n} \esssup
H_{n}(y) \leq R_{M} <+\infty.
\end{equation*}
We can then apply Lemma \ref{compactness}. This compacity result implies that $(H_{n},c_{n},v_{n},u_{n})$ converges to some $(\tilde{H},c,v,u)$ in the sense of \eqref{weak-conv} (and up to some subsequence). Note that 
$(c_{n},v_{n},u_{n})$ is the unique triplet associated to $H_{n}$ as
above.
Note also that $\tilde{H}=H$ by uniqueness of the limit of $H_n$. Moreover, the triplet $(c,v,u)$
satisfies \eqref{T-eqn} and \eqref{frozen-front-eqn} with our given $H$, thanks to the stability result of Lemma \ref{stability}. Applying then successively Lemma \ref{trace-conv}, \ref{hyp-arrh-1}, and the relative compactness of $\Phi(\mathcal{C}) \subseteq \mathcal{C}$, we first deduce that $\Phi(H_{n})
\to\Phi(H)$ almost everywhere and next in $L^\infty(\R)$ weak-$\star$ (up to  another subsequence if necessary). We have thus proved that $\Phi(H_{n}) \rightharpoonup\Phi(H)$ in $L^{\infty
}(\mathbb{R} )$ weak-$\star$ up to some subsequence. To conclude the
convergence of the whole sequence, we apply this reasoning by starting from any arbitrary subsequence of
$H_{n}$. We deduce that $\Phi$ is continuous, since the limit of the obtained converging subsubsequence is always the
same, that is $\Phi(H)$.

Finally the Schauder-Tikhonov's theorem gives us a fixed point $\Phi(H)=H$,
whose associated triplet $(c,v,u)$ is a travelling wave solution. Since $R$ is
assumed bounded from below by $R_{m}>0$, the positivity of $c$ is ensured by
Theorem \ref{front-well-posed}.
\end{proof}

\subsection{More general $R$}

Now we want to deal with the case where $R$ may go to
zero at $T=0$. For technical reasons, we will restrict to parameters with the
following prescribed behavior at zero:
\begin{equation}
\label{hyp-lim}\lim_{T \downarrow0}  \left\{|\ln T| \, \essinf_{y \in \R} R(y,T)\right\}=+\infty.
\end{equation}
This is for instance the case if $\essinf_{y} R(y,T) \sim \frac{C}{|\ln T|^\alpha}$ as $T \downarrow 0$ for some $\alpha \in (0,1)$ and some $C>0$. 

We start by giving a result which establishes an a priori positive lower bound
for the temperature at the front.

\begin{lemma}
\label{lower-bound} Assume \ref{hyp-param-1}--\ref{hyp-arrh-4} and let
$(c,v,u)$ be a travelling wave solution to
\eqref{free-bdry-sys}--\eqref{bdry-cond} with a positive speed $c$. If in addition \eqref{hyp-lim} holds true, then
\begin{equation*}
u \geq \min\{T > 0: \ln (T)  \essinf_y R(y,T) \geq-C\, (1+Y)\}>0
\end{equation*}
almost everywhere on $\Gamma$, for some constant $C=C(a_{m},g_{m},a_{M},b_{M},R_{M}) \geq0$. 
\end{lemma}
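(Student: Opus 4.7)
The plan is to establish a self-consistent inequality for the quantity
$$T_\star := \essinf_{y \in \R} u(v(y),y),$$
of the form $T_\star \geq \alpha \exp\!\left(-\tfrac{\beta\, Y}{\essinf_{y} R(y,T_\star)}\right)$ for explicit constants $\alpha,\beta>0$ depending only on the parameters $a_m,g_m,a_M,b_M,R_M$. Taking logarithms and rearranging will yield $\ln(T_\star)\,\essinf_{y} R(y,T_\star) \geq -C(1+Y)$, so that $T_\star$ itself belongs to the set in the statement. Combined with hypothesis \eqref{hyp-lim}, this forces the set to be bounded below by a positive constant.

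To obtain this self-consistent inequality I will combine three ingredients. First, evaluating the lower bound \eqref{Linfty-esti} from Theorem~\ref{T-well-posed} at $x=v(y)$ gives
$$u(v(y),y) \geq \frac{g_m a_m}{a_M b_M}\exp\!\left(-\,2c\,\frac{b_M}{a_m}\,\|v\|_\infty\right) \quad \mbox{a.e. on } \Gamma.$$
Second, setting $H(y):=R(y,u(v(y),y))$, the front equation \eqref{front-eqn} falls into the framework of Theorem~\ref{front-well-posed} with $H_M\leq R_M$ by \ref{hyp-arrh-3}, giving
$$c \leq R_M, \qquad \|v_y\|_\infty \leq \sqrt{R_M^2/H_m^2 - 1}\leq R_M/H_m,$$
where $H_m = \essinf_{y} R(y,u(v(y),y))$. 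Third, since $\overline{v}=0$ and $v$ is continuous and $Y$-periodic, there exists $y_0 \in \R$ with $v(y_0)=0$, and integrating $v_y$ from $y_0$ produces $\|v\|_\infty \leq Y\,\|v_y\|_\infty$.

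The monotonicity of $R$ in its second argument (\ref{hyp-arrh-1}) closes the loop: since $u(v(y),y)\geq T_\star$ a.e., we have $R(y,u(v(y),y)) \geq R(y,T_\star)$ a.e., and taking $\essinf_y$ gives $H_m \geq \essinf_{y} R(y,T_\star)$. Combining everything produces
$$c\,\|v\|_\infty \;\leq\; \frac{R_M^2\, Y}{\essinf_{y} R(y,T_\star)},$$
which substituted back into the temperature bound yields
$$T_\star \geq \frac{g_m a_m}{a_M b_M}\,\exp\!\left(-\frac{2b_M R_M^2\, Y}{a_m\,\essinf_{y} R(y,T_\star)}\right).$$
Taking logarithms, multiplying by $\essinf_{y}R(y,T_\star)\leq R_M$ and absorbing the resulting data-dependent constants into a single $C=C(a_m,g_m,a_M,b_M,R_M)$ gives $\ln(T_\star)\,\essinf_{y}R(y,T_\star) \geq -C(1+Y)$, which is the claim (applied to $T_\star$, whence to any $u(v(y),y)\geq T_\star$).

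The main obstacle, and the reason the result is nontrivial, is the self-consistent nature of the estimate. A naïve attempt to bound $\|v\|_\infty$ by constants depending only on the data fails because $\|v_y\|_\infty \leq R_M/H_m$ blows up as $H_m\downarrow 0$, and a priori $H_m$ can degenerate with the temperature. The resolution is precisely the feedback loop via the monotonicity of $R$: the blow-up of $\|v\|_\infty$ is controlled by a quantity ($\essinf_y R(y,T_\star)$) that is in turn governed by $T_\star$, producing a single implicit inequality in $T_\star$ whose solutions are, thanks to \eqref{hyp-lim}, bounded below away from zero.
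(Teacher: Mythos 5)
Your proposal is correct and follows essentially the same route as the paper's own proof: the lower bound \eqref{Linfty-esti} at the front, the estimates of Theorem \ref{front-well-posed} applied with $H(y)=R(y,u(v(y),y))$, the monotonicity \ref{hyp-arrh-1} to pass to $\essinf_y R(y,T_\star)$, the bound $\|v\|_\infty\leq Y\,\|v_y\|_\infty$, and then logarithms multiplied by $\essinf_y R(y,T_\star)\leq R_M$. The only small omission is that you take $\overline{v}=0$ for granted, whereas the lemma does not assume it; one should first normalize by translating the wave (which leaves the trace values $u(v(y),y)$ unchanged), as the paper does in one line, and also note explicitly that the positivity of $T_\star$ together with \ref{hyp-arrh-4} guarantees $H_m>0$ before invoking the estimate of Theorem \ref{front-well-posed}.
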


\begin{proof}
It is sufficient to consider the case where $\overline{v}=0$ (otherwise one can
always consider another travelling wave solution of the same problem with the
triplet $(c,\tilde{v},\tilde{u})$, where $\tilde{v}(y):=v(y)-\overline{v}$ and
$\tilde{u}(x,y):=u(x+\overline{v},y))$. 

Set $u_{m}:=\essinf_{\Gamma}u$ . By \ref{hyp-arrh-1}, we
have
\begin{equation*}
\essinf_{y} R(y,u(v(y),y)) \geq \essinf_y R(y,u_{m}).
\end{equation*}
Note that, at this stage, we could have $u_{m}=0.$ But, as $c>0$, Theorem
\ref{T-well-posed} implies that
\begin{equation*}
u_{m}\geq\frac{g_{m}\,a_{m}}{a_{M}\,b_{M}}\,e^{-2\,c\,\frac{b_{M}}{a_{m}%
}\,\| v\|_{\infty}}.
\end{equation*}
The boundedness of $v$ then ensures that $u_{m}>0$ and a fortiori so is
$\essinf_y R(y,u_{m})$ by \ref{hyp-arrh-4}. Now we can apply Theorem \ref{front-well-posed} to get
\begin{equation*}
c\leq R_{M}\quad\text{and}\quad\| v_{y}\|_{\infty}\leq\sqrt
{\frac{R_{M}^{2}}{\essinf_y R(y,u_{m})^2}-1}.
\end{equation*}
%
Since $\overline{v}=0$, $\| v\|_{\infty}\leq Y\,\| v_{y}\|_{\infty}$ and
\begin{equation*}
u_{m}\geq\frac{g_{m}\,a_{m}}{a_{M}\,b_{M}}\,e^{-2\,c\,\frac{b_{M}}{a_{m}%
}\,Y\sqrt{\frac{R_{M}^{2}}{\essinf_y R(y,u_{m})^2}-1}}\geq
\frac{g_{m}\,a_{m}}{a_{M}\,b_{M}}\,e^{-2\,R_M \,\frac{b_{M}}{a_{m}}\,Y\,\frac{R_{M}}{\essinf_y R(y,u_{m})}}.
\end{equation*}
Taking the logarithm,
$
\ln u_{m}\geq-C-\frac{C \, Y}{\essinf_y R(y,u_{m})},
$
for some constant $C$ having the dependence stated in the lemma. Multiplying by $\essinf_y R(y,u_{m})$ and using the fact that $0<\essinf_y R(y,u_{m})\leq
R_{M}$, we deduce that
\begin{equation*}
 \ln (u_{m}) \essinf_y R(y,u_{m})\geq-C\,R_{M}-C \, Y. \qedhere
\end{equation*}
\end{proof}

We are now ready to give the analogous of Theorem \ref{K-bounded-below} under
the more general assumption \eqref{hyp-lim}.

\begin{theorem}
\label{existence-tw} Assume~\ref{hyp-param-1}--\ref{hyp-arrh-4} and
\eqref{hyp-lim}. There then exists a travelling wave solution $(c,v,u)$ to
\eqref{free-bdry-sys}--\eqref{bdry-cond} with a positive speed $c$.
\end{theorem}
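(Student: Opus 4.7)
The plan is to approximate $R$ from below by a family of nondegenerate combustion rates, apply Theorem~\ref{K-bounded-below} to each approximation, and then show via Lemma~\ref{lower-bound} and assumption~\eqref{hyp-lim} that once the truncation parameter is small enough the approximate travelling wave is actually a travelling wave of the original problem. No passage to a limit will thus be needed.

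Concretely, for each $\delta \in (0, R_M)$ I would define
\begin{equation*}
R^{\delta}(y, T) := \max\{R(y, T),\, \delta\}.
\end{equation*}
It is immediate that $R^\delta$ satisfies \ref{hyp-arrh-1}--\ref{hyp-arrh-4} with the same upper bound $R_M$ and the additional lower bound \eqref{hyp-bounded-below} with $R_m = \delta$. Theorem~\ref{K-bounded-below} therefore provides a travelling wave solution $(c^\delta, v^\delta, u^\delta)$ of~\eqref{free-bdry-sys}--\eqref{bdry-cond} with $R$ replaced by $R^\delta$, and up to an additive constant I may normalize $\overline{v}^\delta = 0$.

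The crux of the argument is a $\delta$-uniform lower bound on the trace $u^\delta(v^\delta(\cdot),\cdot)$. Lemma~\ref{lower-bound} applied to the $R^\delta$-problem yields
\begin{equation*}
u^\delta(v^\delta(y), y) \geq \min A^\delta, \qquad A^\delta := \{T > 0 :\, \ln(T)\, \essinf_y R^\delta(y,T) \geq -C(1+Y)\},
\end{equation*}
almost everywhere, where the constant $C = C(a_m, g_m, a_M, b_M, R_M)$ is independent of $\delta$ because the upper bound $R_M$ is preserved under the truncation. The key observation is that, since $R^\delta \geq R$ and $\ln T < 0$ on $(0,1)$, one has $\ln(T)\, \essinf_y R^\delta(y,T) \leq \ln(T)\, \essinf_y R(y,T)$ there, whereas both expressions are nonnegative for $T \geq 1$. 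Hence $A^\delta \subset A^0$, where $A^0$ is built analogously from $R$ itself, so that $\min A^\delta \geq T_0 := \inf A^0$, and assumption \eqref{hyp-lim} guarantees $T_0 > 0$.

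It then remains to eliminate the truncation. Setting $\delta_0 := \essinf_y R(y, T_0)$, which is positive by \ref{hyp-arrh-4}, and invoking the monotonicity \ref{hyp-arrh-1}, one has $R(y, T) \geq \delta_0$ for almost every $y$ and every $T \geq T_0$. Consequently, as soon as $\delta < \delta_0$,
\begin{equation*}
R^\delta(y, u^\delta(v^\delta(y), y)) = R(y, u^\delta(v^\delta(y), y)) \qquad \text{for a.e. } y \in \R,
\end{equation*}
so the front equation \eqref{front-eqn} is satisfied with the original $R$. Since $R$ does not appear in the temperature equation, $(c^\delta, v^\delta, u^\delta)$ is a genuine travelling wave of~\eqref{free-bdry-sys}--\eqref{bdry-cond}, with $c^\delta > 0$ by Theorem~\ref{front-well-posed}. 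The main technical difficulty will be the inclusion $A^\delta \subset A^0$, which is what transfers the full strength of \eqref{hyp-lim} uniformly through the approximation; the rest is essentially bookkeeping.
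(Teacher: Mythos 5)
Your proposal is correct, and it diverges from the paper's proof in an interesting way at the final step. Both arguments start identically: truncate the rate from below ($R^\delta=\max\{R,\delta\}$, the paper uses $R_n=\max\{R,\tfrac1n\}$), invoke Theorem~\ref{K-bounded-below}, and use Lemma~\ref{lower-bound} together with the monotonicity $R^\delta\geq R$ and \eqref{hyp-lim} to get a truncation-independent positive lower bound on the temperature at the front (your inclusion $A^\delta\subseteq A^0$ is exactly the paper's comparison $\gamma_n\geq\gamma>0$, and the constant $C$ is indeed uniform since $R^\delta$ keeps the same bound $R_M$ once $\delta<R_M$). The paper then passes to the limit $n\to+\infty$, using the compactness, stability and trace-convergence machinery of Section~\ref{sec-prelim} (Lemmas~\ref{compactness}, \ref{stability}, \ref{trace-conv}) plus a sandwich argument with $R_{n_0}$ to identify the weak-$\star$ limit of $R_n(y,u_n(v_n(y),y))$ with $R(y,u(v(y),y))$. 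You instead observe that, since the front temperature is bounded below by $T_0>0$ uniformly in $\delta$, and since $R(y,T)\geq\essinf_{y'}R(y',T_0)=:\delta_0>0$ for a.e.\ $y$ and all $T\geq T_0$ (by \ref{hyp-arrh-1} and \ref{hyp-arrh-4}), the truncation is inactive at the front as soon as $\delta<\delta_0$; because $R$ enters the system only through the front equation \eqref{front-eqn} evaluated at the front temperature, a single truncated solution with small $\delta$ is already an exact travelling wave, so no limiting procedure is needed. This is logically sound (the choice of $\delta$ is made after $T_0$ and $\delta_0$, which depend only on $R$ and the fixed constant $C$, so there is no circularity), and it is more economical: it bypasses the weak-$\star$ identification of the combustion term entirely. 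What the paper's route buys is mainly uniformity of method—the same compactness/stability lemmas are used for Theorem~\ref{K-bounded-below} and for the homogenization—whereas your argument yields the slightly stronger statement that the approximating problems produce exact solutions of the original one once the truncation level is below $\delta_0$.
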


\begin{proof}
Let us consider $R_{n}:=\max\left\{  R,\frac{1}{n}\right\}  $. We can apply
Theorem \ref{K-bounded-below} to get the existence of some nontrivial
travelling wave solution $(c_{n},v_{n},u_{n})$ with this truncated parameter. By Lemma \ref{lower-bound} and \eqref{hyp-lim}, $\essinf_{\Gamma_{n}}u_{n}\geq \gamma_n$ for
$$
\gamma_n := \min\{T > 0:\ln(T) \essinf_y R_n(y,T) \geq-C\, (1+Y)\}>0,
$$
where $C$ is independent of $n$. But as $R_{n}\geq R,$ we have 
$$
\gamma_n \geq \gamma:=\min\{T > 0:\ln(T) \essinf_y R(y,T) \geq-C\, (1+Y)\}>0,
$$
for all $n$. In particular,
\begin{equation*}
\inf_n \essinf_{y}R(y,u_{n}(v_{n}(y),y))\geq \essinf_{y}R(y,\gamma)>0
\end{equation*}
thanks to \ref{hyp-arrh-1} and \ref{hyp-arrh-4}. Defining $H_{n}(y):=R(y,u_{n}
(v_{n}(y),y))$, we can thus apply the compactness result of Lemma
\ref{compactness}---by assuming that $\overline{v}_{n}=0$ without loss of generality. Arguing similarly as before, we get the existence of some $(H,c,v,u)$ limit (of some
subsequence) of $(H_{n},c_{n},v_{n},u_{n})$ where $(c,v,u)$ is a solution of \eqref{T-eqn} and \eqref{frozen-front-eqn} for the
above $H$, thanks to Lemma \ref{stability}. It thus only remains to identify $H(y)$ with $R(y,u(v(y),y))$.

Applying Lemma \ref{trace-conv}, \ref{hyp-arrh-1}, and recalling that $R_{n}=\max\left\{
R,\frac{1}{n}\right\} $, we infer that 
\begin{equation*}
H_{n}(y) =R_n(y,u_{n}(v_{n}(y),y)) \geq R(y,u_{n}(v_{n}(y),y))
\to R(y,u(v(y),y))
\end{equation*}
for almost every $y \in \R$ (up to some subsubsequence). On the other hand, we also have for any fixed $n_{0}$ and $n \geq n_{0}%
$,
\begin{equation*}
H_{n}(y) =R_n(y,u_{n}(v_{n}(y),y)) \leq R_{n_{0}}(y,u_{n}%
(v_{n}(y),y))
\to R_{n_{0}}(y,u(v(y),y)).
\end{equation*}
Letting $n_{0} \to+\infty$, we deduce that $H_{n}(y) \to R(y,u(v(y),y))$
for almost every $y \in \R$. This is sufficient to identify this limit with the weak-$\star$ limit $H$ of
$H_{n}$.
\end{proof}

\begin{remark}
The proof suggests that the assumption \eqref{hyp-lim} can be slightly relaxed. The key result was indeed the lower bound of Lemma \ref{lower-bound}. Assumption  \eqref{hyp-lim} has only been used to imply that
$$
\min\{T > 0: \ln (T)  \essinf_y R(y,T) \geq-C\, (1+Y)\}>0,
$$
where $C=C(a_{m},g_{m},a_{M},b_{M},R_{M})$. It thus suffices to directly assume that this minimum is positive. This would be for instance the case if 
$$
\essinf_y R(y,T) \sim \frac{\tilde{C}}{|\ln T|} \quad \mbox{as} \quad T \downarrow 0,
$$ 
for some $\tilde{C}>C\, (1+Y)$. 
\end{remark}

\subsection{The case of small periods}

We finally consider the case of small $Y$ or more precisely of large values of the
ratio $\frac{\mu}{Y}$. For that case, we will 
show the existence of a
nontrivial travelling wave solution without the preceding assumption
\eqref{hyp-lim}. It includes in particular the typical model \eqref{combustion-type}. Let us start with an estimate on $v_{y}$. 

\begin{lemma}
\label{bound-front} Let $H\in L^{\infty}(\mathbb{R})$ be $Y$-periodic,
nonnegative, and assume that the pair $(c,v)\in\mathbb{R}^{+}\times
W_{\diese}^{2,\infty}(\mathbb{R})$ satisfies Equation
\eqref{frozen-front-eqn} almost everywhere. Then we have the estimate: $\|\arctan(v_{y})\|_{\infty}\leq
\frac{2 \, c\,Y}{\mu}.$
\end{lemma}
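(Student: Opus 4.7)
The plan is to recast the equation in terms of the tangent angle $\theta:=\arctan(v_y)$. Since $v\in W_\diese^{2,\infty}(\R)$, the derivative $v_y$ is continuous and $Y$-periodic, so $\theta$ is Lipschitz and $Y$-periodic with $\theta'=v_{yy}/(1+v_y^2)$ almost everywhere. Substituting this into \eqref{frozen-front-eqn} gives the pleasant first-order identity
\begin{equation*}
\mu\,\theta'(y)=H(y)\sqrt{1+v_y^2(y)}-c \quad \text{a.e.\ on } \R.
\end{equation*}
This change of variables is essentially the whole point, since it converts a quasilinear elliptic-type equation into a scalar ODE with a simple right-hand side.

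Next I would extract the usual \emph{solvability condition}. Integrating the identity over a period $[0,Y]$ and using $\theta(Y)=\theta(0)$ yields
\begin{equation*}
\int_0^Y H(y)\sqrt{1+v_y^2(y)}\,\dif y = c\,Y.
\end{equation*}
This replaces the crude bound $\sqrt{1+v_y^2}\le\text{const.}$ (which is unavailable here) and will be used to bound the growth of $\theta$ over one period.

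The next observation is that $\theta$ must vanish somewhere. Indeed, $v_y$ is continuous, $Y$-periodic, and has zero mean (because $v$ itself is $Y$-periodic), so by the intermediate value theorem there exists $y_0\in[0,Y]$ with $v_y(y_0)=0$, i.e.\ $\theta(y_0)=0$. This is the one non-obvious step, and it is where periodicity of $v$ (not just of $v_y$) enters.

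For any $y\in[y_0,y_0+Y]$, integrating the ODE from $y_0$ to $y$ gives
\begin{equation*}
\mu\,\theta(y)=\int_{y_0}^{y} H(s)\sqrt{1+v_y^2(s)}\,\dif s \;-\; c\,(y-y_0).
\end{equation*}
Since $H\ge 0$, the integral is nondecreasing in $y$, vanishes at $y_0$, and, by the solvability condition combined with periodicity of $H\sqrt{1+v_y^2}$, equals $c\,Y$ at $y_0+Y$; hence it lies in $[0,cY]$. The linear term $c(y-y_0)$ also lies in $[0,cY]$. Therefore $\mu\,\theta(y)\in[-cY,cY]$, and by $Y$-periodicity this extends to all $y\in\R$, yielding $\|\arctan(v_y)\|_\infty\le cY/\mu\le 2cY/\mu$ as claimed. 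I do not foresee any real obstacle: the whole argument hinges on the clean ODE form and on the IVT step, and the factor $2$ in the stated inequality simply leaves room for slack.
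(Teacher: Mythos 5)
Your proof is correct and follows essentially the same route as the paper: rewrite \eqref{frozen-front-eqn} as $\mu\,(\arctan v_y)'=H\sqrt{1+v_y^2}-c$, integrate over a period to get $c=\overline{H\sqrt{1+v_y^2}}$, and integrate from a zero of $v_y$ (which exists by periodicity of $v$). Your sign-tracking of the two nonnegative terms even gives the slightly sharper bound $c\,Y/\mu$, whereas the paper simply bounds $\int_0^Y|F'|$ by $2\,c\,Y$.
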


\begin{proof}
Let us define $f(y):=H(y)\sqrt{1+v_{y}^{2}(y)}$ and $F(y):=\mu \arctan
(v_{y}(y))$. These functions are $Y$-periodic and satisfy
\begin{equation*}
F'(y)=f(y)-c,
\end{equation*}
thanks to Equation \eqref{frozen-front-eqn}. Integrating over one
period, we first deduce that $c=\overline{f}$. Integrating then
over one arbitrary interval $(y_{\ast},y)$, such that $F(y_{\ast})=0$, we
deduce that
\begin{equation*}
\| F\|_{\infty}\leq\int_{0}^{Y}|F'|\leq2 \, c\,Y
\end{equation*}
(since $f \geq 0$ and $\int_0^Yf=c \, Y$ by what precedes).
The proof is complete by the definition of $F$. 
\end{proof}

Let us now give a new lower bound for the temperature at the front.

\begin{lemma}
\label{new-lower-bound}
Assume \ref{hyp-param-1}--\ref{hyp-arrh-4} and let
$(c,v,u)$ be a travelling wave solution to
\eqref{free-bdry-sys}--\eqref{bdry-cond} with a positive speed $c$. If in addition $\frac{\mu}{Y}>\frac{4 \, R_M}{\pi}$, then
\begin{equation*}
u\geq \frac{g_{m}\,a_{m}}{a_{M}\,b_{M}}\,e^{-\frac{2\,R_{M}\,b_{M} \,
Y}{a_{M}} \tan\left(  \frac{2 \, R_{M}\,Y}{\mu}\right)}>0 \quad
\mbox{almost everywhere on}\quad\Gamma.
\end{equation*}
\end{lemma}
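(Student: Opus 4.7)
The plan is to mimic the proof of Lemma \ref{lower-bound} but to use the new curvature-based estimate on $v_y$ provided by Lemma \ref{bound-front}, which does not require a positive lower bound on $R$. As before, we may assume $\overline{v}=0$ without loss of generality (by translating the travelling wave), so that $\|v\|_{\infty}\leq Y\,\|v_y\|_{\infty}$.

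First I would set $H(y):=R(y,u(v(y),y))$. By \ref{hyp-arrh-2}--\ref{hyp-arrh-3} together with the $Y$-periodicity of $u$ and $v$, the function $H$ is measurable, $Y$-periodic, and satisfies $0\leq H\leq R_M$. Hence Lemma \ref{bound-front} applies and yields
\begin{equation*}
\|\arctan(v_y)\|_{\infty}\leq \frac{2\,c\,Y}{\mu}.
\end{equation*}
Next I would use the a priori bound $c\leq R_M$ of Theorem \ref{front-well-posed}, combined with the hypothesis $\mu/Y>4R_M/\pi$. These give
\begin{equation*}
\frac{2\,c\,Y}{\mu}\leq \frac{2\,R_M\,Y}{\mu}<\frac{\pi}{2},
\end{equation*}
so the right-hand side lies strictly in the range of $\arctan$, and applying $\tan$ preserves the inequality:
\begin{equation*}
\|v_y\|_{\infty}\leq \tan\!\left(\frac{2\,R_M\,Y}{\mu}\right),\qquad
\|v\|_{\infty}\leq Y\tan\!\left(\frac{2\,R_M\,Y}{\mu}\right).
\end{equation*}

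Finally I would plug this into the pointwise lower bound \eqref{Linfty-esti} from Theorem \ref{T-well-posed}. Since the front lies in $\{x=v(y)\geq -\|v\|_{\infty}\}$, we get
\begin{equation*}
u(v(y),y)\geq \frac{g_m\,a_m}{a_M\,b_M}\,e^{c\,\frac{b_M}{a_m}(v(y)-\|v\|_{\infty})}\geq \frac{g_m\,a_m}{a_M\,b_M}\,e^{-2\,c\,\frac{b_M}{a_m}\|v\|_{\infty}}.
\end{equation*}
Using once more $c\leq R_M$ and the bound on $\|v\|_{\infty}$ above, the exponent is controlled by $-\frac{2\,R_M\,b_M\,Y}{a_M}\tan\!\left(\frac{2\,R_M\,Y}{\mu}\right)$, which gives exactly the claimed estimate.

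The proof is essentially a direct assembly of three already available estimates, so there is no real obstacle: the only point to check carefully is the applicability of Lemma \ref{bound-front} (the positivity hypothesis on $H$ is only nonnegativity there, which holds), and the sharp smallness condition $\mu/Y>4R_M/\pi$, which is precisely what ensures that $2R_MY/\mu$ remains in the domain of $\tan$.
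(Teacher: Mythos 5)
Your proof is correct and follows essentially the same route as the paper's: combine the lower bound \eqref{Linfty-esti} of Theorem \ref{T-well-posed} (after normalizing $\overline{v}=0$ so that $\|v\|_\infty\leq Y\|v_y\|_\infty$) with $c\leq R_M$ from Theorem \ref{front-well-posed} and the curvature estimate of Lemma \ref{bound-front}, the condition $\frac{\mu}{Y}>\frac{4R_M}{\pi}$ ensuring $\frac{2R_MY}{\mu}<\frac{\pi}{2}$ so that $\tan$ may be applied. One small remark: this argument (like the paper's own computation) actually produces the exponent $-\frac{2R_M b_M Y}{a_m}\tan\bigl(\frac{2R_MY}{\mu}\bigr)$ with $a_m$, not $a_M$, in the denominator, so the $a_M$ appearing in the stated bound is evidently a typo rather than something your derivation (or the paper's) literally yields.
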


\begin{proof}
By the estimate \eqref{Linfty-esti} of Theorem \ref{T-well-posed} and the
upper bound $c \leq R_{M}$ given by Theorem \ref{front-well-posed}, 
\begin{equation*}
\essinf_{\Gamma} u\geq\frac{g_{m}\,a_{m}}{a_{M}\,b_{M}%
}\,e^{-\frac{2\,R_{M}\,b_M \, Y}{a_{m}} \|v_{y}%
\|_{\infty}}.
\end{equation*}
The proof is complete by applying the previous
lemma.
\end{proof}

Here is finally our existence result for large ratio
$\frac{\mu}{Y}$.

\begin{theorem}
\label{existence-small-Y} Let us assume that
\ref{hyp-param-1}--\ref{hyp-arrh-4} hold together with the following
condition:
\begin{equation}
\frac{\mu}{Y} > \frac{4 \, R_{M}}{\pi}.\label{hyp-small-Y}%
\end{equation}
Then there exists a solution $(c,v,u)$ to \eqref{T-eqn} and \eqref{front-eqn}
with a positive speed $c$.
\end{theorem}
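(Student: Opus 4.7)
The plan is to adapt the truncation argument used in Theorem \ref{existence-tw}, substituting the new a priori lower bound of Lemma \ref{new-lower-bound} for the one of Lemma \ref{lower-bound}. First I would replace $R$ by $R_{n}:=\max\{R,1/n\}$. For $n$ large enough, $R_n$ still satisfies \ref{hyp-arrh-1}--\ref{hyp-arrh-4} with the same upper bound $R_M$, and in addition satisfies \eqref{hyp-bounded-below} with $R_m=1/n$. Theorem \ref{K-bounded-below} then produces a travelling wave $(c_n,v_n,u_n)$ with $c_n>0$, and without loss of generality $\overline{v}_n=0$.

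The crucial uniform estimate comes from Lemma \ref{new-lower-bound} applied to the truncated problem. Since $R_n\leq R_M$ and $\frac{\mu}{Y}>\frac{4 R_M}{\pi}\geq\frac{4 R_{n,M}}{\pi}$, the assumption of that lemma holds (note that $\tan(2R_M Y/\mu)$ is finite) and yields
\begin{equation*}
u_n \geq \gamma:=\frac{g_{m}\,a_{m}}{a_{M}\,b_{M}}\,e^{-\frac{2\,R_{M}\,b_{M}\,Y}{a_{M}}\tan\left(\frac{2\,R_{M}\,Y}{\mu}\right)}>0 \quad \text{almost everywhere on } \Gamma_n,
\end{equation*}
with $\gamma$ independent of $n$. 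Setting $H_n(y):=R(y,u_n(v_n(y),y))$, the monotonicity \ref{hyp-arrh-1} and positivity \ref{hyp-arrh-4} give $\inf_n\essinf_{y} H_n(y) \geq \essinf_{y}R(y,\gamma)>0$. Hence the hypothesis \eqref{sufficient-cond} of the compactness Lemma \ref{compactness} is satisfied.

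Up to a subsequence, $(H_n,c_n,v_n,u_n)$ then converges in the sense of \eqref{weak-conv} to some $(H,c,v,u)$, and Lemma \ref{stability} ensures that $(c,v,u)$ solves \eqref{T-eqn} and \eqref{frozen-front-eqn} for this $H$. Finally, to recognize $H(y)=R(y,u(v(y),y))$, I would repeat verbatim the sandwich step from the end of the proof of Theorem \ref{existence-tw}: using Lemma \ref{trace-conv} together with the inequalities
\begin{equation*}
R(y,u_n(v_n(y),y))\leq H_n(y)=R_n(y,u_n(v_n(y),y))\leq R_{n_0}(y,u_n(v_n(y),y))
\end{equation*}
for $n\geq n_0$, one passes first to the limit in $n$ via \ref{hyp-arrh-1} and then in $n_0$, identifying the pointwise (a.e.) limit of $H_n$ with $R(y,u(v(y),y))$, which must coincide with the weak-$\star$ limit $H$. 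The positivity of $c$ follows from the uniform bound $c_n\geq \essinf_{y}R(y,\gamma)>0$ passed to the limit.

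The conceptual obstacle, already taken care of by Lemma \ref{new-lower-bound}, is to uniformly prevent the temperature at the front from collapsing to zero as $n\to\infty$; the condition $\frac{\mu}{Y}>\frac{4R_M}{\pi}$ is used precisely so that Lemma \ref{bound-front} yields a uniform $L^\infty$ bound on $(v_n)_y$ (hence on $\|v_n\|_\infty$ via $\overline{v}_n=0$), which through \eqref{Linfty-esti} yields the $n$-independent lower bound $\gamma$. No further difficulty arises, as the remaining steps are a direct transcription of the argument used in Theorem \ref{existence-tw}.
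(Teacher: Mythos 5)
Your proposal is correct and follows exactly the route the paper takes: the paper's proof of this theorem is literally "the same as for Theorem \ref{existence-tw}, but using Lemma \ref{new-lower-bound} instead of Lemma \ref{lower-bound}" to get the $n$-independent lower bound on the temperature at the front, and your write-up spells out precisely that truncation-plus-compactness argument (with the same harmless notational slip as the paper, since $H_n$ should be $R_n(y,u_n(v_n(y),y))$ so that $v_n$ solves \eqref{front-eqn-n}).
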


The proof is exactly the same as for Theorem \ref{existence-tw}, but this time we use Lemma \ref{new-lower-bound} instead of Lemma \ref{lower-bound} to bound the temperature at the front from below. 

\subsection{H\"older regularity of the temperature}

At this stage, we only know that $u$ is $H^{\frac{1}{2}}$ at the front as the
trace of a $H^{1}$ function. During the homogenization, we will require a stronger regularity notably that $u$ be H\"{o}lder
continuous. Since the parameters will be brought to rapidly oscillate with period $Y = \varepsilon$, it is important to clarify their
influence (if any) on this regularity. The main difficulty will be to control the influence of $\frac{\mu}{Y}$ in \eqref{hyp-small-Y} or of $R$ in \eqref{hyp-lim}. This will be done by assuming the existence of a fixed ratio $\lambda_0>0$ and a fixed degenerate combustion rate $R_0:\R^+ \to \R^+$ such that 
\begin{equation}\label{hyp-both}
\begin{split}
\bullet & \quad \mbox{either}  \quad \Big[\frac{\mu}{Y} \geq \lambda_0 > \frac{4 \, R_M}{\pi}\Big]\\
\bullet & \quad \mbox{or} \quad \Big[\essinf_y R(y,\cdot) \geq R_0(\cdot) \quad \mbox{and} \quad \lim_{T \downarrow 0} R_0(T) \, |\ln T|=+\infty \Big].
\end{split}
\end{equation} 

Here is the precise result.

\begin{theorem}\label{Holder-regularity}
Assume that $(c,v,u)$ is a travelling solution to \eqref{free-bdry-sys}--\eqref{bdry-cond} with a positive speed $c$ and such that \ref{hyp-param-1}--\ref{hyp-arrh-4} hold. Assume in addition that there are $Y_0>0$, $\lambda_0>0$ and $R_0:\R^+ \to \R^+$ such that $0<Y\leq Y_0$ and \eqref{hyp-both} holds. Then (the extension of) the temperature $u$ is H\trema{o}lder continuous with
\begin{equation*}
|u(x,y)-u(\tilde{x},\tilde{y})| \leq C \left( |x-\tilde{x}|^\alpha+|y-\tilde{y}|^\alpha \right) \quad \forall (x,y),(\tilde{x},\tilde{y})\in \R^2,
\end{equation*}
for some positive constants $C$ and $\alpha$ depending only on $Y_0$, $\lambda_0$, $R_0$ and the bounds $a_{m}$, $b_{m}$, $g_{m}$, $a_{M}$, $b_{M}$, $g_{M}$ and $R_{M}$.
\end{theorem}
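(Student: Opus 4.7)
The plan is to obtain uniform a priori bounds on the quantities governing the problem, flatten the front by a bi-Lipschitz change of variables, apply De Giorgi--Nash--Moser boundary regularity on the resulting half-plane problem, and finally transfer the H\"older estimate to the reflection extension via the Lipschitz structure of $\Gamma$. Without loss of generality $\overline{v}=0$. Under the first alternative of \eqref{hyp-both}, Lemma \ref{new-lower-bound} combined with $\mu/Y\ge\lambda_0>4R_M/\pi$ (so that $\tan(2R_MY/\mu)\le\tan(2R_M/\lambda_0)<\infty$) yields $\essinf_\Gamma u\ge u_0>0$ depending only on $Y_0,\lambda_0$ and the data bounds. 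Under the second alternative, Lemma \ref{lower-bound} combined with $\essinf_y R(y,\cdot)\ge R_0(\cdot)$ and $Y\le Y_0$ yields $\essinf_\Gamma u\ge u_0>0$ depending on $Y_0,R_0$ and the data bounds. In both cases, setting $H(y):=R(y,u(v(y),y))$, \ref{hyp-arrh-1} and \ref{hyp-arrh-4} provide a lower bound $H\ge c_0>0$, and Theorem \ref{front-well-posed} gives
\begin{equation*}
c_0\le c\le R_M,\qquad\|v_y\|_\infty\le M:=\sqrt{R_M^2/c_0^2-1},
\end{equation*}
hence $\|v\|_\infty\le Y_0 M$ and, by Theorem \ref{T-well-posed}\eqref{Linfty-esti}, a uniform bound $0\le u\le U_\infty$ on $\Omega$.

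Consider the bi-Lipschitz homeomorphism $\Phi(x,y):=(x-v(y),y)$ of $\R^2$, whose Lipschitz constants (together with those of $\Phi^{-1}$) depend only on $M$, and set $U(z,y):=u(z+v(y),y)$. A direct computation shows that $U$ satisfies, in the weak sense, a linear divergence-form equation with bounded drift and a bounded conormal Neumann condition,
\begin{equation*}
-\Div(A\nabla U)+c\,b(y)\,U_z=0\quad\text{in }\{z<0\},\qquad A\nabla U\cdot e_z=c\,g(y)\quad\text{on }\{z=0\},
\end{equation*}
where
\begin{equation*}
A(y):=\begin{pmatrix}a(y)(1+v_y^2(y))&-a(y)v_y(y)\\-a(y)v_y(y)&a(y)\end{pmatrix}
\end{equation*}
has determinant $a^2(y)$ and is uniformly elliptic with ellipticity constants depending only on $a_m,a_M,M$, while $|c\,b|,|c\,g|\le R_M\max(b_M,g_M)$. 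Classical De Giorgi--Nash--Moser regularity for divergence-form equations with bounded measurable coefficients, bounded drift and bounded conormal Neumann data (applied on unit half-balls centered on the boundary and on unit interior balls) yields, for every $(z_0,y_0)$ with $z_0\le 0$, a H\"older estimate
\begin{equation*}
\|U\|_{C^{0,\alpha}(B_{1/2}(z_0,y_0)\cap\{z\le 0\})}\le C,
\end{equation*}
with $\alpha$ and $C$ depending only on the ellipticity constants, the $L^\infty$ bounds of the coefficients and of the Neumann data, and on $\|U\|_\infty\le U_\infty$. The $Y$-periodicity in $y$ allows to take $y_0\in[0,Y]$, and for pairs of points at distance larger than $1$ the global $L^\infty$ bound suffices, so $U$ is globally H\"older continuous on $\{z\le 0\}$ with constants independent of $Y\in(0,Y_0]$.

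Composing with $\Phi^{-1}$ transfers the H\"older estimate to $u$ on $\overline\Omega$. For the reflection extension, $\mathrm{ext}(u)(x,y)=u(\Psi(x,y))$ on $\R^2\setminus\overline\Omega$, where $\Psi(x,y):=(2v(y)-x,y)$ is Lipschitz with constants depending only on $M$. Hence $\mathrm{ext}(u)$ is H\"older on each of the two half-spaces, and continuity across $\Gamma$ together with the Lipschitz nature of $\Gamma$ yields H\"older continuity on all of $\R^2$ with the desired dependence of the constants. The main obstacle is to keep all constants uniform in $Y\in(0,Y_0]$: this follows from (i) the uniform $L^\infty$ bounds on $a,b,g,R$; (ii) the uniform control $\|v_y\|_\infty\le M$, giving uniform ellipticity of $A$ and uniform Lipschitz constants for $\Phi$ and $\Psi$; (iii) the bound $\|v\|_\infty\le Y_0 M$, hence a uniform $L^\infty$ bound on $u$ via Theorem \ref{T-well-posed}; and (iv) the fact that the De Giorgi--Nash--Moser constants depend only on ellipticity and $L^\infty$ bounds of the coefficients, not on their period of oscillation.
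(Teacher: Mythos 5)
Your proposal is correct, but it follows a genuinely different route from the paper's. The a priori estimates are the same as in Lemma \ref{estimates-tw}: the uniform lower bound on $u$ at the front via Lemma \ref{new-lower-bound} or Lemma \ref{lower-bound} according to which alternative of \eqref{hyp-both} holds, then uniform bounds on $c$, $\|v_y\|_\infty$, $\|v\|_\infty$ and $\|u\|_\infty$ via Theorems \ref{front-well-posed} and \ref{T-well-posed}. (A small remark valid for both your proof and the paper's: in the first alternative the lower bound on the frozen combustion rate obtained from \ref{hyp-arrh-4} depends on $R$ itself and not only on the quantities listed in the statement; to keep exactly the stated dependence one can instead bound $\|v_y\|_\infty\le\tan(2R_M/\lambda_0)$ directly from Lemma \ref{bound-front}.) Where you diverge is in the regularity step. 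The paper (Lemmas \ref{identification} and \ref{checking-assumptions}) keeps the curved front, shows that the reflection extension of $u$ solves a single divergence-form equation on all of $\R^2$ with bounded measurable coefficients and right-hand side $\tilde g_x$ in divergence form — the Neumann datum is absorbed into the jump of $\tilde g$ across $\Gamma$ — and then invokes only the \emph{interior} De Giorgi--Nash--Moser estimate of \cite[Theorem 8.24]{GiTr01}; this is done deliberately to avoid quoting boundary regularity on Lipschitz domains (as in \cite{Nit11}), whose constants' dependence on the boundary is not made explicit in the literature. You instead flatten the front by $\Phi(x,y)=(x-v(y),y)$, obtaining a conormal problem on the fixed half-plane with uniformly elliptic coefficients $A(y)$ (your computation of $A$, its determinant and the conormal datum is correct), and you quote flat-boundary De Giorgi--Nash--Moser regularity with bounded Neumann data as a black box, before transferring the estimate back through the bi-Lipschitz maps and gluing across $\Gamma$. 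This neatly sidesteps the paper's concern, since after flattening the boundary is a hyperplane and all dependence on $v$ sits in the coefficients, which are uniformly controlled; the price is that the boundary Harnack/H\"older estimate for conormal problems is a stronger input than the paper uses — though it is classical and can itself be reduced to the interior theorem by an even reflection in $z$ (the flattened Neumann datum then becomes $\partial_z(\mathrm{sgn}(z)\,c\,g(y))$, a divergence of a bounded function), at which point your argument becomes the flattened twin of the paper's.
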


\begin{remark}
\begin{enumerate}[label={{\rm (\roman*)}}]
\item The assumption \eqref{hyp-both} ensures the existence of $(c,v,u)$ by Theorems \ref{existence-tw} and \ref{existence-small-Y}. 
\item The constants $C$ and $\alpha$ do not depend on $\mu$ (provided that \eqref{hyp-both} holds). 
\end{enumerate}
\end{remark}

To prove this result we need to collect all the previous a priori estimates and apply standard arguments from the regularity theory of elliptic PDEs, see for instance \cite{GiTr01,Nit11}. The details are postponed in Appendix \ref{app-Holder}. 

\section{Homogenization}\label{sec-homogenization}

We will now be interested in the $\varepsilon$-dependent free
boundary problem
\begin{equation}%
\begin{cases}
c^{\varepsilon}\,b\left(\frac{y}{\varepsilon}\right) u_{x}^{\varepsilon}-\Div \left(
a\left(\frac{y}{\varepsilon}\right) \nabla u^{\varepsilon}\right)  =0, & \quad x<v^{\varepsilon
}(y),\\[1ex]
a\left(\frac{y}{\varepsilon}\right)\frac{\partial u^{\varepsilon}}{\partial\nu}=\frac
{c^{\varepsilon}\,g\left(\frac{y}{\varepsilon}\right)}{\sqrt{1+(v_{y}^{\varepsilon})^{2}}}, &
\quad x=v^{\varepsilon}(y),\\[1ex]
u^{\varepsilon}(x,y)\rightarrow0, & \quad\mbox{as~$x\rightarrow -\infty$},
\end{cases}
\label{eps-T-eqn-bis}%
\end{equation}
and%
\begin{equation}
-c^{\varepsilon}+R\left(\frac{y}{\varepsilon},u^{\varepsilon}\right)\sqrt{1+(v_{y}%
^{\varepsilon})^{2}}=\mu \, \frac{v_{yy}^{\varepsilon}}{1+(v_{y}^{\varepsilon
})^{2}},\quad x=v^{\varepsilon}(y),
\label{eps-front-eqn-bis}%
\end{equation}
where $\mu>0$ is a fixed curvature coefficient and $a,b,g,R$, are fixed $1$-periodic parameters assumed to satisfy \ref{hyp-param-1}--\ref{hyp-arrh-4} (thus with $Y=1$). The new parameter~ $\varepsilon$ is the period of the medium. Note that
the normal $\nu$ depends on~$\varepsilon$ too (which is not precised in \eqref{eps-T-eqn-bis}--\eqref{eps-front-eqn-bis} for simplicity). The purpose of this section is to find the limit of $(c^{\varepsilon
},v^{\varepsilon},u^{\varepsilon})$ as $\varepsilon\downarrow0$. 


To avoid confusion with the preceding notations, the new fresh region, front, etc., will be denoted differently. More precisely, we will denote by
\begin{equation*}
\Omega^{\varepsilon}:=\left\{  (x,y):x<v^{\varepsilon}(y)\right\}
\quad\mbox{and}\quad\Gamma^{\varepsilon}:=\left\{  (x,y):x=v^{\varepsilon
}(y)\right\}  ,
\end{equation*}
respectively the fresh region and the flame front, and by
\begin{equation*}
\Omega_{\per}^{\varepsilon}:=\Omega^{\varepsilon}\cap\{0<y<\varepsilon
\}\quad\mbox{and}\quad\Gamma_{\per}^{\varepsilon}:=\Gamma
^{\varepsilon}\cap\{0<y<\varepsilon\},
\end{equation*}
the corresponding restrictions to one period, just as in Section \ref{Sobolev-preliminaries}.
Likewise the $\varepsilon$-periodic (in $y$) Sobolev's spaces will be denoted
by $L_{\per}^{2}(\Omega^{\varepsilon})$ and $H_{\per}%
^{1}(\Omega^{\varepsilon})$. 


Our main convergence results are stated in the subsection below and their proofs are postponed in the next subsection. 

\subsection{Main convergence results}

We start by recalling the
definition of travelling wave solutions in this new setting.

\begin{definition}
\label{defn-eps-existence} 
Let $\varepsilon,\mu>0$ and assume \ref{hyp-param-1}--\ref{hyp-arrh-4} with $Y=1$. Then the triplet $(c^{\varepsilon},v^{\varepsilon},u^{\varepsilon})$ is a
solution to $(\ref{eps-T-eqn-bis})$--$(\ref{eps-front-eqn-bis})$ if
\begin{enumerate}[label={{\rm (\roman*)}}]
\item $c^{\varepsilon} \in \R$, $v^{\varepsilon}\in W_{\per}^{1,\infty}(\mathbb{R}),$
\item $u^{\varepsilon}\in H_{\per}^{1}(\Omega^{\varepsilon})$,
$u^{\varepsilon}\geq0$,
\item $v^{\varepsilon}$ satisfies \eqref{eps-front-eqn-bis} almost everywhere and
\begin{equation*}
\int_{\Omega_{\per}^{\varepsilon}}\left(  c^{\varepsilon
}\,b^{\varepsilon}\,u_{x}^{\varepsilon}\,w+a^{\varepsilon}\,\nabla
u^{\varepsilon}\,\nabla w\right)  =\int_{\Gamma_{\per}^{\varepsilon}%
}\frac{c^{\varepsilon}\,g^{\varepsilon}}{\sqrt{1+v_{y}^{\varepsilon}}%
}\,w\quad\forall w\in H_{\per}^{1}(\Omega^{\varepsilon})
\end{equation*}
(where $f^\varepsilon(y)=f(y/\varepsilon)$ for $f=a,b,g$).
\end{enumerate}
\end{definition}

Here is our first result. 

\begin{theorem}
\label{Theorem-homogenization} 
Let $\mu>0$ and assume \ref{hyp-param-1}--\ref{hyp-arrh-4} with $Y=1$. Let us then consider a family of solutions to
\eqref{eps-T-eqn-bis}--\eqref{eps-front-eqn-bis} of the form $\{(c^{\varepsilon},v^{\varepsilon
},u^{\varepsilon})\}_{\varepsilon \in (0,\varepsilon_0]}$ and such that
\begin{equation}
\overline{v}^\varepsilon=0\quad\forall\varepsilon \in (0,\varepsilon_0],
\label{front-fixed}
\end{equation}
for some $\varepsilon_0>0$.
Then: 
\begin{equation*}
\begin{cases}
\lim_{\varepsilon\downarrow0}c^{\varepsilon}=c^{0} & \quad\text{in}%
\quad\mathbb{R},\\
\lim_{\varepsilon\downarrow0}v^{\varepsilon}=v^{0} & \quad\text{uniformly on}\quad
\mathbb{R},\\
\lim_{\varepsilon\downarrow0}u^{\varepsilon}\,\mathbf{1}_{\Omega^{\varepsilon
}}=u^{0}\,\mathbf{1}_{x<0} & \quad\text{in}\quad L_{\loc}%
^{p}(\mathbb{R}_{y};L^{p}(\mathbb{R}_{x})), \quad \forall p \in [1,+\infty),
\end{cases}
\end{equation*}
where 
$$
c^{0}=\int_0^1 R \left(z,\frac{\overline{g}}{\overline{b}}\right) \dif z
$$ 
and $(v^{0},u^{0})$ are given by:
\begin{equation}
v^{0}=0\quad\mbox{and}\quad u^{0}(x)=\frac{\overline{g}}{\overline{b}%
} \,\exp \left(\frac{c^{0}\,\overline{b}}{\overline{a}}
\,x\right) \quad \mbox{for $x<0$}.\label{homogenized-triplet}
\end{equation}
Moreover, if we consider the extensions to $\R^2$, then $u^\varepsilon$ converges to $u^0$ for $p=+\infty$ too. More precisely, we have:
\begin{equation*}
\lim_{\varepsilon \downarrow 0} \ext (u^\varepsilon)=\ext(u^0) \quad \mbox{uniformly on} \quad \R^2,
\end{equation*}
where 
\begin{equation}\label{eps-extension}
\ext(u^\varepsilon)(x,y):=
\begin{cases}
u^\varepsilon(x,y), & \quad x<v^\varepsilon(y),\\
u^\varepsilon(2\,v^\varepsilon(y)-x,y), & \quad x>v^\varepsilon(y),
\end{cases}
\end{equation}
and $\ext(u^0)(x):=\frac{\overline{g}}{\overline{b}%
} \,\exp \left(-\frac{c^{0}\,\overline{b}}{\overline{a}}
\, |x|\right)$.
\end{theorem}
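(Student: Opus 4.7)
The plan is to (i) extract a uniformly convergent subsequence of $(c^\varepsilon,v^\varepsilon,u^\varepsilon)$, (ii) use periodicity and uniform H\"older regularity to show that the limiting temperature depends only on $x$, (iii) pass to the limit in the two equations via homogenization (made easy by the special structure $a=a(y)$), and (iv) conclude by uniqueness.

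For compactness, Theorem \ref{front-well-posed} gives $c^\varepsilon\le R_M$ and Lemma \ref{bound-front} yields $\|\arctan v^\varepsilon_y\|_\infty\le 2c^\varepsilon\varepsilon/\mu$, so $v^\varepsilon_y\to 0$ uniformly (since $\mu$ is fixed); combined with $\overline v^\varepsilon=0$, this gives $\|v^\varepsilon\|_\infty=O(\varepsilon)$, hence $v^\varepsilon\to v^0=0$ uniformly on $\R$. Theorem \ref{T-well-posed} provides uniform $L^\infty$ bounds and uniform exponential decay for $u^\varepsilon$ and $\ext(u^\varepsilon)$ as $|x|\to\infty$. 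For $\varepsilon_0$ small enough the first alternative of \eqref{hyp-both} holds on $(0,\varepsilon_0]$ with $\lambda_0:=\mu/\varepsilon_0$, so Theorem \ref{Holder-regularity} gives a uniform H\"older estimate on $\ext(u^\varepsilon)$. Arzel\`a-Ascoli on a large box together with the exponential decay yields (up to a subsequence) $\ext(u^\varepsilon)\to\tilde u$ uniformly on $\R^2$ and $c^\varepsilon\to c^0\in[0,R_M]$. That $\tilde u$ depends only on $x$ is seen as follows: for $y_1,y_2\in\R$, pick $k\in\Z$ with $y_2+k\varepsilon\in[y_1,y_1+\varepsilon]$; the $\varepsilon$-periodicity and uniform H\"older exponent $\alpha$ give
$$|\ext(u^\varepsilon)(x,y_1)-\ext(u^\varepsilon)(x,y_2)|=|\ext(u^\varepsilon)(x,y_1)-\ext(u^\varepsilon)(x,y_2+k\varepsilon)|\le C\varepsilon^\alpha\to 0,$$
so $\tilde u=\tilde u(x)$. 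In particular, since $v^\varepsilon\to 0$ uniformly, $u^\varepsilon(v^\varepsilon(y),y)=\ext(u^\varepsilon)(v^\varepsilon(y),y)\to\tilde u(0)$ uniformly in $y$.

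To identify $c^0$, I integrate \eqref{eps-front-eqn-bis} on $[0,\varepsilon]$; the right-hand side vanishes by $\varepsilon$-periodicity of $\arctan v^\varepsilon_y$, giving, after rescaling,
$$c^\varepsilon=\int_0^1 R\bigl(z,u^\varepsilon(v^\varepsilon(\varepsilon z),\varepsilon z)\bigr)\sqrt{1+(v^\varepsilon_y(\varepsilon z))^2}\,dz\longrightarrow\int_0^1 R(z,\tilde u(0))\,dz$$
by dominated convergence. For the temperature, I test the weak formulation (extended to $\R^2$ via Remark \ref{var-equiv-dis}) with $\varphi(x,y)=\theta(x)\eta(y)$, $\theta,\eta\in C^1_c(\R)$. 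The only delicate term is the flux $\int a(y/\varepsilon)\,u^\varepsilon_x\,\theta'\eta$, a product of two weakly convergent factors, and it is handled by the standard homogenization argument. Here the structure is especially favorable: $a$ depends only on $y$ and $\tilde u$ is $y$-independent, so the $x$-direction cell corrector vanishes and the effective coefficient reduces to the arithmetic mean, yielding $a^\varepsilon u^\varepsilon_x\rightharpoonup\overline a\,\tilde u'$. The remaining terms pass to the limit by weak-$\star$ convergence of $b^\varepsilon,g^\varepsilon$ together with the uniform convergence of $u^\varepsilon$ and of its trace. One gets
$$c^0\overline b\,\tilde u'=\overline a\,\tilde u''\text{ on }(-\infty,0),\quad \tilde u(-\infty)=0,\quad \overline a\,\tilde u'(0^-)=c^0\overline g,$$
so $\tilde u(x)=(\overline g/\overline b)\exp\bigl((c^0\overline b/\overline a)x\bigr)$, whence $\tilde u(0)=\overline g/\overline b$ and $c^0=\int_0^1 R(z,\overline g/\overline b)\,dz$.

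The limit $(c^0,v^0,u^0)$ being uniquely determined, the whole family (not just a subsequence) converges. The $L^p_{\loc}(\R_y;L^p(\R_x))$ convergence of $u^\varepsilon\mathbf{1}_{\Omega^\varepsilon}$ follows from the uniform convergence of $\ext(u^\varepsilon)$ combined with $\mathbf{1}_{\Omega^\varepsilon}\to\mathbf{1}_{x<0}$ almost everywhere (as in the proof of Lemma \ref{stability}), via dominated convergence and the uniform exponential decay. The main technical hurdle in this program is the passage to the limit in the homogenized flux, which in general would require a two-scale or corrector-based argument; fortunately the scalar, $y$-only dependence of $a$ together with the $y$-independence of $\tilde u$ trivializes the corrector and gives the explicit effective coefficient $\overline a$.
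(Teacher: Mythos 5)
Your skeleton matches the paper's (compactness via the uniform H\"older estimate, $y$-independence of the limit by periodicity plus equicontinuity, $c^0$ from integrating the front equation over one period, $u^0$ from the limiting weak formulation, then uniqueness of the limit to upgrade to the whole family), but two steps are asserted rather than proved, and both are genuine gaps. First, the ``uniform exponential decay'' you attribute to Theorem \ref{T-well-posed} is not uniform as you use it: the decay rate in \eqref{Linfty-esti} is $c^{\varepsilon}\,b_m/a_M$, so uniformity requires a positive lower bound $c^{\varepsilon}\ge c_m>0$ independent of $\varepsilon$, which you never establish. Without it, the dominating function for the $L^p(\R_x)$ convergence, the condition $\tilde u(-\infty)=0$ used to solve the limiting ODE, and even the identification $c^0=\int_0^1 R(z,\tilde u(0))\,\dif z$ (which needs $\tilde u(0)>0$, since $R(z,\cdot)$ is only defined and continuous on $\R^+$ and may degenerate as $T\downarrow 0$) all collapse: for degenerate $R$ the scenario $c^0=0$, $\tilde u\equiv 0$ cannot be excluded by your argument. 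The fix is available with tools you already invoke: under $\mu/\varepsilon\ge\lambda_0>4R_M/\pi$, Lemma \ref{new-lower-bound} bounds the front temperature below uniformly, and then \ref{hyp-arrh-1}, \ref{hyp-arrh-4} and Theorem \ref{front-well-posed} give $c^{\varepsilon}\ge\essinf_z R(z,u_m)>0$; this is exactly what Lemma \ref{estimates-tw} packages, and it should be cited explicitly.

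Second, the passage to the limit in the diffusion term is where your route genuinely diverges from the paper's, and as written it is incomplete. Saying that the cell corrector in the $x$-direction vanishes does not by itself prove $a(\cdot/\varepsilon)\,u^{\varepsilon}_x\rightharpoonup\overline a\,\tilde u_x$; one still needs a compensated-compactness/oscillating-test-function argument (e.g.\ div-curl with the divergence-free field $a(y/\varepsilon)\,e_1$), carried out with care on the $\varepsilon$-dependent domain $\Omega^{\varepsilon}$ and near the front. Moreover, with product test functions $\theta(x)\eta(y)$ you also generate the term $\int a(y/\varepsilon)\,u^{\varepsilon}_y\,\theta\,\eta'$, again a product of two weakly convergent factors, which your proposal does not address at all. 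The paper sidesteps both issues by a much lighter device: since the limit is already known to be $y$-independent, it suffices to test with $w=\varphi(x)$ in the periodic formulation of Definition \ref{defn-eps-existence} and to integrate by parts in $x$ (legitimate because $a$ does not depend on $x$), so that the oscillating coefficients only ever multiply $u^{\varepsilon}$ itself, which converges uniformly; every limit is then a weak-$\star$ times strong product, and no corrector or two-scale machinery is needed. I recommend either completing your div-curl argument (including the $u^{\varepsilon}_y$ term and the boundary strip $\{|x|\le\|v^{\varepsilon}\|_\infty\}$) or adopting the paper's integration-by-parts-in-$x$ shortcut.
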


\begin{remark}
\begin{enumerate}[label={{\rm (\roman*)}}]
\item Let us recall that $\overline{v}^\varepsilon=\frac{1}{\varepsilon} \int_0^\varepsilon v^\varepsilon$, $\overline{g}=\int_0^1 g$, etc.  
\item Such a family $\left\{(c^\varepsilon,v^\varepsilon,u^\varepsilon)\right\}_{\varepsilon \in (0,\varepsilon_0]}$ always exists provided that $\varepsilon_0$ is small enough, thanks to Theorem \ref{existence-small-Y}. 
\item The extensions above are defined just as in \eqref{extension}, but with respect to the respective fresh regions $\{x<v^\varepsilon(y)\}$ and $\{x<0\}$. As before, we will simply use the letters $u^\varepsilon$ and $u^0$ to denote these extensions. This means that throughout $u^0(x)=\frac{\overline{g}}{\overline{b}%
} \,\exp \left(-\frac{c^{0}\,\overline{b}}{\overline{a}}
\, |x|\right)$ for all $x \in \R$.
\item In the above the convergence holds for the ``whole family'' of triplets and not only for some particular subsequence.
\item The convergence of the temperatures is actually strong in $H^1$ too. The proof is rather standard once having the result above. A short proof is provided in Appendix \ref{app-strong-conv} for the reader's interest (this appendix is independent of the rest). 
\end{enumerate}
\end{remark}

\begin{remark}[The homogenized system] After homogenization, the front's profile, which becomes planar,
reduces to
\begin{equation*}
\Gamma^{0}=\{(x,y):x=v^{0}=0\}
\end{equation*}
and the fresh region to the half plane
\begin{equation*}
\Omega^{0}=\{(x,y):x<0\}
\end{equation*}
(here \eqref{front-fixed} allows to fix the front's profile and get $\Gamma^0$ at the limit).
The temperature $u^{0}$ of the fresh region, which becomes independent of $y,
$ is given by the solution of the one dimensional problem
\begin{equation}%
\begin{cases}
c^{0}\,\overline{b}\,u_{x}^{0}-\overline{a}\,u_{xx}^{0}=0, & \quad x<0,\\[1ex]
\overline{a}\,u_{x}^{0}=c^{0}\,\overline{g}, & \quad x=0,\\[1ex]
u^{0}(x)\rightarrow0, & \quad\mbox{as~$x\rightarrow -\infty$},
\end{cases}
\label{homogenized-T-eqn}%
\end{equation}
which is the homogenized version of \eqref{eps-T-eqn}. Finally the speed
$c^{0}$ is given by the equation
\begin{equation*}
-c^{0}+\int_0^1 R \left(z,u^{0}(0)\right) \dif z=0,
\end{equation*}
which is the homogenized version of \eqref{eps-front-eqn}.
\end{remark}

Let us now analyze the effects of the curvature on the propagation. For that we allow $\mu=\mu(\varepsilon)$ to depend on $\varepsilon$. Next we assume that the limit 
\begin{equation}
\lambda=\lim_{\varepsilon\rightarrow0}\frac{\mu(\varepsilon)}{\varepsilon
}\label{eqn-lambda}%
\end{equation}
exists and we propose to run the analysis
following the different values of $\lambda.$ We will technically need to assume in addition that
\begin{equation}\label{H}
\mbox{either} \quad \lambda>\frac{4 \, R_{M}}{\pi} \quad \mbox{or} \quad \lim_{T\downarrow0} \left\{|\ln T| \, \essinf_z R(z,T)\right\}=+\infty.
\end{equation}
This means that for a small curvature regime $\lambda$, the combustion rate $R$ will be allowed to degenerate but not too much (that is we will use the second assumption). We will also need to assume that:
\begin{equation}\label{hyp-param-5}
\mbox{The function $T \in \R^+ \mapsto \esssup_{z} R(z,T)$ is continuous at $T=\frac{\overline{g}}{\overline{b}}$}. 
\end{equation}
Note that this function is continuous for all $T$ if considering the typical combustion rate in \eqref{combustion-type} (with $A$ and $E$ bounded). During the proof, the continuity at $T=\frac{\overline{g}}{\overline{b}}$ only will be sufficient. This particular value will correspond to the constant value of the homogenized temperature at the front. 

Here is our second and last convergence result.

\begin{theorem}
\label{Theorem-homogenization-lambda} 
For each $\varepsilon>0$, let $\mu(\varepsilon)>0$ be such that the limit $\lambda$ in \eqref{eqn-lambda} exists in $[0,+\infty]$. Assume next that \ref{hyp-param-1}--\ref{hyp-arrh-4} and \eqref{H}--\eqref{hyp-param-5} hold with $Y=1$.  
Let us then consider a family of solutions to \eqref{eps-T-eqn-bis}--\eqref{eps-front-eqn-bis} with $\mu=\mu(\varepsilon)$ and of the form $\{(c^{\varepsilon
},v^{\varepsilon},u^{\varepsilon})\}_{\varepsilon \in (0,\varepsilon_0]}$ with
$$
\overline{v}^{\varepsilon
}=0 \quad \forall \varepsilon \in (0,\varepsilon_0],
$$
for some $\varepsilon_0>0$.
Then:
\begin{enumerate}[label={{\rm (\roman*)}}]
\item There exists $c^0=c^0(\lambda)>0$ such that 
$$
\lim_{\varepsilon \downarrow 0} (c^\varepsilon,v^\varepsilon,u^\varepsilon)=(c^0,v^0,u^0),
$$
where $(v^0,u^0)$ is defined as in Theorem \ref{Theorem-homogenization} and the limits are taken in the same sense.
\item This speed $c^0$ is uniquely determined as follows: 
\begin{enumerate}[label={{\rm (\roman*)}}]
\item If $\lambda=+\infty$ then $c^0=\int_0^1 R\left(z,\frac{\overline{g}}{\overline{b}}\right) \dif z$.
\item\label{corrector} If $\lambda \in (0,+\infty)$ then $c^0$ is the unique real such that the equation
\begin{equation*}
-c^{0}+R \left(z,\frac{\overline{g}}{\overline{b}}\right)\sqrt{1+w_z^{2}}=\lambda
\, \frac{w_{zz}}{1+w_{z}^{2}}
\end{equation*}
admits an almost everywhere $1$-periodic solution $w \in W^{2,\infty}(\R_z)$. 
\item If $\lambda=0$ then $c^0=\esssup_z R\left(z,\frac{\overline{g}}{\overline{b}}\right)$.
\end{enumerate}
\end{enumerate}
\end{theorem}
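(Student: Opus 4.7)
The plan is to follow a scheme analogous to Theorem \ref{Theorem-homogenization}: uniform a priori estimates, extraction of limits by compactness, passage to the limit in the equations, and finally characterization of $c^0$ via a careful analysis of a rescaled front equation. First I would collect uniform bounds: Lemmas \ref{lower-bound} and \ref{new-lower-bound} together with hypothesis \eqref{H} yield a positive lower bound for $u^\varepsilon$ on $\Gamma^\varepsilon$ independent of $\varepsilon$; Theorem \ref{front-well-posed} then provides $0<R_m\leq c^\varepsilon\leq R_M$ and, in the regime covered by the second case of \eqref{H}, a uniform control $\|v^\varepsilon_y\|_\infty\leq C$, while Lemma \ref{bound-front} handles the first case of \eqref{H} (which is compatible only with $\lambda>0$). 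In every situation $\|v^\varepsilon\|_\infty\leq \varepsilon\|v^\varepsilon_y\|_\infty\to 0$ thanks to $\overline{v}^\varepsilon=0$, so $v^\varepsilon\to 0=v^0$ uniformly. The uniform H\"older estimate of Theorem \ref{Holder-regularity}, whose hypotheses are met by \eqref{H}, combined with Arzel\`a--Ascoli, produces along a subsequence $c^\varepsilon\to c^0$ and $\ext(u^\varepsilon)\to \ext(u^0)$ uniformly on $\R^2$.

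The identification of $u^0$ follows the proof of Theorem \ref{Theorem-homogenization}: the standard homogenization argument in the weak formulation of the heat equation shows that $u^0$ is $y$-independent and solves \eqref{homogenized-T-eqn}, so $u^0(x)=\frac{\overline{g}}{\overline{b}}\exp\bigl(\frac{c^0\overline{b}}{\overline{a}}x\bigr)$ for $x<0$, and in particular its trace equals $\overline{g}/\overline{b}$. The new ingredient is the identification of $c^0$ in terms of $\lambda$. For that I would rescale the profile by $w^\varepsilon(z):=v^\varepsilon(\varepsilon z)/\varepsilon$, which is $1$-periodic with $\overline{w}^\varepsilon=0$ and satisfies
\begin{equation*}
-c^\varepsilon + R\bigl(z,u^\varepsilon(v^\varepsilon(\varepsilon z),\varepsilon z)\bigr)\sqrt{1+(w^\varepsilon_z)^2}=\frac{\mu(\varepsilon)}{\varepsilon}\frac{w^\varepsilon_{zz}}{1+(w^\varepsilon_z)^2}.
\end{equation*}
Integrating over $[0,1]$ annihilates the right-hand side via $\frac{\dif}{\dif z}\arctan(w^\varepsilon_z)$, yielding
\begin{equation*}
c^\varepsilon=\int_0^1 R\bigl(z,u^\varepsilon(v^\varepsilon(\varepsilon z),\varepsilon z)\bigr)\sqrt{1+(w^\varepsilon_z(z))^2}\,\dif z,
\end{equation*}
and by Lemma \ref{trace-conv} together with \ref{hyp-arrh-1}, the factor $R(z,u^\varepsilon(\cdots))$ converges a.e.\ to $R(z,\overline{g}/\overline{b})$.

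The three regimes are then treated as follows. When $\lambda=+\infty$, Lemma \ref{bound-front} in rescaled form gives $\|\arctan(w^\varepsilon_z)\|_\infty\leq 2c^\varepsilon/(\mu(\varepsilon)/\varepsilon)\to 0$, so $w^\varepsilon_z\to 0$ uniformly and dominated convergence delivers $c^0=\int_0^1 R(z,\overline{g}/\overline{b})\,\dif z$. When $\lambda\in(0,+\infty)$, the rescaled front equation together with the uniform bound on $w^\varepsilon_z$ and $c^\varepsilon$ forces $w^\varepsilon$ to be bounded in $W^{2,\infty}(\R)$; extracting a limit $w$ and applying a rescaled version of Lemma \ref{stability} shows that $(c^0,w)$ solves the corrector equation of item \ref{corrector}, and Theorem \ref{front-well-posed} gives uniqueness of $c^0$. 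When $\lambda=0$, Theorem \ref{front-well-posed} applied to the rescaled problem yields $c^\varepsilon\leq \esssup_z R(z,u^\varepsilon(v^\varepsilon(\varepsilon z),\varepsilon z))$, and assumption \eqref{hyp-param-5} combined with the uniform convergence of the trace implies $\limsup c^\varepsilon\leq \esssup_z R(z,\overline{g}/\overline{b})$. The matching lower bound is the classical vanishing-curvature limit from \cite{ChNa97}, transferred to our coupled setting via the uniform convergence of the effective rate.

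Since in each regime $c^0$ is uniquely determined by $\lambda$, every convergent subsequence has the same limit and the whole family converges. I expect the $\lambda=0$ regime to be the hard part: $w^\varepsilon_z$ need not remain bounded and will concentrate precisely where $R(z,\overline{g}/\overline{b})$ is largest, so the proof that $c^0$ attains the essential supremum (rather than some strictly smaller value) requires a careful concentration analysis and the geometric material from \cite{ChNa97} rather than mere compactness.
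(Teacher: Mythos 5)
Your architecture coincides with the paper's up to and including the upper bound in the $\lambda=0$ regime: uniform a priori bounds and the H\"older estimate give compactness, the periodicity argument gives $y$-independence and $v^0=0$, the weak formulation identifies $u^0$ and its trace $\overline{g}/\overline{b}$, and the rescaled profile $w^\varepsilon$ with the integration of \eqref{Homog-eqn-weps} over one period handles $\lambda=+\infty$ (via Lemma \ref{bound-front}) and $\lambda\in(0,+\infty)$ (via the $W^{2,\infty}$ bound and Theorem \ref{front-well-posed}); the upper bound $c^0\leq\esssup_z R(z,\overline{g}/\overline{b})$ via \eqref{front-esti}, \ref{hyp-arrh-1} and \eqref{hyp-param-5} is also the paper's. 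The genuine gap is the matching lower bound when $\lambda=0$: you do not prove it, but defer it to ``the classical vanishing-curvature limit from \cite{ChNa97}'' plus an unspecified ``concentration analysis.'' As written this does not close the argument: \cite{ChNa97} treats the limit $\lambda\downarrow 0$ for a \emph{fixed} rate, whereas here the effective rate $R\left(z,u^\varepsilon(v^\varepsilon(\varepsilon z),\varepsilon z)\right)$ varies with $\varepsilon$ and converges only in $L^1$/a.e., so transferring that result to the coupled problem would need a comparison or joint-continuity argument in (rate, curvature) that you neither state nor supply. Moreover your premise that ``$w^\varepsilon_z$ need not remain bounded'' is false in this setting: since the trace temperature converges uniformly to $\overline{g}/\overline{b}>0$, assumptions \ref{hyp-arrh-1} and \ref{hyp-arrh-4} give a positive lower bound on the effective rate, and estimate \eqref{front-esti}, which is independent of the curvature coefficient, bounds $w^\varepsilon_z$ in $L^\infty$ uniformly in $\varepsilon$; this is exactly Lemma \ref{lem-corrector}\ref{bound-corrector}.

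No concentration analysis is needed; the paper's argument is elementary. Multiply \eqref{Homog-eqn-weps} by a nonnegative $\varphi\in C^\infty_c(\R)$ and integrate the right-hand side by parts, so the curvature term becomes $-\frac{\mu(\varepsilon)}{\varepsilon}\int_\R \varphi_z\,\arctan(w^\varepsilon_z)$, which tends to zero because $\frac{\mu(\varepsilon)}{\varepsilon}\to 0$ and $\arctan(w^\varepsilon_z)$ is bounded. Then $\sqrt{1+(w^\varepsilon_z)^2}\geq 1$, $R\geq 0$ and the $L^1$ convergence of the rate (Lemma \ref{lem-corrector}\ref{conv-combustion-corrector}) yield, in the limit, $c^0\int_\R\varphi\geq\int_\R R\left(z,\frac{\overline{g}}{\overline{b}}\right)\varphi(z)\,\dif z$ for every nonnegative $\varphi$, hence $c^0\geq R\left(z,\frac{\overline{g}}{\overline{b}}\right)$ almost everywhere and $c^0\geq\esssup_z R\left(z,\frac{\overline{g}}{\overline{b}}\right)$. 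With this step inserted, the rest of your proposal (including the uniqueness-of-limit argument upgrading subsequential convergence to convergence of the whole family) matches the paper.
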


\begin{remark} 
\begin{enumerate}[label={{\rm (\roman*)}}]
\item Such a family $\left\{(c^\varepsilon,v^\varepsilon,u^\varepsilon)\right\}_{\varepsilon \in (0,\varepsilon_0]}$ always exists provided that~$\varepsilon_0$ is small enough, thanks to \eqref{H} and Theorems \ref{existence-tw} or \ref{existence-small-Y}.
\item Here also the whole family of triplets converges and this is in fact equivalent to the convergence of the whole family of ratios, see Remark \ref{rem-adherence-value}.  
\item Theorem \ref{Theorem-homogenization} is a particular case of Theorem \ref{Theorem-homogenization-lambda} with $\lambda=+\infty$. Actually in that case, we will see during the proof that the convergence of $v^\varepsilon$ towards $v^0$ holds in fact in $W^{1,\infty}$.
\item During the proof, we will technically need the assumption \eqref{hyp-param-5} only in the regime where $\lambda = 0$. 
\end{enumerate}
\end{remark}
\begin{remark}[The corrector]
The well-definition of $c^0$ in the item \ref{corrector} is an easy consequence of Theorem \ref{front-well-posed}. We also know that $w$ is unique up to an additive constant. The $w$ such that $\overline{w}=0$ will roughly speaking be a corrector in relation with the ansatz $v^\varepsilon(y) \approx \varepsilon \, w \left(\frac{y}{\varepsilon} \right)$. 
\end{remark}

\begin{remark}[The homogenized speed]
Here again $v^0=0$ and $u^0$ is deduced from $c^0$ through the formula in \eqref{homogenized-triplet}. 
The knowledge of the mapping 
$$
\lambda \mapsto c^0=c^0(\lambda)
$$
then entirely determines the homogenized triplets parametrized by $\lambda$. A qualitative analysis of this mapping will be done in the next section.
\end{remark}

\subsection{Proofs of the homogenization results}

Let us prove Theorems \ref{Theorem-homogenization} and \ref{Theorem-homogenization-lambda}. We proceed by giving a few lemmas and we start with a compacity result.
\begin{lemma}
\label{homogenization-compacity} 
Let us assume the hypotheses of Theorem \ref{Theorem-homogenization-lambda}. Then there exist $c_0 >0$, $v^0 \in C_b(\R)$ and $u^0 \in C_b(\R^2)$, 
such that 
\begin{equation*}
\begin{cases}
\mbox{$c^{\varepsilon} \to c^{0}$ in $\mathbb{R}$,}\\
\mbox{$v^{\varepsilon} \to v^{0}$ uniformly on $\mathbb{R}$,}\\
\mbox{$u^{\varepsilon} \to u^{0}$ uniformly on $\R^2$,}
\end{cases}
\end{equation*} 
as $\varepsilon \downarrow 0$ and where $u^\varepsilon$ is extended to $\R^2$ by reflection. This convergence holds more precisely at least along some sequence $\{\varepsilon_n\}_n$ converging to zero.
\end{lemma}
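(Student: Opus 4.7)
The strategy is to collect the a priori bounds from Section \ref{sec-prelim} and the H\"older estimate of Theorem \ref{Holder-regularity}, all of which will be uniform in $\varepsilon$ under the hypothesis \eqref{H}, and then extract a converging subsequence by an Arzel\`a--Ascoli type argument. The key preliminary is a uniform positive lower bound on the trace $u^\varepsilon_{|\Gamma^\varepsilon}$.

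\emph{Step 1: Uniform bounds for the front.} Applying Theorem \ref{front-well-posed} to the $\varepsilon$-periodic function $H^\varepsilon(y):=R(y/\varepsilon,u^\varepsilon(v^\varepsilon(y),y))$, we immediately get $0<c^\varepsilon\leq R_M$. To get a matching positive lower bound on $c^\varepsilon$ and a uniform Lipschitz bound on $v^\varepsilon$, it suffices to produce $\delta>0$ such that $u^\varepsilon\geq \delta$ on $\Gamma^\varepsilon$ for all small $\varepsilon$; then by \ref{hyp-arrh-4} and \ref{hyp-arrh-1} we will have $H^\varepsilon\geq H_m:=\essinf_z R(z,\delta)>0$ uniformly. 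The two alternatives in \eqref{H} correspond to the two lemmas that produce such a $\delta$. In the first case $\lambda>4R_M/\pi$, we have $\mu(\varepsilon)/\varepsilon>4R_M/\pi$ for $\varepsilon$ small, so Lemma \ref{new-lower-bound} gives $u^\varepsilon\geq (g_m a_m/a_M b_M)\exp(-\frac{2R_Mb_M\varepsilon}{a_M}\tan(2R_M\varepsilon/\mu(\varepsilon)))$, which stays bounded below by a fixed $\delta>0$ since $2R_M\varepsilon/\mu(\varepsilon)\to 2R_M/\lambda<\pi/2$. In the second case, Lemma \ref{lower-bound} (with $Y=\varepsilon\leq\varepsilon_0$) provides a lower bound depending only on $C(1+\varepsilon_0)$, hence uniform. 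Either way, $H_m>0$ is uniform.

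\emph{Step 2: Convergence of $c^\varepsilon$ and $v^\varepsilon$.} Theorem \ref{front-well-posed} now gives, uniformly in $\varepsilon$,
\[
H_m\leq c^\varepsilon\leq R_M,\qquad \|v^\varepsilon_y\|_\infty\leq M:=\sqrt{R_M^2/H_m^2-1}.
\]
The first inequality allows us to extract $c^{\varepsilon_n}\to c^0\in[H_m,R_M]\subset(0,+\infty)$. Since $v^\varepsilon$ is $\varepsilon$-periodic with $\overline{v}^\varepsilon=0$, there exists $y_*^\varepsilon\in[0,\varepsilon]$ with $v^\varepsilon(y_*^\varepsilon)=0$, whence $\|v^\varepsilon\|_\infty\leq M\,\varepsilon\to 0$. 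Thus $v^\varepsilon\to v^0\equiv 0$ uniformly on $\R$, and $v^0\in C_b(\R)$.

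\emph{Step 3: Compactness of $u^\varepsilon$.} Hypothesis \eqref{H} is precisely what is needed to invoke Theorem \ref{Holder-regularity} with $Y_0=\varepsilon_0$, a fixed $\lambda_0$ (close to $\lambda$ but still exceeding $4R_M/\pi$) and the degenerate $R_0$ from \eqref{H}. It follows that the reflection extensions of $u^\varepsilon$ satisfy $|u^\varepsilon(x,y)-u^\varepsilon(\tilde x,\tilde y)|\leq C(|x-\tilde x|^\alpha+|y-\tilde y|^\alpha)$ with $C,\alpha$ independent of $\varepsilon$. Moreover, by \eqref{Linfty-esti} and the uniform bound $\|v^\varepsilon\|_\infty\leq M\varepsilon\to 0$, the extension satisfies a uniform exponential tail
\[
0\leq u^\varepsilon(x,y)\leq \frac{g_M a_M}{a_m b_m}\,e^{3H_m b_m\|v^\varepsilon\|_\infty/a_M}\,e^{-H_m b_m\,|x|/a_M}\quad\text{for }|x|\geq \|v^\varepsilon\|_\infty,
\]
where I used the reflection formula in \eqref{eps-extension} to handle $x>v^\varepsilon(y)$. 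Combining uniform equicontinuity on compact sets (Arzel\`a--Ascoli) with this uniform decay, a diagonal extraction gives $u^{\varepsilon_n}\to u^0$ uniformly on $\R^2$, with $u^0\in C_b(\R^2)$.

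The main obstacle is Step 1: the trace lower bound must be made uniform in~$\varepsilon$, which is why the dichotomy in~\eqref{H} appears and why Lemma \ref{new-lower-bound} had to be proved in a form where the constant involves $\varepsilon/\mu(\varepsilon)$ rather than $\mu$ alone. Once this is done, Steps 2 and 3 are essentially straightforward applications of the compactness tools already at hand, the only non-cosmetic point being the upgrade from uniform convergence on compact sets to uniform convergence on~$\R^2$, achieved through the uniform exponential decay of the extended temperature.
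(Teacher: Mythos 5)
Your proposal is correct and follows essentially the same route as the paper: the paper's proof also reduces everything to $\varepsilon$-uniform bounds and equicontinuity obtained from the dichotomy in \eqref{H} (choosing $Y_0=\varepsilon_0$, a fixed $\lambda_0$ with $\lambda>\lambda_0>\frac{4R_M}{\pi}$, or $R_0(T)=\essinf_z R(z,T)$) together with Theorem \ref{Holder-regularity}, and then concludes by Bolzano--Weierstrass and Ascoli--Arzel\`a. The only difference is organizational: the paper invokes Lemma \ref{estimates-tw} from the appendix, whereas you re-derive those estimates inline via Lemmas \ref{lower-bound} and \ref{new-lower-bound}, which is the same argument unpacked (your explicit tail constant needs a harmless adjustment on the thin region $|x|\lesssim\|v^\varepsilon\|_\infty$, but this does not affect the conclusion).
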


\begin{proof}
By the compactness theorems of Bolzano-Weierstrass and Ascoli-Arz\'ela, it  suffices to prove that for some $\varepsilon_0$ small enough:
\begin{equation}\label{claim-complicate}
\begin{cases}
\mbox{The family $\{c^\varepsilon\}_{\varepsilon \in (0,\varepsilon_0]}$ is positively bounded from below and above.}\\
\mbox{The family $\{v^\varepsilon\}_{\varepsilon \in (0,\varepsilon_0]}$ is bounded and equicontinuous on $\R$.}\\
\mbox{The family $\{u^\varepsilon\}_{\varepsilon \in (0,\varepsilon_0]}$ is bounded and equicontinuous on $\R^2$.}
\end{cases}
\end{equation}

Let us consider two cases depending on which condition holds in \eqref{H}.

\medskip

{\bf 1.} {\it Proof of \eqref{claim-complicate} if $\lambda>\frac{4 \, R_M}{\pi}$.} Let us apply Lemma \ref{estimates-tw} (in the appendix) and Theorem \ref{Holder-regularity} to each travelling wave solution $(c^\varepsilon,v^\varepsilon,u^\varepsilon)$. That is to say let us apply these results with: 
\begin{equation}\label{redaction-compliquee}
\begin{cases}
\mbox{the parameters $y \mapsto a \left(\frac{y}{\varepsilon}\right)$, $y \mapsto b \left(\frac{y}{\varepsilon}\right)$ and $y \mapsto g \left(\frac{y}{\varepsilon}\right)$},\\
\mbox{the combustion rate $(y,T) \mapsto R \left(\frac{y}{\varepsilon},T\right)$},\\
\mbox{the period $Y=\varepsilon$},\\
\mbox{and the curvature coefficient $\mu=\mu(\varepsilon)$.}
\end{cases}
\end{equation}
This gives us estimates depending only on the bounds in \ref{hyp-param-1}--\ref{hyp-arrh-4} which can be taken independent of $\varepsilon$, thanks to the special form of the parameters in \eqref{redaction-compliquee}. These estimates also depend on some $Y_0$ and $\lambda_0$. If we can choose them to be independent of $\varepsilon$ too, then the proof of \eqref{claim-complicate} will be complete and so will be the proof of the lemma. 

The $Y_0$ needs to satisfy $0<Y \leq Y_0$ and the choice $Y_0=\varepsilon_0$ clearly works. It now only remains to choose $\lambda_0$ such that \eqref{hyp-both} holds. Let us take some $\lambda_0$ such that 
$
\lambda > \lambda_0>\frac{4 \, R_M}{\pi}.
$ 
Taking a smaller $\varepsilon_0$ if necessary, we infer that
$
\frac{\mu(\varepsilon)}{\varepsilon} \geq \lambda_0>\frac{4 \, R_M}{\pi}
$ 
for all $\varepsilon \leq \varepsilon_0$. This is exactly the first condition of \eqref{hyp-both} which completes the proof.  

\medskip

{\bf 2.} {\it Proof of \eqref{claim-complicate} if $\lim_{T \downarrow 0} \left\{|\ln T| \, \essinf_z R(z,T) \right\}=+\infty$.} We argue as before, but now we choose $R_0$ to have the second condition of \eqref{hyp-both} (thus with the combustion rate in \eqref{redaction-compliquee}). Clearly   
$
R_0(T):=\essinf_z R(z,T)
$
works for each $\varepsilon$ and
is independent of $\varepsilon$. This completes the proof.
\end{proof}


We proceed by giving some properties on $v^0$ and $u^0$.

\begin{lemma}
\label{homogenization-limit-de-v} 
Let $(v^0,u^0) \in C_b(\R) \times C_b(\R^2)$ be given by the
preceding lemma. Then $v^0 \equiv 0$ and $u^0=u^0(x)$ does not depend on $y$.
\end{lemma}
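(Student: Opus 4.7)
The plan is to exploit the combination of $\varepsilon$-periodicity in $y$ with the equicontinuity already supplied by Lemma~\ref{homogenization-compacity} (which, in its proof, relies on a uniform $W^{1,\infty}$ bound for $v^\varepsilon$ and on Theorem~\ref{Holder-regularity} for $u^\varepsilon$).

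First I would show that neither $v^0$ nor $u^0(x,\cdot)$ can depend on $y$. Fix any $y,y'\in\R$. For each $\varepsilon>0$ choose $k_\varepsilon\in\Z$ with $|y' - y - k_\varepsilon\varepsilon|\leq\varepsilon$. Since $v^\varepsilon$ is $\varepsilon$-periodic in $y$, $v^\varepsilon(y+k_\varepsilon\varepsilon)=v^\varepsilon(y)$, and the equicontinuity of the family $\{v^\varepsilon\}$ yields a common modulus of continuity $\omega$ with
\[
|v^\varepsilon(y')-v^\varepsilon(y)|=|v^\varepsilon(y')-v^\varepsilon(y+k_\varepsilon\varepsilon)|\leq \omega(\varepsilon).
\]
Sending $\varepsilon\downarrow 0$ and using the uniform convergence $v^\varepsilon\to v^0$ gives $v^0(y)=v^0(y')$, hence $v^0$ is constant. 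The same translation/periodicity argument applied (for each fixed $x$) to the $\varepsilon$-periodic-in-$y$ family $u^\varepsilon$, whose uniform H\"older regularity is guaranteed in the proof of Lemma~\ref{homogenization-compacity}, yields $u^0(x,y)=u^0(x,y')$, so $u^0$ depends only on $x$.

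Second, I would identify the constant value of $v^0$ through the normalization $\overline{v}^\varepsilon=0$. Since
\[
0=\overline{v}^\varepsilon=\frac{1}{\varepsilon}\int_0^\varepsilon v^\varepsilon(y)\,\mathrm{d}y,
\]
and $v^\varepsilon\to v^0$ uniformly with $v^0$ constant, for every $\delta>0$ we have $|v^\varepsilon-v^0|\leq\delta$ eventually, so $|\overline{v}^\varepsilon-v^0|\leq\delta$. Passing to the limit gives $v^0=0$, which concludes the proof.

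The main (very mild) obstacle is simply to record the equicontinuity used above; but this was already established at the compacity stage, so in effect the lemma should follow from a quick ``rapidly oscillating periodicity'' argument combined with uniform convergence. No new estimate on the free boundary problem is needed.
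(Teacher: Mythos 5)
Your argument is correct and is essentially the paper's own: both exploit the $\varepsilon$-periodicity by translating one point by an integer multiple of $\varepsilon$ to within $\varepsilon$ of the other, then pass to the limit (the paper does this via uniform convergence to the continuous limit at the shifted points $b_\varepsilon\to b$, while you invoke the equicontinuity already recorded in the proof of Lemma~\ref{homogenization-compacity} — a negligible difference), and both conclude $v^0=0$ from $\overline{v}^\varepsilon=0$.
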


\begin{proof}
Let $a<b$ and let us first show that $u^0(x,a)=u^0(x,b)$ for any $x \in \R$.
We have%
\begin{equation*}
b=a+\frac{b-a}{\varepsilon} \, \varepsilon \quad \text{so that} \quad \underbrace{a+\llcorner\frac
{b-a}{\varepsilon}\lrcorner \, \varepsilon}_{=: b_\varepsilon} \leq b \quad \forall \varepsilon>0,
\end{equation*}
where throughout the symbol ``$\llcorner \cdot \lrcorner$'' is used for the
lower integer part. By periodicity
$
u^{\varepsilon} (x,b_\varepsilon)=u^\varepsilon(x,a)$ and we also know that
$b_\varepsilon$ converges to $b$, as $\varepsilon \downarrow 0$. Let us pass to the limit in the last equality by using the uniform convergence of $u^\varepsilon$ towards $u^0$ (holding at least along the sequence given in Lemma \ref{homogenization-compacity}). We get that
$$
u^0(x,b)=\lim_{\varepsilon \downarrow 0} u^{\varepsilon} (x,b_\varepsilon)=\lim_{\varepsilon \downarrow 0} u^\varepsilon(x,a)=u^0(x,a),
$$
which completes the proof that $u^0=u^0(x)$. We proceed in the same way to show that $v^0$ does not depend on $y$. In particular, $v^0$ is a constant which is necessarily zero, since $\overline{v}^\varepsilon=0$ (see the assumptions of Theorem \ref{Theorem-homogenization-lambda}).
\end{proof}

\begin{lemma}
\label{convergence-properties-u0} 
Let $u^0 \in C_b(\R)$, $u^0=u^0(x)$, be given by the
preceding lemmas. Then $u^0$ is even, tends to zero at infinity, and $u^\varepsilon \to u^0$ in $L^p_{\loc}(\R_y,L^p(\R_x))$ for all $p \geq 1$ and as $\varepsilon \downarrow 0$ (at least along the sequence given by Lemma \ref{homogenization-compacity}).
\end{lemma}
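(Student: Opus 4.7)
The plan is to deduce all three assertions from the uniform convergences $u^\varepsilon \to u^0$ on $\R^2$ and $v^\varepsilon \to 0$ on $\R$ provided by Lemma \ref{homogenization-compacity}, combined with the a priori exponential bound of Theorem \ref{T-well-posed} applied to each~$(c^\varepsilon,v^\varepsilon,u^\varepsilon)$.

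For the evenness of $u^0$, I would use the defining formula \eqref{eps-extension} of the reflected extension. Fix $x\in\R$ with $x\neq 0$; for $\varepsilon$ small enough, $\|v^\varepsilon\|_\infty<|x|$, so exactly one of $x$ or $-x$ is $>v^\varepsilon(y)$ and the other is $<v^\varepsilon(y)$, for every $y\in\R$. The reflection formula then gives $u^\varepsilon(x,y)=u^\varepsilon(2v^\varepsilon(y)-x,y)$. Since $u^\varepsilon\to u^0$ uniformly, $u^0$ is continuous, $u^0$ depends only on $x$ (Lemma \ref{homogenization-limit-de-v}), and $2v^\varepsilon(y)-x\to -x$ uniformly in $y$, letting $\varepsilon\downarrow 0$ yields $u^0(x)=u^0(-x)$.

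For the decay at infinity, I would apply the upper bound of Theorem \ref{T-well-posed}\eqref{Linfty-esti} to each triplet $(c^\varepsilon,v^\varepsilon,u^\varepsilon)$ with the rescaled parameters $a(\cdot/\varepsilon),\,b(\cdot/\varepsilon),\,g(\cdot/\varepsilon)$; these parameters satisfy \ref{hyp-param-1}--\ref{hyp-param-2} with the very same constants $a_m,\ldots,g_M$. This yields, for $x<v^\varepsilon(y)$,
\begin{equation*}
u^\varepsilon(x,y)\leq \frac{g_M\,a_M}{a_m\,b_m}\,e^{c^\varepsilon\,\frac{b_m}{a_M}\,(x+\|v^\varepsilon\|_\infty)}.
\end{equation*}
Combining this with the reflection formula for $x>v^\varepsilon(y)$, and using $c^\varepsilon\to c^0>0$ together with $\|v^\varepsilon\|_\infty\to 0$, I obtain an $\varepsilon$-uniform bound $u^\varepsilon(x,y)\leq C_0\,e^{-\gamma|x|}$ valid for $|x|$ large enough, with positive constants $C_0$ and $\gamma$ independent of $\varepsilon$ and $y$. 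Passing to the limit via the uniform convergence then gives $u^0(x)\leq C_0\,e^{-\gamma|x|}$ for $|x|$ large, hence $u^0(x)\to 0$ as $x\to\pm\infty$.

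For the $L^p_{\loc}(\R_y,L^p(\R_x))$ convergence, I would split the $x$-integral at some level $M>0$. On a compact set $K\subset\R_y$ and for $|x|\leq M$, the uniform convergence on $\R^2$ immediately yields $\int_K\int_{|x|\leq M}|u^\varepsilon-u^0|^p\,dx\,dy\leq 2M\,|K|\,\|u^\varepsilon-u^0\|_{L^\infty(\R^2)}^p\to 0$. On $|x|>M$, the uniform exponential bound from the previous step (and the analogous one for $u^0$) yields
\begin{equation*}
\int_K\int_{|x|>M}\!\left(|u^\varepsilon|^p+|u^0|^p\right)dx\,dy\leq C\,|K|\int_{|x|>M}\!e^{-p\gamma|x|}\,dx,
\end{equation*}
which is made arbitrarily small by choosing $M$ large. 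The combination of the two estimates proves the $L^p_{\loc}$ convergence for every $p\in[1,+\infty)$. The only technical point is the careful bookkeeping of the exponential estimate across the reflection, so as to get constants $C_0,\gamma$ independent of $\varepsilon$; this is straightforward once the dependence of Theorem~\ref{T-well-posed}'s bounds on the data is traced.
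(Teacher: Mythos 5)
Your proposal is correct and follows essentially the same route as the paper: both rest on the $\varepsilon$-uniform exponential bound obtained from Theorem \ref{T-well-posed}\eqref{Linfty-esti} (with the rescaled parameters, the bound on $\|v^\varepsilon\|_\infty$ and the positive lower bound on $c^\varepsilon$), carried through the reflection \eqref{eps-extension}, with evenness read off from the extension. The only cosmetic difference is that you conclude the $L^p_{\loc}$ convergence by an explicit splitting in $x$ whereas the paper invokes the dominated convergence theorem, and you spell out the evenness argument that the paper states in one line.
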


\begin{proof}
Any uniform limit $u^0$ of $u^\varepsilon$ on $\R^2$ is clearly even in $x$, because of the choice of the extension \eqref{eps-extension}. To show the other properties, it suffices to verify that 
\begin{equation}\label{claim-integrability}
|u^\varepsilon(x,y)| \leq C \, e^{-C \, |x|} \quad \forall x,y \in \R^2,
\end{equation} 
for some positive constant $C$ independent of $\varepsilon \leq \varepsilon_0$. Indeed, the zero limit at infinity would be preserved when $\varepsilon \downarrow 0$ and the $L^p$ convergence would follow from the dominated convergence theorem. Let us thus show \eqref{claim-integrability}. First recall that the parameters $y \mapsto a \left(\frac{y}{\varepsilon} \right)$, $y \mapsto b \left(\frac{y}{\varepsilon} \right)$, etc., in \eqref{eps-T-eqn-bis}--\eqref{eps-front-eqn-bis} satisfy \ref{hyp-param-1}--\ref{hyp-arrh-4} with the same bounds $a_m$, $b_m$, etc., independent of $\varepsilon$. Recall also that the ($\varepsilon$-periodic) front $v^\varepsilon$ is bounded on $\R$ independently of $\varepsilon \leq \varepsilon_0$ (thanks to the bound on $v^\varepsilon_y$ in \eqref{claim-complicate} and the fact that $\overline{v}^\varepsilon=0$). Recall finally that $c^\varepsilon \geq c_m >0$ for some $c_m$ independent of $\varepsilon \leq \varepsilon_0$ (again by \eqref{claim-complicate}). With all these facts in hands, it is clear from Theorem \ref{T-well-posed}\eqref{Linfty-esti} that 
$$
0 \leq u^\varepsilon(x,y) \leq \tilde{C} \,e^{\tilde{C} \, x} \quad \mbox{on} \quad \{x \leq v^\varepsilon(y)\},
$$ 
for some $\tilde{C}>0$ independent of $\varepsilon \leq \varepsilon_0$. The rest of the proof of \eqref{claim-integrability} is easy from the choice of the extension \eqref{eps-extension}.
\end{proof}

We can now identify $u^0$.
\begin{lemma}
\label{Lemme-donnant-u0} Let $c^0>0$ and $u^0 \in C_b(\R)$, $u_0=u_0(x)$, be given by the preceding lemmas. Then for all $x \in \R$,
\begin{equation*}
u^{0}(x)=\frac{\overline{g}}{\overline{b}} \exp \left(-\frac{c^{0} \, \overline{b}
\, |x|}{\overline{a}}\right),
\end{equation*}
and where the speed $c^0$ will be identified later. 
\end{lemma}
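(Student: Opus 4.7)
The plan is to identify $u^0$ by deriving an effective one-dimensional equation satisfied by $u^0$ on $(-\infty,0)$, together with boundary data coming from a global flux balance, and then to invoke the exponential decay and evenness already known for $u^0$.

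\textbf{Step 1 (interior ODE).} I would first show that $u^0$ solves
$
c^0\,\overline{b}\,(u^0)'-\overline{a}\,(u^0)''=0
$
in the distributional sense on $(-\infty,0)$. To this end, fix $m>0$ and $\theta\in C^{\infty}_{c}((-\infty,-m))$. For $\varepsilon$ small enough, $\|v^{\varepsilon}\|_{\infty}<m$, hence $\theta(v^{\varepsilon}(y))=0$, and the test function $w(x,y):=\theta(x)$ in the periodic weak formulation of Definition \ref{defn-eps-existence} gives
\begin{equation*}
\int_{\Omega^{\varepsilon}_{\per}}\bigl(c^{\varepsilon}\,b^{\varepsilon}\,u^{\varepsilon}_{x}\,\theta+a^{\varepsilon}\,u^{\varepsilon}_{x}\,\theta'\bigr)\,\dif x\,\dif y=0.
\end{equation*}
Integrating by parts in $x$ (no boundary terms, since $\theta$ has compact support in the fresh region) and using Fubini leads to
\begin{equation*}
\int_{\R}\theta'(x)\left[\frac{c^{\varepsilon}}{\varepsilon}\int_{0}^{\varepsilon}b(y/\varepsilon)\,u^{\varepsilon}(x,y)\,\dif y\right]\dif x+\int_{\R}\theta''(x)\left[\frac{1}{\varepsilon}\int_{0}^{\varepsilon}a(y/\varepsilon)\,u^{\varepsilon}(x,y)\,\dif y\right]\dif x=0.
\end{equation*}
The uniform convergence $u^{\varepsilon}\to u^{0}$ from Lemma \ref{homogenization-compacity} together with the exact averaging $\frac{1}{\varepsilon}\int_{0}^{\varepsilon}b(y/\varepsilon)\,\dif y=\overline{b}$ implies that the inner integrals converge uniformly in $x$ on compact sets to $\overline{b}\,u^{0}(x)$ and $\overline{a}\,u^{0}(x)$, respectively. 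Passing to the limit gives the desired ODE on $(-\infty,-m)$; since $m>0$ is arbitrary and $u^{0}$ is continuous, the relation holds on $(-\infty,0)$.

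\textbf{Step 2 (integration of the ODE).} The general solution is $u^0(x)=A+B\exp\bigl(\tfrac{c^0\,\overline{b}}{\overline{a}}x\bigr)$, and the decay $u^{0}(x)\to 0$ as $x\to-\infty$ from Lemma \ref{convergence-properties-u0} forces $A=0$. Consequently,
\begin{equation*}
u^{0}(x)=u^{0}(0)\,\exp\!\left(\frac{c^{0}\,\overline{b}}{\overline{a}}\,x\right)\quad\text{for every }x\le 0,
\end{equation*}
so it only remains to compute $u^{0}(0)$.

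\textbf{Step 3 (boundary value via global flux).} To pin down $u^0(0)$, I would integrate the PDE $c^{\varepsilon}b^{\varepsilon}u^{\varepsilon}_{x}=\Div(a^{\varepsilon}\nabla u^{\varepsilon})$ over a truncated periodic cell $\Omega^{\varepsilon}_{\per}\cap\{x>-R\}$, apply the divergence theorem, use the Neumann condition on $\Gamma^{\varepsilon}_{\per}$ together with the $\varepsilon$-periodicity on the lateral boundaries. This yields
\begin{equation*}
c^{\varepsilon}\int_{0}^{\varepsilon}b(y/\varepsilon)\bigl[u^{\varepsilon}(v^{\varepsilon}(y),y)-u^{\varepsilon}(-R,y)\bigr]\dif y=c^{\varepsilon}\,\varepsilon\,\overline{g}-\int_{0}^{\varepsilon}a(y/\varepsilon)\,u^{\varepsilon}_{x}(-R,y)\,\dif y.
\end{equation*}
The left-hand side at the front is independent of $R$; the $x$-averaging trick on the right-hand side over $R\in[R_{0},2R_{0}]$, combined with the exponential $L^{\infty}$ decay of $u^{\varepsilon}$ from Theorem \ref{T-well-posed}\eqref{Linfty-esti} (which is uniform in $\varepsilon$, using the lower bound on $c^\varepsilon$ and the uniform bound on $\|v^\varepsilon\|_\infty$ from Lemma \ref{homogenization-compacity}), shows that the right boundary contribution vanishes as $R\to+\infty$. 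Hence $\frac{1}{\varepsilon}\int_{0}^{\varepsilon}b(y/\varepsilon)\,u^{\varepsilon}(v^{\varepsilon}(y),y)\,\dif y=\overline{g}$. Letting $\varepsilon\downarrow 0$ and using the Hölder continuity of $u^{\varepsilon}$ (Theorem \ref{Holder-regularity}) together with $v^{\varepsilon}\to 0$ and $u^{\varepsilon}\to u^{0}$ uniformly gives $\overline{b}\,u^{0}(0)=\overline{g}$, i.e.\ $u^{0}(0)=\overline{g}/\overline{b}$. Combined with Step 2 and the evenness of $u^{0}$ built into the reflection extension \eqref{eps-extension}, this yields the announced formula for all $x\in\R$.

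The main technical nuisance will be Step 3: justifying that the far-field flux contribution at $x=-R$ really drops out in the limit $R\to+\infty$, uniformly in $\varepsilon$, requires the uniform exponential decay estimates collected via the $\varepsilon$-independent lower bound on $c^{\varepsilon}$ and upper bound on $\|v^{\varepsilon}\|_{\infty}$. The remaining passages to the limit in Steps 1 and 3 are standard given the uniform convergence of $u^{\varepsilon}$ and the exact averaging of the (oscillating) coefficients.
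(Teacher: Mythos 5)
Your proposal is correct, but it follows a genuinely different route from the paper's. The paper tests the $\varepsilon$-formulation with an arbitrary $\varphi=\varphi(x)\in C_c^\infty(\R)$ over $\llcorner \frac{1}{\varepsilon}\lrcorner$ periods and passes to the limit in all terms at once, arriving at the full homogenized weak formulation \eqref{PV-homog}, which encodes both the interior equation and the boundary behaviour at $x=0$; the multiplicative constant is then found by injecting the explicit exponential back into \eqref{PV-homog}. You instead decouple the two tasks: test functions supported away from the front give the interior ODE with no boundary terms at all, and the boundary value $u^0(0)=\overline{g}/\overline{b}$ comes from a flux balance on a truncated cell, which in fact produces the exact identity $\frac{1}{\varepsilon}\int_0^\varepsilon b(y/\varepsilon)\,u^\varepsilon(v^\varepsilon(y),y)\,\dif y=\overline{g}$ for every $\varepsilon$, before any limit is taken. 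This is slightly more elementary and makes the origin of the constant very transparent; the paper's route has the advantage of exhibiting the homogenized problem \eqref{homogenized-T-eqn} in weak form, which it reuses later (e.g.\ in Appendix \ref{app-strong-conv}). Both arguments rest on the same inputs: uniform convergence of $u^\varepsilon$ and $v^\varepsilon$, the $y$-independence of $u^0$, exact averaging of the oscillating coefficients, the decay at $-\infty$ and the evenness coming from the reflection extension.

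One point to tighten in Step 3: since $u^\varepsilon$ is only a variational solution with bounded measurable coefficients, you should not apply the divergence theorem and the Neumann condition pointwise on the sharp truncation $\{x>-R\}$. The clean way (which is the rigorous version of your averaging trick) is to take $w(x,y)=\chi_R(x)$ in Definition \ref{defn-eps-existence}, with $\chi_R=1$ for $x\ge -R$ and $\chi_R=0$ for $x\le -2R$, and then let $R\to+\infty$ using the exponential bound \eqref{Linfty-esti} and $\nabla u^\varepsilon\in L^2$ to discard the terms involving $\chi_R'$; this directly yields your exact flux identity, and the passage $\varepsilon\downarrow 0$ then follows from the uniform convergence $u^\varepsilon(v^\varepsilon(\cdot),\cdot)\to u^0(0)$ already exploited in Lemma \ref{lem-corrector}. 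With that adjustment your Steps 1--3 give exactly the stated formula.
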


\begin{proof}
Let us pass to the limit in the equation satisfied by $u^{\varepsilon}$ (along the sequence given by Lemma \ref{homogenization-compacity}). Recall that we have 
\begin{equation*}
\int_{0}^{\varepsilon} \int_{-\infty}^{v^{\varepsilon}(y)} \left(  c^{\varepsilon
}\,b^{\varepsilon}\,u_{x}^{\varepsilon}\,w+a^{\varepsilon}\,\nabla
u^{\varepsilon}\,\nabla w\right) \dif x \, \dif y =\int_{x=v^{\varepsilon}%
(y),0<y<\varepsilon}\frac{c^{\varepsilon}\,g^{\varepsilon}}{\sqrt
{1+v_{y}^{\varepsilon}}}\,w,
\end{equation*}
for all $w\in H_{\per}^{1}(\Omega^{\varepsilon}),$ see Definition
\ref{defn-eps-existence}. By the periodicity of $u^{\varepsilon}$ (and $w$), we can rewrite this equality as
\begin{multline*}
\int_{0}^{l_{\varepsilon}} \int_{-\infty}^{v^{\varepsilon}(y)}c^{\varepsilon
}\,b^{\varepsilon}\,u_{x}^{\varepsilon}\,w \, \dif x \, \dif y 
+\int_{0}^{l_{\varepsilon}}%
\int_{-\infty}^{v^{\varepsilon}(y)}a^{\varepsilon}\,\nabla u^{\varepsilon
}\,\nabla w \, \dif x \, \dif y \\
=\int_{x=v^{\varepsilon}(y),0<y<l_{\varepsilon}}\frac
{c^{\varepsilon}\,g^{\varepsilon}}{\sqrt{1+v_{y}^{\varepsilon}}}%
\,w,
\end{multline*}
where $l_{\varepsilon}=\llcorner\frac{1}{\varepsilon}\lrcorner \, \varepsilon$. Note that
$
l_{\varepsilon} \to 1$ as $\varepsilon\downarrow 0.$ 
It is in this latter equation that we are going to pass to the limit by proceeding term by term. Note that we can restrict ourselves to $w$ of the form $w(x,y)=\varphi(x)$ with an arbitrary
$\varphi\in C_{c}^{\infty}(\mathbb{R})$. This is motivated by the fact that the limit $u^0$ is already known to be independent of $y$, see Lemma \ref{homogenization-limit-de-v}. We thus have to pass to the limit in the equation:
\begin{equation}
\label{homog-eqn-uepsbis}%
\begin{split}
0 & =  \int_{0}^{l_{\varepsilon}} \int_{-\infty}^{v^{\varepsilon}(y)}c^{\varepsilon
}\,b\left(\frac{y}{\varepsilon}\right) u_{x}^{\varepsilon}\, \varphi \, \dif x \, \dif y \\
& \quad +\int_{0}^{l_{\varepsilon}}%
\int_{-\infty}^{v^{\varepsilon}(y)} a\left(\frac{y}{\varepsilon}\right) u_x^{\varepsilon
}  \, \varphi_x \, \dif x \, \dif y\\
&  \quad -\int_{0}^{l_{\varepsilon}} c^{\varepsilon}\,g\left(\frac{y}{\varepsilon}\right) \varphi(v^\varepsilon(y)) \, \dif y\\
& =: I_1+I_2+I_3.
\end{split}
\end{equation}

{\bf 1.} \textit{The first term.} We have
\begin{equation}\label{form-I1}
\begin{split}
I_{1}= & -\int_{\R^2} c^{\varepsilon}\,b\left(\frac{y}{\varepsilon}\right) u^{\varepsilon}\,\varphi_{x} \, \mathbf{1}_{x<v^\varepsilon(y),0<y<l_\varepsilon} \, \dif x \, \dif y\\
 & \quad +\int_{\R} c^{\varepsilon}\,b\left(\frac{y}{\varepsilon}\right) u^{\varepsilon}(v^{\varepsilon}(y),y) \, \varphi(v^{\varepsilon}(y)) \, \mathbf{1}_{0<y<l_\varepsilon}\, \dif y,
\end{split}
\end{equation}
thanks to an integration by parts in $x$ and a rewriting of the obtained integrals with indicator functions. From classical results and the preceding lemmas, we have:
\begin{equation*}
\begin{cases}
\mbox{$b\left(\frac{\cdot}{\varepsilon} \right) \rightharpoonup \overline{b}$ in $L^{\infty}(\R)$ weak-$\star$,}\\
\mbox{$c^{\varepsilon} \to c^0$, $v^\varepsilon \to 0$ uniformly on $\R$,}\\
\mbox{and $u^\varepsilon \to u^0$ uniformly on $\R^2$},
\end{cases}
\end{equation*}
as $\varepsilon \downarrow 0$. In particular, we also have:
$$
\begin{cases}
\mbox{$\mathbf{1}_{x<v^\varepsilon(y),0<y<l_\varepsilon} \rightarrow \mathbf{1}_{x<0,0<y<1}$ in $L_\loc^1(\mathbb{R}_x \times \R_y)$},\\
\mbox{$\mathbf{1}_{0<y<l_\varepsilon} \to \mathbf{1}_{0<y<1}$ in $L^1_{\loc}(\R_y)$,}\\
\mbox{and $u^{\varepsilon}(v^{\varepsilon}(\cdot),\cdot) \to u^0(0)$ uniformly on $\R$.} 
\end{cases}
$$ 
To show the above convergence for the indicator functions, it suffices to use again the arguments of the proof of Lemma \ref{stability}. Let us now pass to the limit in $I_1$, which is possible by weak-strong convergence arguments. We get:
\begin{equation*}
\begin{split}
\lim_{\varepsilon \downarrow 0} I_1
& = -\int_{-\infty}^{0} \int_{0}^1 c^{0}\,\overline{b}
\, u^{0}(x) \, \varphi_{x}(x) \, \dif y \, \dif x +\int_{0}^1  c^{0}\,\overline{b}\,\,u^{0}%
(0) \, \varphi(0) \, \dif y\\
& = -\int_{-\infty}^{0} c^{0}\,\overline{b}
\, u^{0}(x) \, \varphi_{x}(x) \, \dif x +c^{0}\,\overline{b}\,u^{0}(0) \, \varphi(0).
\end{split}
\end{equation*}

{\bf 2.} \textit{The second and third terms.} The term $I_2$ and $I_3$ are of the same form as the integrals in \eqref{form-I1}. One can verify that the same reasoning leads to  
\begin{equation*}
\lim_{\varepsilon \downarrow 0} I_{2} =  -\int_{-\infty}^{0} \overline{a} \, u^{0}(x) \, \varphi_{xx}(x) \, \dif x+\overline{a} \,u^{0}(0) \, \varphi_x(0)
\end{equation*}
and
\begin{equation*}
\lim_{\varepsilon \downarrow 0} I_3=- \int_0^1 c^{0} \, \overline{g} \, \varphi(0) \, \dif y=-c^{0} \, \overline{g} \, \varphi(0).
\end{equation*}

{\bf 3.} {\it Conclusion.} Finally in the
limit $\varepsilon \downarrow 0,$ Equation $(\ref{homog-eqn-uepsbis})$
becomes
\begin{equation}
\int_{-\infty}^{0} \left(c^{0} \, \overline{b} \, u^{0} \, \varphi_x+\overline{a} \,
u^{0} \, \varphi_{xx} \right) \dif x=c^{0} \left(\overline{b} \, u^0(0)- \overline{g} \right) \varphi(0)+\overline{a} \, u^0(0) \, \varphi_x(0)\label{PV-homog}
\end{equation}
for all $\varphi\in C_{c}^{\infty}(\mathbb{R})$.
We recognize the weak formulation of Problem \eqref{homogenized-T-eqn} and, from that, it is quite standard to identify $u^0$. Let us give some details for completeness.

By taking $\varphi$ with compact support in $\{x<0\}$ in \eqref{PV-homog}, we obtain that
\begin{equation}
c^{0} \, \overline{b} \, u_x^{0}-\overline{a} \, u_{xx}^{0}=0 \quad \text{for} \quad x<0
\label{EqhomD'}
\end{equation}
(in the distribution sense).
Thus
$
u^0(x)=C \exp \left( \frac{c^{0} \, \overline{b} \,
x}{\overline{a}}\right)+\tilde{C}
$
for all $x<0$.
Using that $u^0$ is continuous on $\R$, even, and tends to zero at infinity, see Lemma \ref{convergence-properties-u0}, we infer that $\tilde{C}=0$ and
$$
u^0(x)=C \exp \left( \frac{c^{0} \, \overline{b} \,
|x|}{\overline{a}}\right) \quad \forall x \in \R.
$$
Injecting this formula in \eqref{PV-homog} and integrating by parts, 
\begin{equation}\label{eq-for-boundary}
\int_{-\infty}^{0} \underbrace{\left(c^{0} \, \overline{b} \, u_{x}^{0}-\overline{a}
\, u_{xx}^{0} \right)}_{=0} \varphi \, \dif x+\overline{a} \underbrace{u_{x}^{0}(0)}_{=C  \, \frac{c^{0} \, \overline{b}}{\overline{a}}} \varphi
(0)=c^{0} \, \overline{g} \, \varphi(0),
\end{equation}
for all $\varphi \in C_c^\infty(\R)$. This implies that $C=\frac{\overline{g}}{\overline{b}}$ and completes the proof.
\end{proof}

\begin{remark}
As noted previously, $u^0$ satisfies the homogenized problem 
$$
\begin{cases}
c^{0} \, \overline{b} \, u_x^{0}-\overline{a} \, u_{xx}^{0}=0, \quad x<0,\\
\overline{a} \, u^0_x(0)=c^0 \, \overline{g},
\end{cases}
$$
derived in \eqref{EqhomD'} and \eqref{eq-for-boundary}.
\end{remark}

To identify $c^0$, we will roughly speaking homogenize the front's equation \eqref{eps-front-eqn-bis} (with $\mu=\mu(\varepsilon)$ for Theorem \ref{Theorem-homogenization-lambda}). To do so, it will be convenient to rewrite \eqref{eps-front-eqn-bis} as 
\begin{equation}
-c^{\varepsilon}+R\left(z,u^{\varepsilon}(v^{\varepsilon} (\varepsilon
\, z),\varepsilon \, z)\right) \sqrt{1+(w_{z}^{\varepsilon})^{2}}=\frac{\mu(\varepsilon
)}{\varepsilon} \, \frac{w_{zz}^{\varepsilon}}{1+(w_{z}^{\varepsilon})^{2}%
},\label{Homog-eqn-weps}%
\end{equation}
thanks to the change of variables $z=\frac{y}{\varepsilon}$ and
\begin{equation}\label{ansatz}
w^{\varepsilon}(z):=\frac{v^{\varepsilon}(\varepsilon \, z)}{\varepsilon}.
\end{equation}
We shall see later that the limit $w$ of $w^\varepsilon$ is such that $v^\varepsilon(y) \approx \varepsilon \, w \left(\frac{y}{\varepsilon}\right)$. Here are some technical properties that will be needed to identify $c^0$. 

\begin{lemma}\label{lem-corrector}
Let $\{\varepsilon_n\}_n$ be the sequence given by Lemma \ref{homogenization-compacity}. Then:
\begin{enumerate}[label={{\rm (\roman*)}}]
\item\label{conv-combustion-corrector} The function $R \left(\cdot,u^{\varepsilon_n}(v^{\varepsilon_n} (\varepsilon_n
\, \cdot),\varepsilon_n \, \cdot)\right) \to R \left( \cdot,\frac{\overline{g}}{\overline{b}}\right)$ in $L^1(0,1)$ as $n \to +\infty$.
\item\label{bound-corrector} The sequence $\{w^{\varepsilon_n}\}_{n}$ is bounded in $W^{1,\infty}(\R)$.
\item\label{bound-lambda-finite} The latter sequence is bounded in $W^{2,\infty}(\R)$ if $\lambda =\lim_{\varepsilon \downarrow 0} \frac{\mu(\varepsilon)}{\varepsilon} > 0$. 
\end{enumerate}
\end{lemma}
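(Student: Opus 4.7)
The plan is to treat the three claims separately, exploiting the uniform convergences already established in Lemmas \ref{homogenization-compacity}--\ref{Lemme-donnant-u0} and the change of variables $z = y/\varepsilon$.

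For item \ref{conv-combustion-corrector}, I would set $T^{\varepsilon_n}(z) := u^{\varepsilon_n}(v^{\varepsilon_n}(\varepsilon_n z),\varepsilon_n z)$ and first show that $T^{\varepsilon_n} \to \overline{g}/\overline{b}$ uniformly on $\mathbb{R}$. This follows at once from the uniform convergence $v^{\varepsilon_n} \to 0$ on $\mathbb{R}$ and $u^{\varepsilon_n} \to u^0$ on $\mathbb{R}^2$ (Lemma \ref{homogenization-compacity}), together with $u^0(0) = \overline{g}/\overline{b}$ given by Lemma \ref{Lemme-donnant-u0}. Using hypothesis \ref{hyp-arrh-1}, for almost every $z \in (0,1)$ the map $T \mapsto R(z,T)$ is continuous, hence $R(z,T^{\varepsilon_n}(z)) \to R(z,\overline{g}/\overline{b})$ pointwise a.e. Since $R \le R_M$ by \ref{hyp-arrh-3}, the dominated convergence theorem yields the claimed $L^1(0,1)$ convergence.

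For item \ref{bound-corrector}, I compute $w^{\varepsilon}_z(z) = v^{\varepsilon}_y(\varepsilon z)$, so $\|w^\varepsilon_z\|_\infty = \|v^\varepsilon_y\|_\infty$, and this is bounded uniformly in $n$ by the equicontinuity of $\{v^\varepsilon\}$ established in \eqref{claim-complicate}. Moreover, by the substitution $y = \varepsilon z$,
\[
\overline{w^{\varepsilon}} = \int_0^1 w^{\varepsilon}(z)\,\dif z = \frac{1}{\varepsilon^2}\int_0^\varepsilon v^{\varepsilon}(y)\,\dif y = \frac{\overline{v^{\varepsilon}}}{\varepsilon} = 0
\]
by the normalization $\overline{v^{\varepsilon}} = 0$. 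Combining this mean-zero property with the $1$-periodicity of $w^{\varepsilon}$ and the Lipschitz bound on $w^\varepsilon_z$ gives $\|w^{\varepsilon_n}\|_\infty \le \|w^{\varepsilon_n}_z\|_\infty$, whence the $W^{1,\infty}$ bound.

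For item \ref{bound-lambda-finite}, I rearrange \eqref{Homog-eqn-weps} as
\[
w^{\varepsilon}_{zz} = \frac{\varepsilon}{\mu(\varepsilon)}\Bigl(-c^{\varepsilon} + R\bigl(z,T^{\varepsilon}(z)\bigr)\sqrt{1+(w^{\varepsilon}_z)^2}\Bigr)\bigl(1+(w^{\varepsilon}_z)^2\bigr).
\]
If $\lambda > 0$ then for $n$ large enough $\varepsilon_n/\mu(\varepsilon_n) \le 2/\lambda$. Since $c^{\varepsilon_n}$ is bounded (by Lemma \ref{homogenization-compacity}), $R \le R_M$, and $w^{\varepsilon_n}_z$ is already bounded by item \ref{bound-corrector}, the right-hand side is bounded uniformly in $n$, giving the $W^{2,\infty}$ bound. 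The main obstacle is checking item \ref{conv-combustion-corrector}, but only mildly so: one must be careful that $R$ is only a Carathéodory function (not jointly continuous in $(z,T)$), which is why uniform convergence of $T^{\varepsilon_n}$ is combined with the almost-everywhere continuity of $R(z,\cdot)$ rather than a direct continuous-substitution argument.
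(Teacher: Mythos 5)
Your proof is correct, and items \ref{conv-combustion-corrector} and \ref{bound-lambda-finite} follow the paper's argument essentially verbatim (uniform convergence of the trace to $u^0(0)=\overline{g}/\overline{b}$, pointwise convergence of $R(z,\cdot)$ by \ref{hyp-arrh-1}, domination by $R_M$, then reading the second derivative off \eqref{Homog-eqn-weps}). Where you diverge is item \ref{bound-corrector}: the paper re-applies Theorem \ref{front-well-posed}\eqref{front-esti} to the rescaled equation \eqref{Homog-eqn-weps}, which forces it to first establish a positive lower bound $R\left(z,u^{\varepsilon_n}(v^{\varepsilon_n}(\varepsilon_n z),\varepsilon_n z)\right)\geq \essinf_z R\left(z,\frac{\overline{g}}{2\,\overline{b}}\right)>0$ for large $n$ via \ref{hyp-arrh-1} and \ref{hyp-arrh-4}; you instead quote the uniform bound on $\|v^{\varepsilon}_y\|_\infty$ already secured in the proof of Lemma \ref{homogenization-compacity}. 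That shortcut is legitimate --- the bound there comes from Lemma \ref{estimates-tw} with constants independent of $\varepsilon$, and the paper itself later invokes ``the bound on $v^\varepsilon_y$ in \eqref{claim-complicate}'' --- but be precise about what you are citing: \emph{equicontinuity} of $\{v^\varepsilon\}$, as literally stated in \eqref{claim-complicate}, does not by itself yield a uniform $L^\infty$ bound on the derivatives; you should appeal to the uniform gradient estimate of Lemma \ref{estimates-tw} (or to the paper's route via \eqref{front-esti}). The zero-mean computation $\int_0^1 w^{\varepsilon}=\frac{1}{\varepsilon^2}\int_0^\varepsilon v^{\varepsilon}=0$ and the resulting $\|w^{\varepsilon_n}\|_\infty\leq\|w^{\varepsilon_n}_z\|_\infty$ match the paper. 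One cosmetic slip in item \ref{bound-lambda-finite}: when $\lambda=+\infty$ the inequality $\varepsilon_n/\mu(\varepsilon_n)\leq 2/\lambda$ reads $\leq 0$ and is false; simply say that $\varepsilon_n/\mu(\varepsilon_n)$ converges to $1/\lambda\in[0,+\infty)$ and is therefore bounded.
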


\begin{proof}
Let us start with \ref{conv-combustion-corrector}. By Lemma \ref{homogenization-compacity}, 
$$
u^{\varepsilon_n}(v^{\varepsilon_n}(\varepsilon \, z),\varepsilon \, z) \to u^0(0)
$$ 
uniformly in $z$; recall indeed that $v^0 \equiv 0$ and $u^0=u^0(x)$ by Lemma \ref{homogenization-limit-de-v}. By the identification of $u^0$ in Lemma \ref{Lemme-donnant-u0}, we also know that $u^0(0)=\frac{\overline{g}}{\overline{b}}$. Thus by \ref{hyp-arrh-1},
$$
R \left(z,u^{\varepsilon_n}(v^{\varepsilon_n}(\varepsilon_n \, z),\varepsilon_n \, z)\right)\to
R \left(z,\frac{\overline{g}}{\overline{b}}\right)
$$ 
for almost every $z \in \R$. Using in addition that $R$ is bounded by \ref{hyp-arrh-3}, the dominated convergence theorem implies the desired convergence in \ref{conv-combustion-corrector}. 

For the second item, we will apply Theorem \ref{front-well-posed} to Equation \eqref{Homog-eqn-weps}. Due to the uniform convergence of $u^{\varepsilon_n}(v^{\varepsilon_n}(\varepsilon_n
\, z),z)$ to $u^0(0)$, we know that for all $z$ and sufficiently large values of $n$, we have for example 
$$
u^{\varepsilon_n}(v^{\varepsilon_n}(\varepsilon_n
\, z),z) \geq \frac{u^0(0)}{2} =\frac{\overline{g}}{2 \, \overline{b}}>0.
$$
The assumptions \ref{hyp-arrh-1} and \ref{hyp-arrh-4} then imply that
\begin{equation*}
\essinf_z R\left(z,u^{\varepsilon_n}(v^{\varepsilon_n}(\varepsilon_n
\, z),\varepsilon_n \, z)\right) \geq \essinf_z R\left(z,\frac{\overline{g}}{2 \, \overline{b}}\right)>0.
\end{equation*}
Hence, using also the other bound 
$$
\essinf_z R\left(z,u^{\varepsilon_n}(v^{\varepsilon_n}(\varepsilon_n
\, z),\varepsilon_n \, z)\right) \leq R_M
$$
from \ref{hyp-arrh-3}, Estimate \eqref{front-esti} implies that the sequence of derivatives $\{w_z^{\varepsilon_n}\}_n$ is bounded by some $C$ in $L^\infty(\R)$. To get some bound on the antiderivatives, we note that each $w^{\varepsilon_n}$ is $1$-periodic with $\int_0^1 w^{\varepsilon_n}=\frac{1}{\varepsilon_n^2} \int_0^{\varepsilon_n} v^{\varepsilon_n}=0$ thanks to \eqref{ansatz}. This implies that $\|w^{\varepsilon_n}\|_\infty \leq \|w^{\varepsilon_n}_z\|_\infty \leq C$ and the proof of the item \ref{bound-corrector} is complete.

The item \ref{bound-lambda-finite} immediately follows from the latter item and Equation \eqref{Homog-eqn-weps}. 
\end{proof}

We can now prove Theorems \ref{Theorem-homogenization} and \ref{Theorem-homogenization-lambda}. We will denote again the sequence $\{\varepsilon_n\}_n$ given by Lemma \ref{homogenization-compacity}, or any of its subsequences, simply by $\{\varepsilon\}$. 

\begin{proof}[Proof of Theorem \ref{Theorem-homogenization}]
By Lemmas \ref{homogenization-compacity}--\ref{Lemme-donnant-u0}, we have already proved the convergence 
of $v^{\varepsilon}$ and $u^{\varepsilon}$ towards the desired limits (at least along the above sequence). It
therefore remains to identify $c^{0}$. 

For this sake, let us integrate Equation \eqref{Homog-eqn-weps}
between $0$ and $1$ (thus with $\mu$ fixed here). We get
\begin{equation*}
-c^{\varepsilon} +\int_{0}^{1}R \left(z,u^{\varepsilon}(v^{\varepsilon
}(\varepsilon \, z),\varepsilon \, z)\right) \sqrt{1+(w_{z}^{\varepsilon})^{2}} \, \dif y=\frac{\mu}{\varepsilon} \left[ \arctan(w_{z}%
^{\varepsilon})\right]_{0}^{1}.
\end{equation*}
As the right-hand side vanishes due to the $1$-periodicity of $w^\varepsilon$, we have
\begin{equation}
c^{\varepsilon} =\int_{0}^{1} R \left(z,u^{\varepsilon}(v^{\varepsilon
}(\varepsilon \, z),\varepsilon \, z)\right) \sqrt{1+(w_z^{\varepsilon})^{2}} \, \dif z.\label{Efrontintegral}%
\end{equation}
Note then that $\|w_z^\varepsilon\|_\infty=\|v_y^\varepsilon\|_\infty$ and recall from Lemma \ref{bound-front} that 
$$
\|v^{\varepsilon}_{y}\|_{\infty}\leq\tan\left(  \frac{2 \, c^{\varepsilon}\,\varepsilon}{\mu}\right).
$$
This implies that $w_{z}^{\varepsilon} \to 0$ uniformly as $\varepsilon
\downarrow 0$. Therefore in the limit $\varepsilon \downarrow 0$, \eqref{Efrontintegral} and Lemma \ref{lem-corrector}\ref{conv-combustion-corrector} lead to
$$
c^\varepsilon \to \int_{0}^{1} R \left(z,\frac{\overline{g}}{\overline{b}} \right) \dif z.
$$
This identifies the limiting speed $c^0$. 

To conclude, we have established the convergence of the triplet $(c^\varepsilon,v^\varepsilon,u^\varepsilon)$ towards
$$
c^0= \int_0^1 R\left( z,\frac{\overline{g}}{\overline{b}}\right) \dif z, \quad v^0 \equiv 0 \quad \mbox{and} \quad u^0=u^0(x)=\frac{\overline{g}}{\overline{b}} \, \exp \left(-\frac{c^{0} \, \overline{b}
\, |x|}{\overline{a}}\right), 
$$
along some sequence $\{\varepsilon_n\}_n$ converging to zero. But as the limiting triplet is uniquely determined, the convergence holds for the whole family $\varepsilon \downarrow 0$. This implies the desired result and completes the proof.  
\end{proof}

\begin{proof}[Proof of Theorem \ref{Theorem-homogenization-lambda}]
Here too it remains to identify $c^{0}$. The new difficulty is that we may no longer have the strong convergence of $w_z^\varepsilon$ towards zero. This will complicate the passage to the limit in \eqref{Efrontintegral}. Note also that, as above, the identification of $c^0$ will automatically gives us the convergence of the whole family as $\varepsilon \downarrow 0$. We thus continue to argue along some particular sequence (simply denoted by $\{\varepsilon\}$).

\medskip

{\bf 1.} {\it The case $\lambda=+\infty$.} This is the only case for which $w^\varepsilon_z$ still strongly converges towards zero. Indeed by \eqref{ansatz} and the estimate of Lemma
\ref{bound-front}, we have
\begin{equation}\label{error-estimate}
\|w^\varepsilon_z\|_\infty =\| v_{y}^{\varepsilon}\|_{\infty}\leq\tan\left(  \frac{2 \, c^{\varepsilon}\,\varepsilon}{\mu(\varepsilon)}\right),
\end{equation}
and as $\frac{\varepsilon}{\mu(\varepsilon)}$ goes to zero as $\varepsilon\downarrow 0$, the preceding arguments still apply to give $c^{0}=\int_0^1 R\left(z,\frac{\overline{g}}{\overline{b}}\right) \dif z.$

\medskip

{\bf 2.} {\it The case $\lambda\in (0,+\infty)$.} In that case we need to identify the limit $w$ of $w^\varepsilon$, defined in \eqref{ansatz}, and the equation it satisfies. We have seen in Lemma \ref{lem-corrector}\ref{bound-lambda-finite} that the function $w^\varepsilon$ remains bounded in $W^{2,\infty}(\R)$ as $\varepsilon \downarrow 0$ (at least along the sequence given by Lemma \ref{homogenization-compacity}). By Ascoli-Arz\'ela theorem, it then converges towards some $w$ in $W^{1,\infty}(\R)$  (up to some subsequence). This limit $w$ is necessarily in $W^{2,\infty}(\R)$ and is also $1$-periodic. To identify the equation in $w$, let us rewrite \eqref{Homog-eqn-weps} as
$$
w^\varepsilon_{zz}=\frac{\varepsilon}{\mu(\varepsilon)} \left\{1+(w_{z}^{\varepsilon})^{2}\right\}
\left\{-c^{\varepsilon}+R(z,u^{\varepsilon}(v^{\varepsilon}(\varepsilon
\, z),\varepsilon \, z)) \sqrt{1+(w_{z}^{\varepsilon})^{2}}\right\}.
$$
Using Lemma \ref{lem-corrector}\ref{conv-combustion-corrector}, we infer that $w^{\varepsilon}_{zz}(\cdot)$ converges in $L^1(0,1)$ towards
$$
\frac{1}{\lambda} \left\{1+w_{z}^{2}(\cdot)\right\}
\left\{-c^0+R \left(\cdot,\frac{\overline{g}}{\overline{b}}\right)\sqrt{1+w_{z}^{2}(\cdot)}\right\}.
$$
But this function is necessarily $w_{zz}(\cdot)$ by uniqueness of the distributional limit. Hence $w$ satisfies
\begin{equation*}
-c^{0}+R \left(z,\frac{\overline{g}}{\overline{b}}\right)\sqrt{1+w_{z}^{2}}=\lambda \, \frac{w_{zz}}%
{1+w_{z}^{2}} \quad \mbox{almost everywhere.}
\end{equation*}

We then conclude by Theorem \ref{front-well-posed} which gives the existence of a unique real $c^0$ such that the above equation admits a $1$-periodic solution $w \in W^{2,\infty}(\R)$. Note that the $c^0$ thus identified will depend on $\lambda$.   

\medskip

{\bf 3.} {\it The case $\lambda=0$.} We must show that $c^0=\esssup_z R\left(z,\frac{\overline{g}}{\overline{b}}\right)$. Let us start by applying Theorem \ref{front-well-posed}\eqref{front-esti} to the front's equation \eqref{eps-front-eqn-bis}. We get
$$
c^\varepsilon \leq \esssup_{y \in \R} \Big\{R \left(\frac{y}{\varepsilon},u^\varepsilon(v^\varepsilon(y),y)\right) \Big\}.
$$
Using \ref{hyp-arrh-1}, we infer that
$$
c^\varepsilon \leq \esssup_{z \in \R} R \left(z,T^\varepsilon\right),
$$ 
with $T^\varepsilon:=\max_y u^\varepsilon(v^\varepsilon(y),y)$.
Recalling that $c^\varepsilon \to c^0$ and $u^\varepsilon(v^\varepsilon(\cdot),\cdot) \to u^0(0)=\frac{\overline{g}}{\overline{b}}$ uniformly on $\R$, we have $T^\varepsilon \to \frac{\overline{g}}{\overline{b}}$ and thus
$$
c^0 \leq \limsup_{T \to \frac{\overline{g}}{\overline{b}}} \left\{\esssup_z R \left(z,T\right)\right\}.
$$
We conclude that $c^0 \leq \esssup_z R \left(z,\frac{\overline{g}}{\overline{b}}\right)$ by using \eqref{hyp-param-5}.

To prove the inequality in the other direction, we consider Equation \eqref{Homog-eqn-weps}. Given any nonnegative $\varphi \in C^\infty_c(\R)$, we multiply \eqref{Homog-eqn-weps} by $\varphi$ and integrate the right-hand side by part. We get that
\begin{equation*}
\int_\R \left\{-c^{\varepsilon}+R\left(z,u^{\varepsilon}(v^{\varepsilon}(\varepsilon
\, z),\varepsilon \, z)\right) \sqrt{1+(w_{z}^{\varepsilon})^{2}}\right\} \varphi \, \dif z=-\frac{\mu(\varepsilon
)}{\varepsilon} \int_\R \varphi_z \arctan \left(w^\varepsilon_z\right).
\end{equation*}
From Lemma \ref{lem-corrector}\ref{conv-combustion-corrector}--\ref{bound-corrector} and the fact that $\frac{\mu(\varepsilon
)}{\varepsilon} \to 0$ as $\varepsilon \downarrow 0$, we easily deduce that
$$
c^0 \int_\R \varphi\geq \int_\R R \left(z,\frac{\overline{g}}{\overline{b}}\right) \varphi(z) \, \dif z,
$$
at the limit.
Since the nonnegative test function $\varphi$ is arbitrary, $c^0 \geq \esssup_z R \left(z,\frac{\overline{g}}{\overline{b}}\right)$ and the proof is complete. 
\end{proof}

\section{Monotonicity of the homogenized speed}\label{monotonicity}

In this section, we consider the qualitative analysis of the speed $c^0$ as defined by Theorem \ref{Theorem-homogenization-lambda}. Let us recall that it depends on $\lambda = \lim_{\varepsilon \downarrow 0} \frac{\mu(\varepsilon)}{\varepsilon}$. Our main result will be that 
$
\lambda \mapsto c^0(\lambda)
$ 
is monotonous. 

\subsection{Main result}

For brevity, we will denote $c^0(\lambda)$ simply by $c(\lambda)$ all along this section. Its derivative in $\lambda$ will be denoted by $c'(\lambda)$. The gradient of the corrector $w=w(z)$ given by Theorem \ref{Theorem-homogenization-lambda}\ref{corrector} will be denoted by $h=w_z$. This function satisfies the problem:
\begin{equation}
\begin{cases}
-c+\mathscr{R}(z) \sqrt{1+h^{2}}=\lambda \, \frac{h_{z}}{1+h^{2}},\\[1ex]
h(z)=h(z+1),\\[1ex] 
\int_{0}^{1} h(t) \, \dif t=0,
\end{cases}
\label{Eqn1-Cmonotone}
\end{equation}
for almost every $z \in \R$, where hereafter 
$$
\mathscr{R}(z):=R\left(z, \frac{\overline{g}}{\overline{b}}\right).
$$ 
Under the assumptions of Theorem \ref{Theorem-homogenization-lambda}, we have:
\begin{equation}\label{hyp-geometric}
\mbox{The function $\mathscr{R} \in L^\infty(\R)$ is $1$-periodic and positively lower bounded.}  
\end{equation}
This is the only property that we will need. For any $\lambda \in \R^+$, there then exists a unique $(c,h) \in \R \times W^{1,\infty}(\R)$ satisfying \eqref{Eqn1-Cmonotone}, thanks to Theorem \ref{front-well-posed}. This defines a mapping
\begin{equation}\label{correction-def-c}
\lambda \in \R^+ \mapsto \left(c(\lambda),h(\cdot,\lambda) \right) \in \R \times W_\diese^{1, \infty}(\R)
\end{equation}
(the subscript ``$\diese$'' being used for the $1$-periodicity). Here is the main result of this section.

\begin{theorem}
\label{Theorem-C-monotone} Assume \eqref{hyp-geometric}. Then $\lambda \mapsto c(\lambda)$ (defined as above) is $C^\infty$ and nonincreasing. At the limits, it satisfies:
\begin{equation*}
\lim_{\lambda \downarrow 0} c(\lambda)=\esssup_z \mathscr{R}(z)\quad \mbox{and} \quad \lim_{\lambda \to +\infty} c(\lambda)=\int_0^1 \mathscr{R}(z) \, \dif z.
\end{equation*}
More precisely $c'(\lambda)<0$ for all $\lambda >0$ as soon as $\mathscr{R}$ is not a constant.
\end{theorem}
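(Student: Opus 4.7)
The plan is to exploit one pivotal identity obtained by multiplying $(\ref{Eqn1-Cmonotone})_1$ by $h$ and integrating over $[0,1]$. Because $h=v_z$ with $v$ periodic forces $\int_0^1 h = 0$ automatically, and because $\lambda h h_z/(1+h^2) = \tfrac{\lambda}{2}(\ln(1+h^2))_z$ integrates to zero, one finds
\begin{equation*}
\int_0^1 \mathscr{R}(z)\,h(z)\,\sqrt{1+h^2(z)}\,\dif z \;=\; 0. \tag{$\star$}
\end{equation*}
This identity will drive both the implicit function argument for smoothness and the sign computation for monotonicity.

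For the smoothness assertion, I would apply the implicit function theorem to
\begin{equation*}
F:\R^+\times\R\times\Bigl\{h\in W^{1,\infty}_{\diese}(\R):\textstyle\int_0^1 h=0\Bigr\}\longrightarrow L^\infty_{\diese}(\R),\quad F(\lambda,c,h):=-c+\mathscr{R}\sqrt{1+h^2}-\lambda\,\frac{h_z}{1+h^2}.
\end{equation*}
At the unique solution $(c_0,h_0)$ of $F(\lambda_0,\cdot,\cdot)=0$, the linearization $D_{(c,h)}F(\gamma,k)$ is a first-order linear ODE in $k$ coupled with the scalar $\gamma$ and subject to $\int_0^1 k=0$. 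A direct calculation shows that the mean of the coefficient multiplying $k$ equals precisely $\tfrac{1}{\lambda_0}\int_0^1\mathscr{R}\,h_0\sqrt{1+h_0^2}\,\dif z$, which vanishes thanks to $(\star)$. This puts us in the borderline Fredholm case: the integrating factor is itself periodic and strictly positive, a compatibility condition uniquely determines $\gamma$ in terms of the data, and the one-dimensional kernel of the homogeneous problem is pinned down by the mean-zero constraint on $k$. The resulting isomorphism onto $L^\infty_{\diese}(\R)$ delivers a $C^\infty$ branch $\lambda\mapsto(c(\lambda),h(\cdot,\lambda))$.

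For the monotonicity, I pass to the variable $\theta:=\arctan h\in(-\pi/2,\pi/2)$, which recasts \eqref{Eqn1-Cmonotone} as $\lambda\theta_z=\mathscr{R}(z)\sec\theta-c$ with $\theta$ $1$-periodic. Differentiating in $\lambda$ and setting $q:=\theta_\lambda$ produces the linear first-order ODE
\begin{equation*}
\lambda q_z - A(z)\,q \;=\; -\theta_z - c'(\lambda),\qquad A(z):=\mathscr{R}(z)\sin\theta(z)\sec^2\theta(z)=\mathscr{R}(z)\,h(z)\sqrt{1+h^2(z)}.
\end{equation*}
Identity $(\star)$ yields $\int_0^1 A=0$, so the integrating factor $G(z):=\exp\bigl(-\int_0^z A/\lambda\bigr)$ is itself $1$-periodic and strictly positive. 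The periodicity of $q$ then forces
\begin{equation*}
c'(\lambda)\int_0^1 G\,\dif z \;=\; -\int_0^1 \theta_z\, G\,\dif z \;=\; -\frac{1}{\lambda}\int_0^1 \mathscr{R}(z)\,\theta(z)\sin\theta(z)\sec^2\theta(z)\,G(z)\,\dif z,
\end{equation*}
the second equality coming from integration by parts in $z$ using $G_z=-AG/\lambda$ and the joint periodicity of $\theta$ and $G$. Since $\theta\sin\theta\geq 0$ on $(-\pi/2,\pi/2)$ and the remaining factors are strictly positive, we deduce $c'(\lambda)\leq 0$; equality forces $\theta\equiv 0$, hence $h\equiv 0$, and substituting back in $(\ref{Eqn1-Cmonotone})_1$ yields $\mathscr{R}\equiv c$. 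Contrapositively, $c'(\lambda)<0$ for every $\lambda>0$ whenever $\mathscr{R}$ is nonconstant.

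For the limits, the case $\lambda\to+\infty$ is immediate: Lemma \ref{bound-front} gives $\|h(\cdot,\lambda)\|_\infty\leq\tan\bigl(2c(\lambda)/\lambda\bigr)\to 0$ since $c(\lambda)\leq\esssup\mathscr{R}$ by Theorem \ref{front-well-posed}, so passing to the limit in the integrated form $c(\lambda)=\int_0^1\mathscr{R}\sqrt{1+h^2(\cdot,\lambda)}\,\dif z$ delivers $\lim_{\lambda\to+\infty}c(\lambda)=\int_0^1\mathscr{R}$. For $\lambda\downarrow 0$, the monotonicity already established ensures the existence of $L:=\lim_{\lambda\downarrow 0}c(\lambda)\leq\esssup\mathscr{R}$, and I would argue by contradiction that $L=\esssup\mathscr{R}$: otherwise, picking $\delta>0$ and a Lebesgue density-$1$ point $z_0$ of $\{\mathscr{R}>L+\delta\}$, the ODE $\lambda\theta_z=\mathscr{R}\sec\theta-c$ would force $\theta_z\gtrsim 1/\lambda$ on a portion of arbitrarily high density around $z_0$, making $\theta(z_0+\rho)-\theta(z_0-\rho)$ blow up as $\lambda\downarrow 0$ in contradiction with $\theta\in(-\pi/2,\pi/2)$. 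The main technical obstacle I anticipate is the rigorous Fredholm analysis for the degenerate linearization in the IFT step, which is precisely what identity $(\star)$ is designed to unlock.
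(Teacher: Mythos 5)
Your proposal is correct, and for the core of the theorem it follows essentially the same route as the paper: the zero-mean identity $(\star)$ is exactly Lemma \ref{lemme-tech-monotonie} ($\int_0^1\widetilde{\mathscr{R}}_0=0$ with $\widetilde{\mathscr{R}}_0=\mathscr{R}\,h_0\sqrt{1+h_0^2}$), your implicit-function setup is the paper's $\phi$ on the same Banach spaces, and your "borderline Fredholm" analysis of the linearization is what Lemma \ref{Lemma3-C-monotone} carries out explicitly by variation of constants (the periodic integrating factor, the compatibility condition fixing the scalar, the mean-zero constraint fixing the kernel). For the monotonicity you differentiate the equation in $\lambda$ after the change of variable $\theta=\arctan h$; the paper instead extracts $c'(\lambda_0)$ from the IFT formula $\dif\varphi=-[\dif_{c,h}\phi]^{-1}\circ\dif_\lambda\phi$, but the two computations coincide, since your $q=\theta_\lambda$ is precisely the paper's $\widetilde{\mathscr{H}}=\mathscr{H}/(1+h_0^2)$ and your $G$ is the paper's exponential weight $\exp\bigl(\tfrac{1}{\lambda_0}\int_z^1\widetilde{\mathscr{R}}_0\bigr)$; your direct differentiation is legitimate only because your IFT step already furnishes the $C^\infty$ branch $\lambda\mapsto(c,h)$ in $\R\times W^{1,\infty}$, which is the point the paper makes when it says it does not differentiate \eqref{Eqn1-Cmonotone} "directly." The sign argument (nonnegativity of $\theta\sin\theta$, i.e.\ of $h_0\arctan h_0$, and strictness when $\mathscr{R}$ is nonconstant) and the $\lambda\to+\infty$ limit via $\|h\|_\infty\le\tan(2c(\lambda)/\lambda)\to0$ and $c(\lambda)=\int_0^1\mathscr{R}\sqrt{1+h^2}$ are the paper's arguments verbatim. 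The one genuine difference is the limit $\lambda\downarrow0$: the paper only cites \cite{ChNa97}, whereas you give a self-contained contradiction argument at a Lebesgue density-$1$ point of $\{\mathscr{R}>L+\delta\}$. That argument does work, but make the cancellation explicit: on the complement of this set one only has $\lambda\theta_z\ge-c(\lambda)\ge-\mathscr{R}_M$, i.e.\ a negative contribution of the same order $1/\lambda$ as the positive one, so you must choose $\rho$ with $|I_\rho\setminus E|/|I_\rho|$ small compared with $\delta/\mathscr{R}_M$ before sending $\lambda\downarrow0$ (and use $c(\lambda)\le L$, which follows from the monotonicity you have already proved) to force $\theta(z_0+\rho)-\theta(z_0-\rho)\ge\tfrac{\delta\rho}{2\lambda}\to+\infty$, contradicting $|\theta|<\pi/2$. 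With that detail spelled out, your treatment of this limit is a nice self-contained alternative to the citation.
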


\begin{remark}\label{rem-adherence-value}
As a corollary of the above theorem, the speed $c^0=c^0(\lambda)$ of Theorem~\ref{Theorem-homogenization-lambda} defines a continuous map of the form:
$$
\lambda \in [0,+\infty] \mapsto c^0(\lambda) \in \left[\esssup_z R \left(z,\frac{\overline{g}}{\overline{b}} \right),\int_0^1 R\left(z,\frac{\overline{g}}{\overline{b}} \right) \dif z \right].
$$ 
This map is smooth in $(0,+\infty)$, and it is decreasing and bijective whenever $R$ is not constant. In that case, it is in particular injective and the convergence stated in Theorem \ref{Theorem-homogenization-lambda} holds if and only if the limit $\lambda=\lim_{\varepsilon \downarrow 0} \frac{\mu(\varepsilon)}{\varepsilon}$ exists. 
\end{remark}

The limit as $\lambda \downarrow 0$ has been established in \cite{ChNa97}. To the best of our knowledge, the other properties are new. The proof of the monotonicity is the most difficult. We do not know how to prove it from a direct differentiation of \eqref{Eqn1-Cmonotone} with respect to $\lambda$. We will call instead the implicit function theorem that will give us an easy-to-use formula of $c'(\lambda)$. The rest of this section is devoted to the proof of Theorem \ref{Theorem-C-monotone}.

\subsection{Proof}

We will start by stating a version of the implicit function theorem in Banach spaces which we will use.

Let then $E_1,$ $E_2$ and $F$ be some given Banach spaces, $\mathcal{O}$ an open subset of $E_1 \times E_2$, and $\phi:\mathcal{O} \subseteq E_1 \times E_2 \to F$ a function. Also recall that:

\begin{definition}
The function $\phi$ is differentiable at $(x_1,x_2)$ if there is a bounded linear map $T:E_1 \times E_2 \to F$ such that for all $(h_1,h_2) \in E_1 \times E_2$,
$$
\phi(x_1+h_1,x_2+h_2)=\phi(x_1,x_2)+T(h_1,h_2)+\co(h_1,h_2)
$$
where  
$
\frac{\|\co(h_1,h_2)\|_F}{\|(h_1,h_2)\|_{E_1 \times E_2}} \to 0
$ 
as $\|(h_1,h_2)\|_{E_1 \times E_2} \to 0$. 
\end{definition}

In that case $T$ is unique and defined as the differential of $\phi$ at $(x_1,x_2)$. The partial differential with respect to $x_1$ and $x_2$ are defined as the maps $T_1:h_1 \mapsto T(h_1,0)$ and $T_2:h_2 \mapsto T(0,h_2)$. Throughout we shall denote them by 
$$
\dif \phi(x_1,x_2):=T \quad \mbox{and}  \quad \dif_{x_i} \phi(x_1,x_2):=T_i,
$$ 
respectively. To avoid confusion between $(x_1,x_2)$ and $(h_1,h_2)$, we shall use the standard notation
$$
\dif \phi(x_1,x_2) \cdot (h_1,h_2) := \dif \phi(x_1,x_2) (h_1,h_2) \in F
$$ 
(with similar notations for the partial differentials). This defines a map 
\begin{equation*}
\dif \phi:E_1 \times E_2 \to \mathcal{L}(E_1 \times E_{2},F),
\end{equation*}
where 
\begin{equation*}
\mathcal{L}(E_1 \times E_{2},F):= \Big\{T:E_1 \times E_2 \to F \mbox{ linear and bounded} \Big\}.
\end{equation*}
Proceeding as before, we can define the second order differential, etc. For further details, see for instance \cite{Car67}. In the implicit function theorem below, we shall use the following notation:
$$
\mbox{Isom} (E_2,F):=\Big\{T \in \mathcal{L}(E_{2},F) \mbox{ such that $T$ is bijective} \Big\}.
$$ 

Here is the theorem.

\begin{theorem}[see for instance \cite{Car67}]\label{thm-implicit}
Let $\phi$ be as above and $(x_1^0,x_2^0) \in \mathcal{O}$ be such that 
\begin{equation*}
\phi(x_1^0,x_2^0)=0.
\end{equation*}
Let us also assume that $\phi \in C^1(\mathcal{O})$ with 
\begin{equation}
\label{cond-isom}
\dif_{x_2} \phi (x_1^0,x_2^0) \in {\rm Isom}(E_2,F).
\end{equation}
Then:
\begin{enumerate}[label={{\rm (\roman*)}}]
\item\label{formula-implicit} There are open sets $U \subseteq E_1$ and $V \subseteq E_2$ and a function $\varphi:U \to V$ such that $(x_1^0,x_2^0) \in U \times V \subseteq \mathcal{O}$ and  
$$
\Big[ \phi(x_1,x_2)=0 \Leftrightarrow x_2=\varphi(x_1) \Big] \quad \forall (x_1,x_2) \in U \times V.
$$
\item\label{formula-byproduct} Moreover $\varphi \in C^1(U)$ and 
\begin{equation*}
\dif \varphi(x_2^0)=-\left[\dif_{x_2} \phi(x_1^0,x_2^0)\right]^{-1} \circ \dif_{x_1} \phi(x_1^0,x_2^0).
\end{equation*}
\end{enumerate}
\end{theorem}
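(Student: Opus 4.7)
The plan is to reduce the solvability of $\phi(x_1,x_2)=0$ near $(x_1^0,x_2^0)$ to a fixed point problem and apply the Banach contraction principle. I would set $T:=\dif_{x_2} \phi(x_1^0,x_2^0) \in {\rm Isom}(E_2,F)$, whose inverse is continuous by the open mapping theorem, and define the auxiliary map
$$
\Psi(x_1,x_2):=x_2-T^{-1} \phi(x_1,x_2),
$$
so that $\phi(x_1,x_2)=0$ if and only if $x_2$ is a fixed point of $\Psi(x_1,\cdot)$. A direct computation yields $\dif_{x_2} \Psi(x_1,x_2)=I_{E_2}-T^{-1} \circ \dif_{x_2} \phi(x_1,x_2)$, which vanishes at $(x_1^0,x_2^0)$. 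Since $\phi \in C^1(\mathcal{O})$, there exist $r>0$ and an open neighborhood $U$ of $x_1^0$ such that $\|\dif_{x_2} \Psi(x_1,x_2)\|_{\mathcal{L}(E_2,E_2)} \leq \tfrac{1}{2}$ on $U \times \overline{B}(x_2^0,r) \subseteq \mathcal{O}$.

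Next I would apply Banach's fixed point theorem. The mean value inequality in Banach spaces yields the uniform contraction estimate $\|\Psi(x_1,x_2)-\Psi(x_1,\tilde{x}_2)\|_{E_2} \leq \tfrac{1}{2}\,\|x_2-\tilde{x}_2\|_{E_2}$ for $x_1 \in U$ and $x_2,\tilde{x}_2 \in \overline{B}(x_2^0,r)$. The self-mapping property is obtained by shrinking $U$ further if necessary: since $\Psi(x_1^0,x_2^0)=x_2^0$ and $\Psi$ is continuous in $x_1$, one can arrange $\|\Psi(x_1,x_2^0)-x_2^0\|_{E_2} \leq r/2$ for all $x_1 \in U$, which combined with the contraction estimate implies $\Psi(x_1,\cdot) \left(\overline{B}(x_2^0,r)\right) \subseteq \overline{B}(x_2^0,r)$. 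This produces a unique fixed point $\varphi(x_1)$ and establishes item \ref{formula-implicit} with $V:=B(x_2^0,r)$ (a minor refinement of the radii ensures $\varphi(x_1) \in V$). Moreover, using $\varphi(x_1)-\varphi(\tilde{x}_1)=\Psi(x_1,\varphi(x_1))-\Psi(\tilde{x}_1,\varphi(\tilde{x}_1))$, the contraction estimate and the Lipschitz continuity of $\Psi$ in $x_1$ (which comes from $\phi \in C^1$) imply that $\varphi$ is locally Lipschitz on $U$.

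For item \ref{formula-byproduct}, I would first shrink $U$ once more, if necessary, so that $\dif_{x_2} \phi(x_1,x_2)$ remains in ${\rm Isom}(E_2,F)$ on $U \times V$, using that ${\rm Isom}(E_2,F)$ is open in $\mathcal{L}(E_2,F)$ and that $\dif_{x_2} \phi$ is continuous. Then, fixing $\bar x_1 \in U$ and writing Taylor's formula for $\phi$ at $(\bar x_1,\varphi(\bar x_1))$,
\begin{multline*}
0=\phi(\bar x_1+h_1,\varphi(\bar x_1+h_1))-\phi(\bar x_1,\varphi(\bar x_1))\\
=\dif \phi(\bar x_1,\varphi(\bar x_1)) \cdot \bigl(h_1,\varphi(\bar x_1+h_1)-\varphi(\bar x_1)\bigr)+\co\bigl(\|(h_1,\varphi(\bar x_1+h_1)-\varphi(\bar x_1))\|\bigr),
\end{multline*}
I would solve for $\varphi(\bar x_1+h_1)-\varphi(\bar x_1)$ by applying the inverse of $\dif_{x_2} \phi(\bar x_1,\varphi(\bar x_1))$. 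Using the local Lipschitz continuity of $\varphi$ established above, the little-$o$ term is of order $\co(\|h_1\|)$, which yields both the differentiability of $\varphi$ at $\bar x_1$ and the announced formula. Finally, the continuity of $\dif \phi$, of composition, and of inversion on ${\rm Isom}(E_2,F)$, combined with the continuity of $\varphi$, ensures that $\dif \varphi$ is continuous on $U$, so $\varphi \in C^1(U)$.

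The main obstacle is this differentiability step: one must carefully extract the little-$o$ remainder in the joint Taylor expansion of $\phi$ and use the Lipschitz bound on $\varphi$ to absorb the unknown quantity $\|\varphi(\bar x_1+h_1)-\varphi(\bar x_1)\|$ into $\cO(\|h_1\|)$, so that the remainder genuinely behaves as $\co(\|h_1\|)$ and the formula for $\dif \varphi(\bar x_1)$ can be read off.
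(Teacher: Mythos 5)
Your proof is correct; since the paper does not prove Theorem \ref{thm-implicit} but simply cites it from the literature, there is no internal argument to compare against, and your Banach fixed-point construction of $\varphi$ (contraction $\Psi(x_1,\cdot)$, local Lipschitz bound, then Taylor expansion plus openness of ${\rm Isom}(E_2,F)$ to get differentiability and the formula) is exactly the classical proof given in the cited reference. No gaps: the absorption of the remainder via the Lipschitz estimate on $\varphi$, which you flag as the delicate step, is handled correctly.
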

In the sequel we shall use \ref{formula-byproduct} to compute $c'(\lambda)$. Before we need to give a few lemmas. Let us first precise the Banach spaces and the function $\phi$ which we will take in the frame of our problem. To this end, we consider $\left(\lambda,(c,h)\right)$ as free variables living in the following Banach spaces:
\begin{equation*}
E_{1}:=\mathbb{R}_{\lambda } \quad \mbox{and} \quad E_{2}:=\mathbb{R}_{c}\times E,
\end{equation*}
where
\begin{equation}\label{correction-def-E}
E:=\left\{h\in W_\diese^{1,\infty }(\mathbb{R}_{z}) \text{ such that }
\int_{0}^{1}h(z) \, \dif z=0\right\}.
\end{equation}
(For the sake of clarity, we have added some subscripts to the real space $\R$ in order to remember which variable is considered.) 
We then define the function $\phi$ as:
\begin{equation}
\label{def-phi}
\begin{array}{ccccl}
\phi  & : & E_{1}\times E_{2} & \to  & F \\ 
&  & \left(\lambda ,(c,h)\right) & \mapsto  & \phi\left(\lambda ,(c,h)\right):z \mapsto -c+\mathscr{R}
(z)\sqrt{1+h^{2}(z)}-\lambda \, \frac{h_{z}(z)}{1+h^{2}(z)},
\end{array}
\end{equation}
with the arrival space
\begin{equation*}
F:=L_\diese^{\infty }(\mathbb{R}_{z}).
\end{equation*}
All these spaces are endowed with their usual norms: The absolute value for $E_1$, the $\|\cdot\|_{W^{1,\infty}}$-norm for $E$, etc. Our first lemma is a computation of the partial differential of $\phi$ with respect to the $(c,h)$-variable.
%
%
%
%
%

\begin{lemma}
\label{Lemma2-C-monotone}
Assume \eqref{hyp-geometric}. 
Then the function $\phi$ as defined in \eqref{def-phi} is $C^\infty$. Moreover for all  $\lambda \in E_1$, $(c,h) \in E_{2}$ and $(\mathscr{C},\mathscr{H}) \in E_{2},$ 
\begin{equation*}
\dif_{c,h} \phi \left(\lambda,(c,h)\right) \cdot (\mathscr{C},\mathscr{H})=-\mathscr{C}+\widetilde{\mathscr{R}} \, \widetilde{\mathscr{H}}%
-\lambda \, \widetilde{\mathscr{H}}_{z},
\end{equation*}
where $\widetilde{\mathscr{R}}=\mathscr{R} \, h \sqrt{1+h^{2}}$ and $
\widetilde{\mathscr{H}}=\frac{\mathscr{H}}{1+h^{2}}.$
\end{lemma}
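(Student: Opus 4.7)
The plan is to decompose $\phi$ into elementary maps between Banach spaces whose smoothness is standard, and then identify the differential at $(\lambda,(c,h))$ by a first-order Taylor expansion in $(\mathscr{C},\mathscr{H})$.

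For the smoothness, I would split $\phi$ into three pieces: the affine term $(c,h)\mapsto -c$, the ``curvature-free'' term $\mathscr{R}(z)\sqrt{1+h^{2}}$, and the ``curvature'' term $-\lambda\,h_{z}/(1+h^{2})$. The first is clearly bounded linear from $E_{2}$ into $F$. For the other two I would combine the following standard facts: (i) since every $h\in E$ is bounded, the Nemytskii operators $h\mapsto \sqrt{1+h^{2}}$ and $h\mapsto 1/(1+h^{2})$ are $C^{\infty}$ from $W^{1,\infty}_{\diese}(\R)$ to $L^{\infty}_{\diese}(\R)$, because $t\mapsto \sqrt{1+t^{2}}$ and $t\mapsto (1+t^{2})^{-1}$ are real-analytic with all derivatives uniformly bounded on any bounded interval of $\R$; (ii) the differentiation $h\mapsto h_{z}$ is bounded linear from $W^{1,\infty}_{\diese}(\R)$ to $L^{\infty}_{\diese}(\R)$; (iii) pointwise multiplication $L^{\infty}\times L^{\infty}\to L^{\infty}$ is bounded bilinear, hence $C^{\infty}$; (iv) multiplication by the fixed $\mathscr{R}\in L^{\infty}_{\diese}(\R)$ is bounded linear, and multiplication by the scalar $\lambda$ is bilinear in $(\lambda,\cdot)$. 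Composing these operations yields $\phi\in C^{\infty}(E_{1}\times E_{2};F)$ together with explicit expressions for every derivative.

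For the differential formula at a fixed $(\lambda,(c,h))$, I would Taylor-expand the two nonlinear Nemytskii pieces,
\begin{equation*}
\sqrt{1+(h+\mathscr{H})^{2}}=\sqrt{1+h^{2}}+\frac{h\,\mathscr{H}}{\sqrt{1+h^{2}}}+r_{1},
\end{equation*}
\begin{equation*}
\frac{(h+\mathscr{H})_{z}}{1+(h+\mathscr{H})^{2}}=\frac{h_{z}}{1+h^{2}}+\frac{\mathscr{H}_{z}}{1+h^{2}}-\frac{2\,h\,h_{z}\,\mathscr{H}}{(1+h^{2})^{2}}+r_{2},
\end{equation*}
where $\|r_{1}\|_{\infty}$ and $\|r_{2}\|_{\infty}$ are $o(\|\mathscr{H}\|_{W^{1,\infty}})$ by the smoothness above. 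Adding the trivial $-\mathscr{C}$ contribution from the affine piece and collecting the linear parts gives
\begin{equation*}
\dif_{c,h}\phi\bigl(\lambda,(c,h)\bigr)\cdot(\mathscr{C},\mathscr{H})=-\mathscr{C}+\mathscr{R}\,\frac{h\,\mathscr{H}}{\sqrt{1+h^{2}}}-\lambda\left(\frac{\mathscr{H}_{z}}{1+h^{2}}-\frac{2\,h\,h_{z}\,\mathscr{H}}{(1+h^{2})^{2}}\right).
\end{equation*}
A direct check with $\widetilde{\mathscr{R}}=\mathscr{R}\,h\sqrt{1+h^{2}}$ and $\widetilde{\mathscr{H}}=\mathscr{H}/(1+h^{2})$ confirms that the middle term equals $\widetilde{\mathscr{R}}\,\widetilde{\mathscr{H}}$ and that the parenthesis equals $\widetilde{\mathscr{H}}_{z}$, delivering the announced formula.

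I do not anticipate any genuine obstacle: the whole argument hinges on the uniform-on-bounded-sets boundedness of the derivatives of the scalar functions $t\mapsto\sqrt{1+t^{2}}$ and $t\mapsto (1+t^{2})^{-1}$, which automatically controls the sup-norm of the Taylor remainders and simultaneously justifies both the $C^{\infty}$ claim and the $o(\cdot)$ estimates used in the linearization.
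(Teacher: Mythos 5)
Your proposal is correct and follows essentially the same route as the paper: both split $\phi$ into the affine part $-c$, the term $\mathscr{R}\sqrt{1+h^{2}}$ and the term $-\lambda\,h_{z}/(1+h^{2})$, obtain smoothness by composing the smooth scalar maps $r\mapsto\sqrt{1+r^{2}}$, $r\mapsto(1+r^{2})^{-1}$ with the bounded linear/bilinear operations on $W^{1,\infty}_{\diese}$ and $L^{\infty}_{\diese}$, and then identify the linear part, recognizing $\frac{\mathscr{H}_{z}}{1+h^{2}}-\frac{2\,h\,h_{z}\,\mathscr{H}}{(1+h^{2})^{2}}=\widetilde{\mathscr{H}}_{z}$. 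The only cosmetic difference is that you compute the full first-order Fr\'echet expansion with $o(\|\mathscr{H}\|_{W^{1,\infty}})$ remainders, whereas the paper evaluates the directional limit $t\downarrow 0$ in $L^{\infty}$; both yield the same formula.
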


\begin{proof}
The function $\phi$ is smooth as composition of smooth functions---notice that $h \in W^{1,\infty} \mapsto h_z \in L^\infty$ is smooth as it is a bounded and linear map. 
To differentiate $\phi$, we consider the useful formula:
\begin{equation*}
\dif_{c,h}\phi \left(\lambda,(c,h)\right) \cdot (\mathscr{C},\mathscr{H})=\lim_{t \downarrow 0}\frac{\phi (\lambda
,(c+t \, \mathscr{C},h+t \, \mathscr{H}))-\phi (\lambda ,(c,h))}{t}.
\end{equation*}
Recall that here: $c$ and $\mathscr{C}$ are reals; $h$ and $\mathscr{H}$ are functions in $W^{1,\infty}_\diese(\R_z)$; and the limit has to be taken in $F$ that is to say strongly in $L^\infty_\diese(\R_z)$. Let us set $I=I(z)$ equal to the quotient above and let us compute its limit in $L^\infty_\diese(\R_z)$. Setting 
\begin{equation*}
F(r):=\sqrt{1+r^{2}} \quad \text{and} \quad G(r):=\frac{1}{1+r^{2}}
\end{equation*}
for any real $r$,
we can write 
$$
\phi \left(\lambda,(c,h)\right)=-c+\mathscr{R} \, F(h)-\lambda \, G(h) \, h_z.
$$
Injecting this in $I$ and rearranging the terms, we get
\begin{eqnarray*}
I =-\mathscr{C}+\mathscr{R} \, \frac{F(h+t \, \mathscr{H})-F(h)}{t}-\lambda \, \frac{G(h+t \, \mathscr{H})(h+t \, \mathscr{H})_{z}-G(h) \, h_{z}}{t}.
\end{eqnarray*}
Now it is easy to see that the quotient in $F$ goes to $F'(h) \, \mathscr{H}=\frac{h}{\sqrt{1+h^{2}}} \, \mathscr{H}$ as $t \downarrow 0$ and in $L_\diese^\infty(\R_z)$. As concerning the
quotient in $G,$ by rewriting it as 
\begin{equation*}
h_{z} \, \frac{G(h+t \, \mathscr{H})-G(h)}{t}+G(h+t \, \mathscr{H}) \, \mathscr{H}_{z},
\end{equation*}
we see that its limit is $h_{z} \, G'(h) \, \mathscr{H}+G(h) \, \mathscr{H}_{z}=\left\{G(h) \, \mathscr{H}\right\}_z$. Finally in the limit $t \downarrow 0$, we get 
\begin{equation*}
\dif_{c,h}\phi \left(\lambda,(c,h)\right) \cdot (\mathscr{C},\mathscr{H})=-\mathscr{C}+\frac{\mathscr{R} \, h}{\sqrt{1+h^{2}}} \, \mathscr{H}-\lambda \left\{
\frac{\mathscr{H}}{1+h^{2}}\right\}_z.
\end{equation*}
This is the desired formula with $\widetilde{\mathscr{R}}$ and $\widetilde{\mathscr{H}}$ defined as in the lemma.
\end{proof}

Our next lemma will serve to verify Condition \eqref{cond-isom} of Theorem \ref{thm-implicit}.
%

\begin{lemma}
\label{Lemma3-C-monotone}
Assume \eqref{hyp-geometric} and let $\lambda_0 >0$, $c_0=c(\lambda_0)$ and $h_0(z)=h(z,\lambda_0)$ be as defined by \eqref{correction-def-c}. Then $(c_0,h_0) \in E_2$ and for any $f \in F$, there is a unique $(\mathscr{C},\mathscr{H}) \in E_2$ such that
\begin{equation*}
\dif_{c,h} \phi \left(c_0,(\lambda_0,h_0) \right) \cdot (\mathscr{C},\mathscr{H})=f.
\end{equation*}
Moreover $\mathscr{C}$ is given by: 
\begin{equation}
\mathscr{C}=-\frac{\int_{0}^{1} f(z ) \exp \left(\frac{1}{\lambda_0}\int_{z}^{1}
\widetilde{\mathscr{R}}_0(t) \, \dif t\right) \dif z}{
\int_{0}^{1}\exp \left(\frac{1}{\lambda_0}\int_{z}^{1}\widetilde{\mathscr{R}}_0
(t) \, \dif t \right) \dif z}  \label{EqndeC-Cmonotone}
\end{equation}
where $\widetilde{\mathscr{R}}_0=\mathscr{R} \, h_0 \sqrt{1+h_0^{2}}$.
\end{lemma}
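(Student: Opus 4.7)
The condition $(c_0,h_0)\in E_{2}$ is immediate from the definitions \eqref{correction-def-c}--\eqref{correction-def-E}, since by \eqref{Eqn1-Cmonotone} we have $h_0\in W^{1,\infty}_{\diese}(\R)$ and $\int_0^1 h_0\,dz=0$. The plan is then to view the equation $d_{c,h}\phi(\lambda_0,(c_0,h_0))\cdot(\mathscr{C},\mathscr{H})=f$, rewritten via Lemma \ref{Lemma2-C-monotone} and the change of unknown $\widetilde{\mathscr{H}}:=\mathscr{H}/(1+h_0^2)$, as the first-order linear ODE
\begin{equation*}
\lambda_0\,\widetilde{\mathscr{H}}_z-\widetilde{\mathscr{R}}_0\,\widetilde{\mathscr{H}}=-(f+\mathscr{C}),
\end{equation*}
to be solved in $(\mathscr{C},\widetilde{\mathscr{H}})\in\R\times W^{1,\infty}_{\diese}(\R)$ under the constraint $\int_0^1 (1+h_0^2)\,\widetilde{\mathscr{H}}\,dz=0$ (equivalent to $\int_0^1 \mathscr{H}\,dz=0$). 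Periodicity of $\widetilde{\mathscr{H}}$ is equivalent to that of $\mathscr{H}$ since $1+h_0^2$ is $1$-periodic and bounded away from zero, and this substitution preserves the $W^{1,\infty}$ class.

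The decisive identity I would establish is $\int_0^1 \widetilde{\mathscr{R}}_0\,dz=0$: multiplying the equation satisfied by $h_0$ (see \eqref{Eqn1-Cmonotone}) by $h_0$ and integrating yields
\begin{equation*}
-c_0\int_0^1 h_0\,dz+\int_0^1 \mathscr{R}\,h_0\sqrt{1+h_0^2}\,dz=\lambda_0\int_0^1 \frac{h_0\,h_{0,z}}{1+h_0^2}\,dz=\frac{\lambda_0}{2}\bigl[\ln(1+h_0^2)\bigr]_0^1=0,
\end{equation*}
where the right-hand side vanishes by periodicity of $h_0$ and the first term on the left vanishes because $h_0\in E$. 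Consequently the integrating factor $M(z):=\exp\bigl(-\tfrac{1}{\lambda_0}\int_0^z\widetilde{\mathscr{R}}_0(t)\,dt\bigr)$ is itself $1$-periodic, and moreover coincides with $\exp\bigl(\tfrac{1}{\lambda_0}\int_z^1\widetilde{\mathscr{R}}_0(t)\,dt\bigr)$, which is exactly the weight appearing in \eqref{EqndeC-Cmonotone}.

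The ODE now reads $(M\,\widetilde{\mathscr{H}})_z=-\tfrac{1}{\lambda_0}\,M(f+\mathscr{C})$. Integrating over $[0,1]$ and using the periodicity of $M\,\widetilde{\mathscr{H}}$ gives the solvability condition $\int_0^1 M(f+\mathscr{C})\,dz=0$, which uniquely determines $\mathscr{C}$ by \eqref{EqndeC-Cmonotone}---the denominator $\int_0^1 M\,dz$ is positive since $M>0$. Conversely, with this $\mathscr{C}$ the right-hand side has zero mean, and the general periodic solution reads $\widetilde{\mathscr{H}}(z)=\bigl(K+G(z)\bigr)/M(z)$ with $G(z):=-\tfrac{1}{\lambda_0}\int_0^z M(f+\mathscr{C})\,dt$ periodic and $K\in\R$ a free parameter. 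I would then plug this into the mean-zero constraint $\int_0^1(1+h_0^2)\,\widetilde{\mathscr{H}}\,dz=0$, which becomes a scalar linear equation for $K$ whose coefficient $\int_0^1 (1+h_0^2)/M\,dz$ is strictly positive (the integrand is uniformly bounded below); this uniquely fixes $K$ and hence $\mathscr{H}$. The main obstacle is clearly the identity $\int \widetilde{\mathscr{R}}_0=0$: without it the integrating factor would fail to be periodic, and the Fredholm alternative that cleanly produces both $\mathscr{C}$ and $\mathscr{H}$ would collapse.
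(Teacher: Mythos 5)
Your proof is correct and follows essentially the same route as the paper's: the reduction via Lemma \ref{Lemma2-C-monotone} to a first-order linear ODE in $\widetilde{\mathscr{H}}=\mathscr{H}/(1+h_0^2)$, the key identity $\int_0^1\widetilde{\mathscr{R}}_0=0$ obtained by multiplying \eqref{Eqn1-Cmonotone} by $h_0$ and integrating, the periodicity (solvability) condition yielding \eqref{EqndeC-Cmonotone}, and the mean-zero constraint fixing the remaining free constant. Phrasing the solution through the periodic integrating factor $M$ rather than the explicit variation-of-constants formula \eqref{Eqn3-Cmonotone} is only a cosmetic repackaging of the same argument.
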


Note that one can also give an explicit formula for $\mathscr{H}$ even if we will not need it in our case. 

\begin{proof}
It is clear that $(c_0,h_0) \in E_2$ as it satisfies \eqref{Eqn1-Cmonotone}. Let now $f \in F=L^\infty_\diese(\R_z)$ and consider the problem of finding $(\mathscr{C},\mathscr{H}) \in E_2=\R_c \times E$ such that 
$$
\dif_{c,h} \phi \left(c_0,(\lambda_0,h_0) \right) \cdot (\mathscr{C},\mathscr{H})=f.
$$ 

Assume first that such a pair exists and let us show that it is entirely determined by some explicit formulas. Recall that $\mathscr{H}$ will belong to $E$ (see \eqref{correction-def-E}) so that $\mathscr{H} \in W^{1,\infty}_\diese(\R_z)$ and
\begin{equation}\label{cond-mean}
\int_0^1 \mathscr{H}=0.
\end{equation}
By Lemma \ref{Lemma2-C-monotone} we also have that
\begin{equation*}
-\mathscr{C}+\widetilde{\mathscr{R}}_0 \, \widetilde{\mathscr{H}}-\lambda_0 \, \widetilde{\mathscr{H}}_{z}=f \quad \text{almost everywhere in} \quad \R_z,
\end{equation*}
where $\widetilde{\mathscr{R}}_0=\mathscr{R} \, h_0 \sqrt{1+h_0^{2}}$ and $\widetilde{\mathscr{H}}=\frac{\mathscr{H}}{1+h_0^2}$. Note that $\widetilde{\mathscr{H}}$ is Lipschitz because so is $\mathscr{H}$. By the variation of the constant method, $\widetilde{\mathscr{H}}$ is necessarily of the form:   
\begin{equation} \label{Eqn3-Cmonotone}
\widetilde{\mathscr{H}}(z)=C \exp \left(\frac{1}{\lambda_0 }\int_{0}^{z}\widetilde{\mathscr{R}}_0
(t) \, \dif t\right)-\frac{1}{\lambda_0 }\int_{0}^{z} \left(f(t)+\mathscr{C}\right) \exp \left(\frac{1}{\lambda_0}\int_{t}^{z}\widetilde{\mathscr{R}}_0(s) \, \dif s\right) \dif t,
\end{equation}
for some constant $C$. Since $\mathscr{H}$ is $1$-periodic, so is $\widetilde{\mathscr{H}}$ and $\widetilde{\mathscr{H}}(0)=\widetilde{\mathscr{H}}(1)$. This leads to   
\begin{equation}\label{last-eq}
C=C \exp \left(\frac{1}{\lambda_0} \int_{0}^{1} \widetilde{\mathscr{R}}_0(t) \, \dif t\right)-\frac{1}{\lambda_0} \int_{0}^{1} \left(f(t)+\mathscr{C}\right) \exp \left(
\frac{1}{\lambda_0}\int_{t}^{1}\widetilde{\mathscr{R}}_0(s) \, \dif s \right) \dif t.  
\end{equation}
To continue we claim that:
\begin{lemma}\label{lemme-tech-monotonie}
We have
$
\int_0^1 \widetilde{\mathscr{R}}_0 =0.
$ 
\end{lemma}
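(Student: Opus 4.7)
The plan is to extract the identity $\int_0^1 \widetilde{\mathscr{R}}_0 = 0$ directly from the equation satisfied by $(c_0,h_0)$, by multiplying it against a well-chosen test function so that the curvature term integrates to zero over one period. The natural choice is the test function $h_0$ itself, because the product $\frac{h_0 \, (h_0)_z}{1+h_0^2}$ is an exact derivative.

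More precisely, recall from \eqref{Eqn1-Cmonotone} that
\begin{equation*}
-c_0+\mathscr{R}(z)\sqrt{1+h_0^2}=\lambda_0\,\frac{(h_0)_z}{1+h_0^2}\quad\mbox{almost everywhere on $\R_z$.}
\end{equation*}
I would multiply this identity by $h_0(z)$ and integrate over one period. The left-hand side becomes
\begin{equation*}
-c_0\int_0^1 h_0(z)\,\dif z+\int_0^1 \mathscr{R}(z)\,h_0(z)\sqrt{1+h_0^2(z)}\,\dif z,
\end{equation*}
and the first integral vanishes because $h_0\in E$ has zero mean by \eqref{correction-def-E}. The right-hand side becomes
\begin{equation*}
\lambda_0\int_0^1\frac{h_0\,(h_0)_z}{1+h_0^2}\,\dif z=\frac{\lambda_0}{2}\int_0^1\frac{\dif}{\dif z}\ln(1+h_0^2)\,\dif z=0
\end{equation*}
by $1$-periodicity of $h_0$. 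Recognizing $\mathscr{R}\,h_0\sqrt{1+h_0^2}=\widetilde{\mathscr{R}}_0$ then yields the claim.

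There is no real obstacle here: the only care needed is to check that the computation makes sense, which is immediate since $h_0\in W^{1,\infty}_\diese(\R_z)$ and hence both $h_0(h_0)_z/(1+h_0^2)$ and $\ln(1+h_0^2)$ are Lipschitz and $1$-periodic, so the fundamental theorem of calculus applies on $[0,1]$ and produces a vanishing boundary term.
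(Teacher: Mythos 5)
Your proof is correct and follows essentially the same route as the paper: multiplying the ODE for $(c_0,h_0)$ by $h_0$, using the zero-mean condition to kill the $c_0$-term, and recognizing $\frac{h_0\,(h_0)_z}{1+h_0^2}$ as $\frac12\{\ln(1+h_0^2)\}_z$, which integrates to zero by $1$-periodicity. No issues.
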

\noindent Indeed $\widetilde{\mathscr{R}}_0=\mathscr{R} \, h_0 \sqrt{1+h_0^{2}}=c_0 \, h_0+\lambda_0 \frac{h_0}{1+h_0^{2}} \, (h_0)_{z}$
by the ODE in \eqref{Eqn1-Cmonotone}, so that 
\begin{equation*}
\int_{0}^{1} \widetilde{\mathscr{R}}_0=c_0 \int_{0}^{1}  h_0+\frac{\lambda_0}{2} \int_{0}^{1}\left\{\ln (1+h_0^{2}) \right\}_z=0
\end{equation*}
(due to the two last conditions in \eqref{Eqn1-Cmonotone}). This completes the proof of the intermediate lemma. Injecting it into \eqref{last-eq} therefore leads to 
\begin{equation*}
\mathscr{C}\int_{0}^{1} \exp \left(
\frac{1}{\lambda_0}\int_{t}^{1}\widetilde{\mathscr{R}}_0(s) \, \dif s \right) \dif t=-\int_{0}^{1} f(t) \exp \left(
\frac{1}{\lambda_0}\int_{t}^{1}\widetilde{\mathscr{R}}_0(s) \, \dif s \right) \dif t.
\end{equation*}
This implies that $\mathscr{C}$ is uniquely determined by the desired formula \eqref{EqndeC-Cmonotone}. To get the uniqueness of $\mathscr{H}$, we rewrite \eqref{cond-mean} as $\int_0^1 \widetilde{\mathscr{H}} \, (1+h_0^2)=0$ and inject this into \eqref{Eqn3-Cmonotone}. We find that $C$ is uniquely determined and so are therefore $\widetilde{\mathscr{H}}$ and $\mathscr{H}=\widetilde{\mathscr{H}} \, (1+h_0^2)$. This completes the proof of the uniqueness of $(\mathscr{C},\mathscr{H})$. 

Conversely, if we take $(\mathscr{C},\mathscr{H})$ defined by the preceding formulas, the same arguments allow to show that $(\mathscr{C},\mathscr{H}) \in E_2$ and
$\dif_{c,h} \phi \left(c_0,(\lambda_0,h_0) \right) \cdot (\mathscr{C},\mathscr{H})=f.
$ This proves the existence of the pair $(\mathscr{C},\mathscr{H})$ and completes the proof of the lemma.
\end{proof}   

We can now apply the implicit function theorem to get the result below.

\begin{lemma}
Assume \eqref{hyp-geometric}. Then the map $\lambda \mapsto c(\lambda)$ is $C^\infty$ in $(0,+\infty)$. Moreover for all $\lambda _{0}>0,$%
\begin{equation}
c'(\lambda _{0})=-\frac{\int_{0}^{1}\frac{(h_{0})_{z}}{1+h_{0}^{2}
}(z) \exp \left(\frac{1}{\lambda_0}\int_{z}^{1} \widetilde{\mathscr{R}}_{0}
(t) \, \dif t \right) \dif z}{\int_{0}^{1} \exp \left(\frac{1}{\lambda_0} \int_{z}^{1} \widetilde{\mathscr{R}}_{0}(t) \, \dif t\right) \dif z}  \label{Eqn-Cprime-Cmonotone}
\end{equation}
where $h_{0}(z)=h(z,\lambda _{0})$ and $\widetilde{\mathscr{R}}_{0}=\mathscr{R} \, h_{0} \sqrt{1+h_{0}^{2}}.$
\end{lemma}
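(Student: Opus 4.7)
The plan is to apply Theorem \ref{thm-implicit} to the function $\phi$ defined in \eqref{def-phi}, viewed near any point of the form $(\lambda_0,(c_0,h_0))$ with $\lambda_0>0$, $c_0=c(\lambda_0)$ and $h_0(\cdot)=h(\cdot,\lambda_0)$. First I would observe that, by the very definition of $(c(\lambda),h(\cdot,\lambda))$ as the unique solution to \eqref{Eqn1-Cmonotone} in $\R \times E$ (Theorem \ref{front-well-posed} combined with the zero-mean normalization built into $E$), one has
$$
\phi(\lambda,(c(\lambda),h(\cdot,\lambda)))=0 \quad \forall \lambda>0,
$$
and $(c(\lambda),h(\cdot,\lambda))$ is the \emph{unique} zero of $\phi(\lambda,\cdot)$ in $E_2$.

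Next I would verify the hypotheses of Theorem \ref{thm-implicit} at $(\lambda_0,(c_0,h_0))$. The smoothness $\phi\in C^\infty$ is Lemma \ref{Lemma2-C-monotone}. The isomorphism condition \eqref{cond-isom}, i.e.\ $\dif_{c,h}\phi(\lambda_0,(c_0,h_0))\in\mathrm{Isom}(E_2,F)$, is bijectivity plus continuity: bijectivity is exactly the content of Lemma \ref{Lemma3-C-monotone}, and continuity follows from $\phi\in C^\infty$; invertibility of the inverse map is then automatic by the open mapping theorem. Theorem \ref{thm-implicit}\ref{formula-implicit} then yields a local $C^1$ implicit function $\lambda\mapsto \varphi(\lambda)=(\tilde c(\lambda),\tilde h(\cdot,\lambda))$ on a neighborhood $U$ of $\lambda_0$ with values in a neighborhood $V$ of $(c_0,h_0)$; by the uniqueness just recalled, this local function coincides with $\lambda\mapsto (c(\lambda),h(\cdot,\lambda))$ on $U$. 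Hence the latter is $C^1$ on $(0,+\infty)$. Bootstrapping (since $\phi$ is $C^\infty$ and the derivative formula of Theorem \ref{thm-implicit}\ref{formula-byproduct} preserves regularity) gives $C^\infty$.

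Finally I would use Theorem \ref{thm-implicit}\ref{formula-byproduct} to compute $c'(\lambda_0)$. Directly from \eqref{def-phi}, the partial differential with respect to $\lambda$ is
$$
\dif_\lambda \phi(\lambda_0,(c_0,h_0))\cdot 1 = -\frac{(h_0)_z}{1+h_0^2}.
$$
According to Theorem \ref{thm-implicit}\ref{formula-byproduct}, the pair $(c'(\lambda_0),h_\lambda(\cdot,\lambda_0))\in E_2$ is the unique solution of
$$
\dif_{c,h}\phi(\lambda_0,(c_0,h_0))\cdot (c'(\lambda_0),h_\lambda(\cdot,\lambda_0)) = -\dif_\lambda \phi(\lambda_0,(c_0,h_0))\cdot 1 = \frac{(h_0)_z}{1+h_0^2}.
$$
Applying Lemma \ref{Lemma3-C-monotone} with right-hand side $f:=\frac{(h_0)_z}{1+h_0^2}\in F$, the formula \eqref{EqndeC-Cmonotone} for the $c$-component gives exactly \eqref{Eqn-Cprime-Cmonotone}.

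The only step that requires any care is verifying the isomorphism property: Lemma \ref{Lemma3-C-monotone} establishes bijectivity but one must observe that the inverse is automatically bounded (open mapping), and that $\phi$ belongs to $C^\infty$ (not merely $C^1$) so that the classical $C^\infty$-version of the implicit function theorem applies; all other steps are just identification of the right-hand side $f$ and bookkeeping.
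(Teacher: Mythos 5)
Your proposal is correct and follows essentially the same route as the paper: apply the implicit function theorem (Theorem \ref{thm-implicit}) to $\phi$ at $(\lambda_0,(c_0,h_0))$, using Lemma \ref{Lemma2-C-monotone} for smoothness and Lemma \ref{Lemma3-C-monotone} for the isomorphism condition, identify the implicit function with $\lambda\mapsto(c(\lambda),h(\cdot,\lambda))$ by uniqueness of solutions to \eqref{Eqn1-Cmonotone}, and then read off $c'(\lambda_0)$ from \eqref{EqndeC-Cmonotone} with $\dif_\lambda\phi\cdot 1=-\frac{(h_0)_z}{1+h_0^2}$. The only differences are cosmetic (the sign absorbed into the right-hand side $f$, and the explicit appeal to the open mapping theorem), and your sign bookkeeping does reproduce \eqref{Eqn-Cprime-Cmonotone} exactly.
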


\begin{proof}
Let us consider the open set $\mathcal{O}:=\mathbb{R}_{\lambda }^{+} \times E_2 \subset E_{1}\times E_{2}$, $c_0=c(\lambda_0)$, and let us apply Theorem \ref{thm-implicit} to $\phi:\mathcal{O} \subset E_1 \times E_2 \to F$ at the point $\left(\lambda_0,(c_0,h_0)\right) \in \mathcal{O}$. Let us recall that $(c_0,h_0)$ satisfies \eqref{Eqn1-Cmonotone}. Hence $\phi \left(\lambda_0,(c_0,h_0) \right)=0$ in $F=L^\infty_\diese(\R_z)$ since by the choice of $\phi$ in \eqref{def-phi}, this equality is equivalent to the ODE in \eqref{Eqn1-Cmonotone}. We also know that $\dif_{c,h} \phi \left(\lambda_0,(c_0,h_0) \right) \in \mbox{Isom} (E_2,F)$ by Lemma \ref{Lemma3-C-monotone}. Theorem \ref{thm-implicit} then gives us the existence of the implicit function $\varphi:U \to V$, with $U$ an open set containing $\lambda_0$. In our case, $\varphi$ is $C^\infty$ because so is $\phi$, see for instance \cite{Car67}.

To continue, we claim that for all $\lambda \in U$, $\varphi(\lambda)=\left(c(\lambda),h(\cdot,\lambda)\right)$. This is a consequence of Theorem \ref{thm-implicit}\ref{formula-implicit}. Indeed, the pair $(c,h):=\varphi(\lambda) \in V \subset E_1 \times E_2$ satisfies $\phi \left(\lambda,(c,h)\right)=0$ in $F$. This means that $h$ solves the ODE in \eqref{Eqn1-Cmonotone}. Moreover, since $h$ belongs to $E$ ($E$ defined by \eqref{correction-def-E}) we have that: $h \in W^{1,\infty}(\R_z)$, $h$ is $1$-periodic and $\int_0^1 h=0$. Hence $(c,h)$ satisfies all the conditions in \eqref{Eqn1-Cmonotone}, which completes the proof of the claim by the uniqueness of such a pair.

Having verified the preceding claim, we can define the speed $c(\lambda)$ through the implicit function $\varphi$. To do so, we consider the projection
\begin{equation*}
\Pi _{c}:(c,h)\in E_{2}=\R_c \times E \mapsto c\in \mathbb{R},
\end{equation*}
which gives us that $c(\lambda )=\Pi _{c} \left(\varphi (\lambda)\right)$ for all $\lambda \in V$. This projection is $C^\infty$ as a bounded and linear map. Thus the composition $c(\cdot)=\left(\Pi_c \circ \varphi\right)(\cdot)$ is $C^{\infty}$ in $U$. We have shown that $\lambda \mapsto c(\lambda)$ is $C^\infty$ in some neighborhood $U$ of $\lambda_0$. The regularity then holds on all $\R^+$, because $\lambda _{0}>0$ is arbitrarily taken. 

It remains to show \eqref{Eqn-Cprime-Cmonotone}. By the chain rule for differentials, see \cite{Car67}, we have that $\dif \left(\Pi_c \circ \varphi\right)(\lambda_0)=\dif \Pi_c \left(\varphi(\lambda_0)\right) \circ \dif \varphi(\lambda_0) \in \mathcal{L}(\R_c,\R_\lambda)$. Since the variable $\lambda$ is real, $\dif \left(\Pi_c \circ \varphi\right)(\lambda_0) \cdot 1=\left(\Pi_c \circ \varphi\right)'(\lambda_0)$ and we get
\begin{equation*}
c'(\lambda _{0})=\left(\Pi_c \circ \varphi\right)'(\lambda_0)=\dif \Pi _{c} \left(\varphi (\lambda
_{0})\right) \cdot \left(d\varphi (\lambda _{0}) \cdot 1\right).
\end{equation*}
As $\Pi _{c}$ is linear continuous, $\dif \Pi_c=\Pi_c$ everywhere in $E_2$. Hence
\begin{equation*}
c'(\lambda _{0})=\Pi _{c} \left(\dif \varphi (\lambda _{0}) \cdot 1\right)
\end{equation*}
which, thanks toTheorem \ref{thm-implicit}\ref{formula-byproduct}, gives 
\begin{equation*}
c'(\lambda _{0})=-\Pi _{c}\left\{\left[ \dif_{c,h}\phi \left(\lambda
_{0},(c_{0},h_{0})\right)\right]^{-1}\cdot \left(\dif_{\lambda } \phi \left(\lambda
_{0},(c_{0},h_{0}) \right) \cdot 1\right) \right\}.
\end{equation*}
If we denote by $f$ the function $\dif_{\lambda } \phi \left(\lambda
_{0},(c_{0},h_{0}) \right) \cdot 1 \in F$, the above formula means that $c'(\lambda_0)=-\mathscr{C}$ where $\mathscr{C}$ is the speed of the unique pair $(\mathscr{C},\mathscr{H}) \in E_2=\R_c \times E$ solution of
$$
\dif_{c,h}\phi \left(\lambda
_{0},(c_{0},h_{0})\right) \cdot (\mathscr{C},\mathscr{H})=f.
$$ 
By \eqref{EqndeC-Cmonotone}, we then deduce that
$$
c'(\lambda_0)=\frac{\int_{0}^{1} f(z ) \exp \left(\frac{1}{\lambda }\int_{z}^{1}
\widetilde{\mathscr{R}}_0(t) \, \dif t\right) \dif z}{
\int_{0}^{1}\exp \left(\frac{1}{\lambda }\int_{z}^{1}\widetilde{\mathscr{R}}_0
(t) \, \dif t \right) \dif z}.
$$
It only remains to compute $f=\dif_{\lambda } \phi \left(\lambda
_{0},(c_{0},h_{0}) \right) \cdot 1$. Here again the variable $\lambda$ is real and $f$ is the usual partial derivative: 
$$
f=\phi_\lambda' \left(\lambda
_{0},(c_{0},h_{0}) \right)=\lim_{\lambda \to \lambda_0} \frac{\phi \left(\lambda,(c_{0},h_{0}) \right)-\phi \left(\lambda
_{0},(c_{0},h_{0}) \right)}{\lambda-\lambda_0}
$$
(the limit being in $F=L_\diese^\infty(\R_z)$). Recalling the definition of $\phi$ given by \eqref{def-phi}, we end up with
$
f=-\frac{(h_{0})_{z}}{1+h_{0}^{2}}
$
%
%
%
%
%
and the proof is complete.
\end{proof}

\begin{remark}\label{rem-continuum-corrector}
We claim that the mapping
$$
\lambda \in \R^+ \mapsto h(\cdot,\lambda) \in W^{1,\infty}_\diese(\R_z)
$$
is also $C^\infty$. Indeed we have seen above that $\varphi(\lambda)=\left(c(\lambda),h(\cdot,\lambda)\right)$, so that we can copy the arguments used to get the regularity of the speed by considering this time the projection $
\Pi_h:(c,h) \mapsto h$.
\end{remark}

We are now ready to prove Theorem \ref{Theorem-C-monotone}.

\begin{proof}[Proof of Theorem \ref{Theorem-C-monotone}]
It remains to prove that $c'(\lambda _{0})$ is nonpositive, for any $\lambda_0>0$, and that $\lim_{\lambda \to +\infty} c(\lambda)=\int_0^1 \mathscr{R} (z) \, \dif z$. 

Let us start with the first claim. By \eqref{Eqn-Cprime-Cmonotone}, $c'(\lambda_0)$ has the same sign as the numerator term
\begin{equation*}
J:=-\int_{0}^{1}\frac{(h_{0})_{z}}{1+h_{0}^{2}
}(z) \exp \left(\frac{1}{\lambda_0}\int_{z}^{1} \widetilde{\mathscr{R}_{0}}
(t) \, \dif t \right) \dif z.
\end{equation*}
An integration by parts gives
\begin{equation*}
\begin{split}
J & = \frac{1}{\lambda _{0}}\int_{0}^{1}\arctan (h_{0}) \, \widetilde{\mathscr{R}_{0}}(z) \exp \left(\frac{1}{\lambda _{0}}\int_{z}^{1}\widetilde{\mathscr{R}_{0}}(t) \, \dif t \right) \dif z  \\
& \quad +\left[ \arctan(h_{0}) \exp \left(\frac{1}{\lambda _{0}}\int_{z}^{1}\widetilde{\mathscr{R}_{0}}(t) \, \dif t \right)\right] _{z
=0}^{z=1}.
\end{split}
\end{equation*}
The second term vanishes since $h_0$ is $1$-periodic and
$\int_{0}^{1}\widetilde{\mathscr{R}_{0}} =0$ by Lemma \ref{lemme-tech-monotonie}. Recalling that $\widetilde{\mathscr{R}_{0}}=\mathscr{R} \, h_0 \sqrt{1+h_0^2}$, we obtain
\begin{equation*}
J=\frac{1}{\lambda _{0}} \int_{0}^{1} \left(\mathscr{R} \, h_{0} \arctan (h_{0})\sqrt{1+h_{0}^{2}}\right)(z)\exp \left(\frac{1}{\lambda_0}\int_{z}^{1}
\widetilde{\mathcal{R}}_{0}(t) \, \dif t \right) \dif z.
\end{equation*}%
Now since $\arctan (h_{0})$ has the same sign as $h_{0}$ and $\mathscr{R} \geq 0$ by \eqref{hyp-geometric}, we have $J\geq 0.$ This proves that $\lambda \mapsto c(\lambda)$ is nonincreasing. 

Let us immediately show that this map is decreasing if $\mathscr{R}$ is not constant. In that case, the ODE in \eqref{Eqn1-Cmonotone} implies that $h_0$ is not identically equal to zero. Hence $J>0$---since $\mathscr{R}>0$ by \eqref{hyp-geometric}---and this completes the proof that $c'(\lambda_0)<0$ if $\mathscr{R}$ is not trivial. 

Let us end with the limit of $c(\lambda)$ as $\lambda \to +\infty$. Recall that 
\begin{equation}\label{formula-speed-h}
c(\lambda)=\int_0^1 \mathscr{R}(z) \sqrt{1+h^2(z,\lambda)} \, \dif z,
\end{equation}
after integrating the equation in \eqref{Eqn1-Cmonotone}. Recall now that by  \eqref{hyp-geometric}, we have $\mathscr{R}_M \geq \mathscr{R} \geq \mathscr{R}_m>0$ for some constants. Then
$
c \leq \mathscr{R}_M
$
and
$$
\max_{z \in \R} |h(z,\lambda)| \leq {\sqrt{\frac{\mathscr{R}_M^2}{\mathscr{R}_m^2}-1}}<+\infty
$$
by Theorem \ref{front-well-posed}. Using again the ODE in \eqref{Eqn1-Cmonotone},
$$
\max_{z \in \R} |h_z(z,\lambda)| \leq \frac{C}{\lambda}
$$
for some $C$ not depending on $\lambda$. Since $h(\cdot,\lambda)$ is $1$-periodic with $\int_0^1 h(z,\lambda) \, \dif z=0$, an integration gives 
$$
\max_{z \in \R} |h(z,\lambda)| \leq \frac{C}{\lambda}.
$$
This proves that $h(\cdot,\lambda) $ uniformly converges towars zero on $\R$, as $\lambda \to +\infty$. Then the fact that $\lim_{\lambda \to +\infty} c(\lambda)=\int_0^1 \mathscr{R}(z) \, \dif z$ is obvious from \eqref{formula-speed-h}. The proof of the theorem is now complete.
\end{proof}

\begin{remark}\label{continuum-corrector-limit}
We have also proved that $h(\cdot,\lambda) \to 0$ in $W^{1,\infty}(\R)$ as $\lambda \to +\infty$.
\end{remark}

\section{Asymptotic expansion of the front's profile}\label{sec-expansion} 
  
From Section \ref{sec-homogenization}, we know that at a macroscopic level, the profile of the front,~$v^\varepsilon$, behaves like $v^0$ which is a constant (normalized to zero). In this section, we propose to analyze its microscopic oscillations by looking at the corrector $w=w(z)$ given by Theorem \ref{Theorem-homogenization-lambda}\ref{corrector}. So let us consider for any $\lambda \in (0,+\infty)$, the unique 
$w \in W^{2,\infty}(\R)$ satisfying
\begin{equation}\label{corrector-fixed}
\begin{cases}
-c^0+R \left(z,\frac{\overline{g}}{\overline{b}}\right) \sqrt{1+w_z^{2}}=\lambda \, \frac{w_{zz}}{1+w_{z}^{2}},\\[1ex]
w(z)=w(z+1),\\[1ex] 
\int_{0}^{1} w(t) \, \dif t=0,
\end{cases}
\end{equation}
for almost every $z \in \R$.
%

\begin{theorem}\label{thm-expansion}
Let the assumptions of Theorem \ref{Theorem-homogenization-lambda} hold. Then for all $y \in \R$ and $\varepsilon >0$, 
\begin{equation*}
v^\varepsilon(y)=
\begin{cases}
\co(\varepsilon) & \quad \mbox{if} \quad \lambda=+\infty,\\
\varepsilon \, w\left(\frac{y}{\varepsilon}\right)+\co (\varepsilon) & \quad \mbox{if} \quad \lambda \in (0,+\infty),
\end{cases}
\end{equation*}
where $\frac{\co(\varepsilon)}{\varepsilon} \to 0$ in $L^{\infty}(\R)$ as $\varepsilon \downarrow 0$ and with $w$ given by \eqref{corrector-fixed}.
\end{theorem}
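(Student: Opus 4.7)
The plan is to reformulate the desired expansion in terms of the rescaled profile
$w^\varepsilon(z) := v^\varepsilon(\varepsilon\,z)/\varepsilon$ introduced in \eqref{ansatz}, and to show that $w^\varepsilon$ converges uniformly on $\R$ towards the expected limit. Indeed, for any $y\in\R$ and $\varepsilon>0$,
\begin{equation*}
\sup_{y\in\R} \left| v^\varepsilon(y)-\varepsilon\,w\left(\tfrac{y}{\varepsilon}\right)\right| = \varepsilon\,\|w^\varepsilon-w\|_{L^\infty(\R)},
\end{equation*}
so the statement is equivalent to $\|w^\varepsilon-w\|_{L^\infty(\R)}\to 0$ as $\varepsilon\downarrow 0$ (with $w\equiv 0$ in the regime $\lambda=+\infty$).

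For the case $\lambda\in(0,+\infty)$, I would rely on the uniform estimates already obtained. By Lemma~\ref{lem-corrector}\ref{bound-lambda-finite}, the family $\{w^\varepsilon\}$ is bounded in $W^{2,\infty}(\R)$, so Ascoli-Arz\'ela yields convergence of any subsequence (up to a subsubsequence) to some $w^\star\in W^{2,\infty}(\R)$ in $W^{1,\infty}(\R)$. The limit $w^\star$ is $1$-periodic and inherits the mean-zero property from $w^\varepsilon$ (which satisfies $\int_0^1 w^\varepsilon=0$ thanks to $\overline{v}^\varepsilon=0$ and \eqref{ansatz}). Moreover, repeating verbatim the passage to the limit performed in the proof of Theorem~\ref{Theorem-homogenization-lambda}, case $\lambda\in(0,+\infty)$, shows that $w^\star$ satisfies the ODE of \eqref{corrector-fixed} almost everywhere with $c^0=c^0(\lambda)$. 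By the uniqueness statement in Theorem~\ref{front-well-posed} together with the mean-zero normalization, $w^\star=w$. Since every subsequence admits a further subsubsequence converging to the same limit $w$, the whole family $w^\varepsilon$ converges uniformly (in fact in $W^{1,\infty}(\R)$) to $w$, which gives the announced $o(\varepsilon)$ expansion.

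For the case $\lambda=+\infty$, there is no need to invoke a corrector equation: Lemma~\ref{bound-front} applied to $(c^\varepsilon,v^\varepsilon)$ yields
\begin{equation*}
\|v^\varepsilon_y\|_{\infty}\le \tan\!\left(\tfrac{2\,c^\varepsilon\,\varepsilon}{\mu(\varepsilon)}\right),
\end{equation*}
and the boundedness of $c^\varepsilon$ (Lemma~\ref{homogenization-compacity}) together with $\varepsilon/\mu(\varepsilon)\to 0$ gives $\|v^\varepsilon_y\|_{\infty}\to 0$. Since $v^\varepsilon$ is $\varepsilon$-periodic with zero mean, it vanishes somewhere on each period, hence $\|v^\varepsilon\|_\infty\le\varepsilon\,\|v^\varepsilon_y\|_\infty=o(\varepsilon)$ uniformly on $\R$.

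The only real subtlety is checking that the passage to the limit in the corrector ODE indeed runs as in Theorem~\ref{Theorem-homogenization-lambda}; but since that proof already identifies the limit of $w^\varepsilon_{zz}$ in $L^1(0,1)$ via Lemma~\ref{lem-corrector}\ref{conv-combustion-corrector}, no new argument is needed. The uniqueness of the $1$-periodic, mean-zero solution to \eqref{corrector-fixed} is the clinching ingredient that allows us to upgrade subsequence convergence to full convergence.
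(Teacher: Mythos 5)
Your proposal is correct and follows essentially the same route as the paper: the $\lambda=+\infty$ case via the $\arctan$ bound of Lemma~\ref{bound-front} plus $\varepsilon$-periodicity and zero mean, and the $\lambda\in(0,+\infty)$ case via the $W^{2,\infty}$ bound on $w^\varepsilon$, Ascoli--Arz\'ela, re-running the passage to the limit of Theorem~\ref{Theorem-homogenization-lambda} to identify the limiting ODE, and the uniqueness of the mean-zero periodic solution (Theorem~\ref{front-well-posed}) to upgrade subsequential to full-family convergence. The only cosmetic difference is that you conclude directly from $\|w^\varepsilon-w\|_{L^\infty}\to 0$ via the exact identity $\sup_y|v^\varepsilon(y)-\varepsilon\,w(y/\varepsilon)|=\varepsilon\,\|w^\varepsilon-w\|_{L^\infty}$, whereas the paper passes through the convergence of the derivatives and a Poincar\'e-type bound; both are equivalent given the $W^{1,\infty}$ convergence.
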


\begin{remark}
If $\lambda<+\infty$ and if $R \left(\cdot,\frac{\overline{g}}{\overline{b}} \right)$ is not constant, then $w$ is not trivial. Note also that the more difficult case $\lambda=0$ will be discussed in Section \ref{conclusion}. 
\end{remark}

\begin{proof}
Let us begin with the proof for $\lambda=+\infty$. By \eqref{error-estimate}, 
$$
\|v^\varepsilon_y\|_\infty=\cO \left(\frac{\varepsilon}{\mu(\varepsilon)}\right)={\co}_\varepsilon(1)
$$
(since $\frac{\mu(\varepsilon)}{\varepsilon} \to \lambda=+\infty$). Since $v^\varepsilon$ is $\varepsilon$-periodic with a zero mean value, 
$$
\|v^\varepsilon\|_\infty \leq \varepsilon \, \|v^\varepsilon_y\|_\infty=\co (\varepsilon). 
$$
This completes the proof in that case.

Let us now consider the case $\lambda \in (0,+\infty)$ and set  $w^\varepsilon(z)=\frac{v^\varepsilon(\varepsilon \, z)}{\varepsilon}$ (as in \eqref{ansatz}). We claim that $w^\varepsilon$ converges to $w$ as $\varepsilon \downarrow 0$ stronlgly in $W^{1,\infty}(\R)$. We have in fact established this claim during the proof of Theorem \ref{Theorem-homogenization-lambda} but only along some sequence $\varepsilon_n \to 0$. But, applying this reasoning to any such sequence, as before, we get again the convergence for the whole family, because here also the limit is always the same, that is $w$ defined by \eqref{corrector-fixed}. We thus have:
$$
\max_{y} \Big| \frac{\dif}{\dif y} \left\{v^{\varepsilon }(y)- \varepsilon \, w \left(\frac{y}{
\varepsilon}\right) \right\}\Big|=\max_{y} \Big|v_y^{\varepsilon }(y)- w_z \left(\frac{y}{
\varepsilon}\right)\Big| ={\co}_\varepsilon(1).
$$
Using as above the periodicity of $v^\varepsilon$ and $w$ and the fact that they have zero mean values, 
$$
\max_{y} \Big| v^{\varepsilon }(y)- \varepsilon \, w \left(\frac{y}{
\varepsilon }\right) \Big| \leq \varepsilon \, \max_{y} \Big|  \frac{\dif}{\dif y} \left\{v^{\varepsilon }(y)- \varepsilon \, w \left(\frac{y}{
\varepsilon}\right) \right\} \Big|=\co (\varepsilon)
$$
and the proof is complete.
\end{proof}

Just as for the speeds, here too we can see that there is a smooth one-to-one correspondence between the correctors $w$ and the curvature regimes $\lambda$. More precisely, let us consider the mapping 
$$
\lambda \in \R^+ \mapsto w(\cdot)=w(\cdot,\lambda) \in W^{2,\infty}_\diese(\R_z),
$$ 
defined by \eqref{corrector-fixed}. Then:

\begin{theorem}\label{continuum-corrector}
Under the assumptions of Theorem \ref{Theorem-homogenization-lambda}, $\lambda \mapsto w(\cdot,\lambda)$ is $C^\infty$, injective, and satisfies: 
$$
\lim_{\lambda \to +\infty} w(\cdot,\lambda)=0 \quad \mbox{in} \quad W^{2,\infty}(\R).
$$
\end{theorem}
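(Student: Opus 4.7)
The plan has three parts, mirroring the three assertions: smoothness, injectivity, and the limit as $\lambda \to +\infty$. I would organise the proof by reducing everything to the analogous statements already established for the derivative $h(\cdot,\lambda) = w_z(\cdot,\lambda)$ in Remarks \ref{rem-continuum-corrector} and \ref{continuum-corrector-limit}, via the ``antiderivative + mean-zero normalisation'' map.

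\textbf{Smoothness.} I would introduce the linear operator
\begin{equation*}
A:\Big\{h \in W^{1,\infty}_\diese(\R_z):\int_0^1 h =0 \Big\} \to W^{2,\infty}_\diese(\R_z),\quad A(h)(z):=\int_0^z h(t)\,\dif t - \int_0^1\!\!\int_0^s h(t)\,\dif t\,\dif s,
\end{equation*}
which is bounded, hence $C^\infty$. Because $w(\cdot,\lambda)$ is $1$-periodic with zero mean, $h(\cdot,\lambda) = w_z(\cdot,\lambda)$ lies in the domain of $A$ and $A(h(\cdot,\lambda)) = w(\cdot,\lambda)$. Composition with the $C^\infty$ map $\lambda \mapsto h(\cdot,\lambda)$ from Remark \ref{rem-continuum-corrector} gives the desired smoothness.

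\textbf{Limit at infinity.} By Remark \ref{continuum-corrector-limit}, $h(\cdot,\lambda) \to 0$ in $W^{1,\infty}(\R)$, i.e.\ $w_z \to 0$ and $w_{zz} \to 0$ uniformly. Since $w$ is $1$-periodic with $\int_0^1 w = 0$, a standard Poincar\'e-type bound $\|w\|_\infty \leq \|w_z\|_\infty$ (obtained from the existence of a zero of $w$ on $[0,1]$) upgrades this to $w \to 0$ uniformly, so $w(\cdot,\lambda) \to 0$ in $W^{2,\infty}(\R)$.

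\textbf{Injectivity.} This is the step that requires a direct argument on \eqref{corrector-fixed}. Assume $\lambda_1 \neq \lambda_2$ and $w(\cdot,\lambda_1) = w(\cdot,\lambda_2) =: w$. Subtracting the two instances of the ODE in \eqref{corrector-fixed} gives
\begin{equation*}
c(\lambda_1) - c(\lambda_2) = (\lambda_2-\lambda_1)\,\frac{w_{zz}}{1+w_z^2},
\end{equation*}
so $\frac{w_{zz}}{1+w_z^2} \equiv K$ for some constant $K$. Integrating $w_{zz} = K(1+w_z^2)$ over one period and using periodicity of $w_z$ forces $K(1+\int_0^1 w_z^2) = 0$, hence $K=0$. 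Then $w_z$ is constant, periodicity gives $w_z \equiv 0$, and the zero-mean condition gives $w \equiv 0$. Substituting back into \eqref{corrector-fixed} yields $R(z,\overline{g}/\overline{b}) = c(\lambda_1) = c(\lambda_2)$ almost everywhere, which contradicts the strict monotonicity of $c(\cdot)$ established in Theorem \ref{Theorem-C-monotone} whenever $R(\cdot,\overline{g}/\overline{b})$ is not constant (the degenerate constant case being the only one where injectivity trivially fails, as then $w(\cdot,\lambda)\equiv 0$ for every $\lambda$). The main subtle point is therefore to exploit the already-proved strict monotonicity of $\lambda \mapsto c(\lambda)$ to close this contradiction; all other ingredients are elementary manipulations of the corrector equation.
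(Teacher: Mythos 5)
Your proof is correct. The smoothness and $\lambda\to+\infty$ parts coincide with the paper's (one-line) proof: the paper likewise reads off $C^\infty$ regularity from Remark \ref{rem-continuum-corrector} via $w_z(\cdot,\lambda)=h(\cdot,\lambda)$ and the limit from Remark \ref{continuum-corrector-limit}; you merely make explicit the bounded linear antiderivative map and the mean-zero Poincar\'e bound, which is fine. The only genuine divergence is injectivity. The paper's intended argument is shorter: if $w(\cdot,\lambda_1)=w(\cdot,\lambda_2)$ then integrating \eqref{corrector-fixed} over a period (cf. \eqref{formula-speed-h}) gives $c(\lambda_1)=\int_0^1 \mathscr{R}\sqrt{1+w_z^2}=c(\lambda_2)$, and one concludes by injectivity of $\lambda\mapsto c(\lambda)$ from Theorem \ref{Theorem-C-monotone}. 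Your route instead subtracts the two ODEs, forces $\frac{w_{zz}}{1+w_z^2}$ to be a constant which periodicity kills, deduces $w\equiv 0$ and hence $\mathscr{R}$ constant, and then contradicts strict monotonicity; this is equally valid (indeed, at that stage the constancy of $\mathscr{R}$ alone already gives the contradiction, so the appeal to monotonicity is slightly redundant), at the cost of a few more manipulations. A small bonus of your version is that it makes explicit the degenerate case: when $\mathscr{R}=R(\cdot,\overline{g}/\overline{b})$ is constant, $w(\cdot,\lambda)\equiv 0$ for every $\lambda$ and injectivity fails, a caveat that is only implicit in the paper (both its proof and Theorem \ref{Theorem-C-monotone} require $\mathscr{R}$ non-constant for strict monotonicity).
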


\begin{remark}
This is a version of Theorem \ref{Theorem-C-monotone} for the correctors. The limit at $\lambda=0$ will be discussed in the concluding remarks.
\end{remark}

\begin{proof}
Recall that $w_z(\cdot,\lambda)=h(\cdot,\lambda)$ in \eqref{Eqn1-Cmonotone} and use Remark \ref{rem-continuum-corrector} to get the $C^\infty$ regularity. For the injectivity, use that $\lambda \mapsto c^0=c^0(\lambda)$ is injective by Theorem \ref{Theorem-C-monotone}. For the limit at $+\infty$, use Remark \ref{continuum-corrector-limit}.
\end{proof}

Our last result considers a fixed $\mu$ as in Theorem \ref{Theorem-homogenization}.  In that case, we can identify the second order term of the expansion. To get an expansion in $L^\infty$, we will need to assume in addition that:
\begin{equation}\label{hyp-arrh-6}
\mbox{The function $T \in \R^+ \mapsto R \left(\cdot,T\right) \in L^\infty(\R)$ is continuous at $T=\frac{\overline{g}}{\overline{b}}$.}
\end{equation}

\begin{theorem}\label{thm-expansion-bis}
Let the assumptions of Theorem \ref{Theorem-homogenization} hold. Then for all $y \in \R$ and  $\varepsilon >0$, we have
\begin{equation*}
v^\varepsilon(y)=\varepsilon^2 \, Q \left(\frac{y}{\varepsilon}\right) +\co (\varepsilon^2),
\end{equation*}
where $Q \in W^{2,\infty}(\R_z)$ is defined as
$$
\begin{cases}
Q_{zz}(z)=\frac{1}{\mu} \left\{R\left(z,\frac{\overline{g}}{\overline{b}}\right)-\int_0^1 R\left(t,\frac{\overline{g}}{\overline{b}}\right) \dif t\right\},\\
Q(z+1)=Q(z),\\
\mbox{and } \int_0^1 Q=0,
\end{cases}
$$
and where $\lim_{\varepsilon \downarrow 0} \frac{\co (\varepsilon^2)}{\varepsilon^2}=0$ in $L_{\loc}^p(\R)$ for any $p \in [1,+\infty)$. If in addition \eqref{hyp-arrh-6} holds then the latter limit holds in $L^\infty(\R)$.
\end{theorem}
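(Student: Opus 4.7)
The strategy mirrors the proof of Theorem \ref{thm-expansion} in the case $\lambda\in(0,+\infty)$, but with a finer rescaling. I introduce
$$W^\varepsilon(z) := \frac{v^\varepsilon(\varepsilon z)}{\varepsilon^2},$$
which is $1$-periodic with $\int_0^1 W^\varepsilon = 0$ thanks to $\overline{v}^\varepsilon = 0$. The change of variable $y = \varepsilon z$ in \eqref{eps-front-eqn-bis} yields the rescaled ODE
$$\mu\, W^\varepsilon_{zz}(z) = \bigl(1 + \varepsilon^2 (W^\varepsilon_z)^2\bigr) \Bigl[-c^\varepsilon + R\bigl(z, T^\varepsilon(z)\bigr) \sqrt{1 + \varepsilon^2 (W^\varepsilon_z)^2}\Bigr],$$
where $T^\varepsilon(z) := u^\varepsilon(v^\varepsilon(\varepsilon z), \varepsilon z)$.

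Next, I establish that $W^\varepsilon$ is uniformly bounded in $W^{2,\infty}(\R)$. The estimate \eqref{error-estimate} applied with the present fixed $\mu$, combined with the upper bound $c^\varepsilon \leq R_M$, gives $\|v^\varepsilon_y\|_\infty = \cO(\varepsilon)$, so that $\|W^\varepsilon_z\|_\infty = \cO(1)$. Since $v^\varepsilon$ is $\varepsilon$-periodic with zero mean, $\|v^\varepsilon\|_\infty \leq \varepsilon\|v^\varepsilon_y\|_\infty = \cO(\varepsilon^2)$, whence $\|W^\varepsilon\|_\infty = \cO(1)$. The bound $\|W^\varepsilon_{zz}\|_\infty = \cO(1)$ then follows directly from the displayed ODE. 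Arzel\`a--Ascoli delivers a subsequence $W^\varepsilon \to Q$ strongly in $C^1_\loc(\R)$, with $Q \in W^{2,\infty}_\diese(\R)$ having zero mean.

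To pass to the limit in the rescaled ODE, I invoke three facts: $c^\varepsilon \to c^0 = \int_0^1 R(z, \overline{g}/\overline{b})\,\dif z$ and $T^\varepsilon(z) \to \overline{g}/\overline{b}$ uniformly in $z$ (from Theorem \ref{Theorem-homogenization}); the Carath\'eodory property of $R$ through \ref{hyp-arrh-1}--\ref{hyp-arrh-2} combined with the uniform bound $R_M$, which via dominated convergence yields $R(z, T^\varepsilon(z)) \to R(z, \overline{g}/\overline{b})$ in $L^p(0,1)$ for every $p < \infty$ (exactly as in Lemma \ref{lem-corrector}\ref{conv-combustion-corrector}); and $\varepsilon^2(W^\varepsilon_z)^2 \to 0$ uniformly. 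Passing to the limit then gives $\mu\, Q_{zz}(z) = -c^0 + R(z, \overline{g}/\overline{b})$, which is exactly the equation defining $Q$ in the theorem. Since $Q$ is uniquely characterised by this ODE together with $1$-periodicity and zero mean, the whole family $W^\varepsilon$ converges to $Q$. The identity $v^\varepsilon(y) - \varepsilon^2 Q(y/\varepsilon) = \varepsilon^2\,[W^\varepsilon - Q](y/\varepsilon)$, combined with the change of variable $y \mapsto \varepsilon z$ and the $L^p(0,1)$ convergence of $W^\varepsilon - Q$, yields the required $L^p_\loc(\R)$ bound on the error.

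The main obstacle is upgrading this to $L^\infty(\R)$ under the additional hypothesis \eqref{hyp-arrh-6}. Here the stronger continuity of $T \mapsto R(\cdot, T) \in L^\infty$ at $T = \overline{g}/\overline{b}$ directly upgrades $R(z, T^\varepsilon(z)) \to R(z, \overline{g}/\overline{b})$ to $L^\infty(0,1)$-convergence, so that $W^\varepsilon_{zz} \to Q_{zz}$ in $L^\infty$. Integrating twice and using $1$-periodicity with the zero-mean conditions on $W^\varepsilon$ and $Q$ to pin down the two constants of integration then promotes the convergence to $W^{2,\infty}(\R)$, giving $\|v^\varepsilon - \varepsilon^2 Q(\cdot/\varepsilon)\|_{L^\infty(\R)} = \co(\varepsilon^2)$ as desired.
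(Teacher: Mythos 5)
Your proof is correct, but it takes a genuinely different route from the paper. The paper works with the first-order rescaling $w^\varepsilon(z)=v^\varepsilon(\varepsilon z)/\varepsilon$, writes $\frac{\mu}{\varepsilon}w^\varepsilon_{zz}=F_\varepsilon$, proves $F_\varepsilon\to R\left(\cdot,\frac{\overline g}{\overline b}\right)-\int_0^1 R\left(t,\frac{\overline g}{\overline b}\right)\dif t$ in $L^p(0,1)$, and then transfers this to $v^\varepsilon-\varepsilon^2 Q(\cdot/\varepsilon)$ by applying the Poincar\'e--Wirtinger inequality twice on each period; for the $L^\infty$ statement it therefore needs $L^\infty$ convergence of the combustion rate, which is exactly where \eqref{hyp-arrh-6} enters. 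You instead rescale at second order, $W^\varepsilon=v^\varepsilon(\varepsilon\cdot)/\varepsilon^2$, obtain a uniform $W^{2,\infty}$ bound from \eqref{error-estimate} and the rescaled ODE, extract a $C^1$-limit by Arzel\`a--Ascoli, and identify it distributionally (only $L^1(0,1)$ convergence of $R(\cdot,T^\varepsilon(\cdot))$ is needed, as in Lemma \ref{lem-corrector}); uniqueness of the periodic zero-mean solution of the limit ODE then upgrades subsequential convergence to convergence of the whole family. The trade-off: the paper's computation is subsequence-free and pinpoints precisely which norm of $F_\varepsilon$ controls which norm of the error, while your compactness argument is softer but, as a bonus, already gives \emph{uniform} convergence $W^\varepsilon\to Q$ on $\R$, hence $\sup_y|v^\varepsilon(y)-\varepsilon^2Q(y/\varepsilon)|=\co(\varepsilon^2)$ \emph{without} invoking \eqref{hyp-arrh-6}; your final paragraph, where you use \eqref{hyp-arrh-6} to get $W^\varepsilon_{zz}\to Q_{zz}$ in $L^\infty$ and integrate twice, is correct but only needed if you want convergence at the level of second derivatives (i.e., in $W^{2,\infty}$), which is more than the statement asserts. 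Two small points worth stating explicitly if you write this up: the identification step should be phrased against test functions (pairing $W^\varepsilon_{zz}$ with $\varphi$ and integrating by parts), since a priori you only control $W^\varepsilon_{zz}$ weakly; and in the $L^p_{\loc}$ step, note that periodicity lets you pass from the bound on one period to any bounded interval, exactly as the paper does with $r_\varepsilon=\varepsilon\,\ulcorner r/\varepsilon\urcorner$.
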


\begin{remark}
\begin{enumerate}[label={{\rm (\roman*)}}]
\item The limit in $L^p_\loc(\R)$ has to be understood as follows: For any fixed $p \in [1,+\infty)$ and $r>0$,
$$
\frac{1}{\varepsilon^2} \left(\int_{-r}^r |{\co}(\varepsilon^2)|^p \, \dif y\right)^{\frac{1}{p}} \to 0
$$
(the limit being uniform neither in $p$ nor in $r$).
\item The additional assumption \eqref{hyp-arrh-6} is satisfied by the combustion rate in \eqref{combustion-type} (provided that $A$ and $E$ are bounded).
\item The profile $Q$ is not trivial if $R\left(\cdot,\frac{\overline{g}}{\overline{b}}\right)$ is not a constant.
\end{enumerate}
\end{remark}

\begin{proof}
Note that $Q$ is well-defined, because the function
$$
z \mapsto R\left(z,\frac{\overline{g}}{\overline{b}}\right)-\int_0^1 R\left(t,\frac{\overline{g}}{\overline{b}}\right) \dif t
$$ 
is $1$-periodic with a zero mean value. Let us divide the rest of the proof in two cases.

\medskip

{\bf 1.} {\it Expansion in $L^p_{\loc}$ for $p \neq +\infty$.} Let us use again $w^{\varepsilon}(z)=\frac{v^{\varepsilon}(\varepsilon \, z)}{\varepsilon}$ (as in \eqref{ansatz}). We can rewrite its equation \eqref{Homog-eqn-weps} as
$$
\frac{\mu}{\varepsilon} \, w_{zz}^{\varepsilon} =  F_\varepsilon(z):=-c^\varepsilon \left(1+(w_{z}^{\varepsilon})^2\right) +R\left(z,u^{\varepsilon}(v^{\varepsilon}(\varepsilon
\, z),\varepsilon \, z)\right) \left(1+(w_{z}^{\varepsilon})^2\right)^{\frac{3}{2}}.
$$
We claim that for any $p \in [0,+\infty)$
\begin{equation}\label{claim-expansion}
F_\varepsilon(\cdot) \to  R\left(\cdot,\frac{\overline{g}}{\overline{b}}\right)-\int_0^1 R\left(t,\frac{\overline{g}}{\overline{b}}\right) \dif t \quad \mbox{in} \quad L^p(0,1),
\end{equation}
as $\varepsilon \downarrow 0$.
To show this claim, recall that $c^\varepsilon \to \int_0^1 R\left(t,\frac{\overline{g}}{\overline{b}}\right) \dif t$ and $\|w^\varepsilon_z\|_\infty \to 0$, by Theorem \ref{Theorem-homogenization} and \eqref{error-estimate}.
The case $p=1$ is thus a consequence of Lemma \ref{lem-corrector}\ref{conv-combustion-corrector} (whose result holds for the whole family $\{\varepsilon\}$, thanks to the same arguments). Since $R$ is bounded by $R_M$, this also implies the convergence for any $p \neq +\infty$ by interpolation.

With \eqref{claim-expansion} in hands, we can write that
\begin{equation*}
\begin{split}
& \frac{\dif^2}{\dif y^2} \left\{v^\varepsilon(y) - \varepsilon^2 \, Q \left(\frac{y}{\varepsilon}\right) \right\} \\
& = \frac{w^\varepsilon_{zz}\left(\frac{y}{\varepsilon}\right)}{\varepsilon}-Q_{zz} \left(\frac{y}{\varepsilon}\right)\\
& = \frac{1}{\mu} \, F_\varepsilon\left(\frac{y}{\varepsilon}\right)-\frac{1}{\mu} \left\{ R\left(\frac{y}{\varepsilon},\frac{\overline{g}}{\overline{b}}\right)-\int_0^1 R \left(t,\frac{\overline{g}}{\overline{b}}\right) \dif t \right\}
\end{split}
\end{equation*}
and conclude that
$$
\frac{1}{\varepsilon} \int_0^{\varepsilon} \left|\frac{\dif^2}{\dif y^2} \left\{v^\varepsilon(y) - \varepsilon^2 \, Q \left(\frac{y}{\varepsilon}\right)\right\}\right|^p \dif y={\co}_\varepsilon(1)
$$
(with ${\co}_\varepsilon(1)$ depending on $p$). Let us now apply the Poincar\'e-Wirtinger's inequality which states that $\int_0^\varepsilon |f|^p \leq C \, \varepsilon^p \, \int_0^\varepsilon |f_y|^p$ for any $\varepsilon$-periodic $f \in W^{1,p}(\R)$ with a zero mean value (and for some $C=C(p)$). Applying this two times, we get that
$$
\frac{1}{\varepsilon} \int_0^{\varepsilon} \left|v^\varepsilon(y) - \varepsilon^2 \, Q \left(\frac{y}{\varepsilon}\right)\right|^p \dif y=\varepsilon^{2 \, p} \, {\co}_\varepsilon(1).
$$
Given then $r>0$, we take $r_\varepsilon=\varepsilon \, \ulcorner \frac{r}{\varepsilon} \urcorner \geq r$ (this symbol denoting the upper integer part). We can then rewrite the mean value above over the larger period $2 \, r_\varepsilon$ as follows:
$
\frac{1}{\varepsilon} \int_0^{\varepsilon} \left|\dots\right|^p \dif y=\frac{1}{2 \, r_\varepsilon}  \int_{-r_\varepsilon}^{r_\varepsilon} \left|\dots\right|^p \dif y.
$
This easily implies the desired result.

\medskip

{\bf 2.} {\it Expansion in $L^\infty$.} Let us prove that \eqref{claim-expansion} holds in $L^\infty(\R)$ if in addition \eqref{hyp-arrh-6} holds. Set
$$
T_\varepsilon:=\min_t u^{\varepsilon}(v^{\varepsilon}(\varepsilon
\, t),\varepsilon \, t) \quad \mbox{and} \quad T^\varepsilon:=\max_t u^{\varepsilon}(v^{\varepsilon}(\varepsilon
\, t),\varepsilon \, t).
$$
Assumption \ref{hyp-arrh-1} then gives
$$
R(z,T_\varepsilon) \leq  R\left(z,u^{\varepsilon}(v^{\varepsilon}(\varepsilon
\, z),\varepsilon \, z)\right) \leq R(z,T^\varepsilon)
$$
for almost every $z$.  Now recalling that $T_\varepsilon,T^\varepsilon \to \frac{\overline{g}}{\overline{b}}$ (again by Theorem \ref{Theorem-homogenization}), we have by the assumption \eqref{hyp-arrh-6} 
$$
R\left(\cdot,u^{\varepsilon}(v^{\varepsilon}(\varepsilon
\, \cdot),\varepsilon \, \cdot)\right) \to R \left(\cdot,\frac{\overline{g}}{\overline{b}}\right) \quad \mbox{in} \quad L^\infty(\R).
$$
This implies \eqref{claim-expansion} in $L^\infty(\R)$ (since $c^\varepsilon \to \int_0^1 R\left(t,\frac{\overline{g}}{\overline{b}}\right) \dif t$ and $\|w^\varepsilon_z\|_\infty \to 0$) and the rest of the proof is the same (applying Poincar\'e-Wirtinger for $p=+\infty$).
\end{proof}

\section{Concluding remarks}\label{conclusion}

Let us conclude by a synthesis on the propagation governed by the typical Arrhenius law in \eqref{combustion-type}, 
$$
R(y,T)=A(y) \, e^{-\frac{E(y)}{T}},
$$ 
with $R$ the combustion rate, $T$ the temperature, $A$ a prefactor and $E$ related to the activation energy. We have established the existence of a travelling wave solution ``speed-front-temperature'' provided that the period of the medium is small enough, see Theorem \ref{existence-small-Y}. For large periods, such waves still exist up to slightly modifying $R$ at some neighborhood of $T=0$,
see Theorem \ref{existence-tw}. During the homogenization of these waves as the period $\varepsilon$ tends to zero, we have allowed the curvature coefficient $\mu=\mu(\varepsilon)$ to depend on $\varepsilon$ too (this parameter being related to the surface tension effects). 
Then the limiting speed of propagation $c^0$ is entirely determined by the value of the curvature regime $\lambda=\lim_{\varepsilon \downarrow 0} \frac{\mu(\varepsilon)}{\varepsilon}$, see Theorem \ref{Theorem-homogenization-lambda} and Remark \ref{rem-adherence-value}.  This speed $c^0=c^0(\lambda)$ is decreasing in $\lambda$ and takes the following minimal and maximal values:
$$
c^0(+\infty)=\overline{A(\cdot) \, e^{-E(\cdot) \, \frac{\overline{b}}{\overline{g}}}} \quad \mbox{and} \quad c^0(0)=\esssup_z A(z) \, \, e^{-E(z) \, \frac{\overline{b}}{\overline{g}}},
$$
with $b$ the heat capacity, $g$ the heat release, $\overline{b}$ and $\overline{g}$ their mean values, etc., see Theorem \ref{Theorem-C-monotone} and Remark \ref{rem-adherence-value}. Here the constant $\frac{\overline{g}}{\overline{b}}>0$ is the limiting temperature at the front. Finally  the profile $v^\varepsilon$ of the heterogeneous flame front  satisfies:
\begin{equation}\label{expansion-last}
v^\varepsilon(y)=\varepsilon \, w \left(\frac{y}{\varepsilon}\right) +\co(\varepsilon),
\end{equation}
where $w=w(z)$ is a corrector entirely determined by $\lambda$, see Theorem \ref{thm-expansion}. At the macroscopic level, the front's profile is a straight line (normalized to zero without loss of generality). At the microscopic level, its oscillations are entirely described by $w$ which solves the geometric equation
\begin{equation}\label{corrector-last}
-c_0+A(z) \, e^{-E(z) \, \frac{\overline{b}}{\overline{g}}} \sqrt{1+w_z^2}=\lambda \, \frac{w_{zz}}{1+w_z^2}.
\end{equation}
If $\mu$ is fixed, then the profile $w$ is also a straight line and 
$$
v^\varepsilon(y)=\varepsilon^2 \, Q \left(\frac{y}{\varepsilon}\right) +\co(\varepsilon^2),
$$ 
where the profile $Q$ is given by:
$$
Q(z)=\frac{1}{\mu} \left\{P(z)-\overline{P}\right\}
$$ 
with
\begin{equation*}
P(z):=\int_0^z \int_0^{t} \left(A(s) \, e^{-E(s) \, \frac{\overline{b}}{\overline{g}}}-c^0(+\infty)\right)  \dif s \, \dif t,
\end{equation*}
see Theorem \ref{thm-expansion-bis}.

We end up with an open question. So far we have been able to give ansatz of the front's profile for all values of $\lambda$, including $+\infty$, but not for $\lambda=0$ where the expansion \eqref{expansion-last} is not clear. This question is related to the passage to the limit in \eqref{corrector-last} as $\lambda \downarrow 0$. By the result of \cite{ChNa97}, it is known that some sequence converges towards a viscosity solution of the Hamilton-Jacobi equation
\begin{equation*}
-c+A(z) \, e^{-E(z) \, \frac{\overline{b}}{\overline{g}}} \sqrt{1+w_z^{2}}=0
\end{equation*}
(with $c=c^0(\lambda=0)$). Unfortunately, the convergence of the whole family is not clear because this solution is not unique (even up to the addition of a constant). The problem is that $w_z^2$ is unique but not $w_z$. The identification of the first-order term in \eqref{expansion-last} for $\lambda=0$ is thus open and probably difficult (to the best of our knowledge).  

\appendix

\section{Technical proofs}

\subsection{Proof of the claim in Remark~\ref{var-equiv-dis}} \label{tech-app}

\begin{proof}[Proof that ``\eqref{var-eqn} $\Rightarrow$ \eqref{dis-eqn}'']
Let $\varphi \in C^1_c(\R^2)$ and define a partition of unity
$\{\theta_{i}\}_{i \in\mathbb{Z} }$ such that for all~$i \in\mathbb{Z} $,
\begin{equation*}
\begin{cases}
0 \leq\theta_{i} \in C_{c}^1(\mathbb{R} ),\\
\supp \theta_{i} \subset\left( \frac{i}{2},\frac{i+2}{2}\right) ,\\
\theta_{i+1}(\cdot)=\theta_{i}\left( \cdot-\frac{1}{2}\right) ,\\
\sum_{i \in\mathbb{Z} } \theta_{i}=1.
\end{cases}
\end{equation*}
Note that the sum at any value is locally taken only on two consecutive indices. Let us define
\begin{equation*}
\varphi_{i}(x,y):=\varphi(x,y) \, \theta_{i}(y),
\end{equation*}
so that~$\varphi=\sum_{i \in\mathbb{Z} } \varphi_{i}$. By linearity, it
suffices to prove~\eqref{dis-eqn} for each~$\varphi_{i}$. So let $i
\in\mathbb{Z}$ and consider $w \in C^1(\mathbb{R} ^{2})$ such that
$w=\varphi_{i}$ on~$\left\{ \frac{i}{2}<y<\frac{i+2}{2} \right\} $ and
extended to $y \in\mathbb{R} $ by periodicity. It is clear that~$w_{|_{\Omega
}} \in H^{1}_{\diese}(\Omega)$. In particular, it can be
chosen in~\eqref{var-eqn}, which exactly gives~\eqref{dis-eqn}.
\end{proof}

\begin{proof}[Proof that ``\eqref{dis-eqn} $\Rightarrow$ \eqref{var-eqn}'']
Conversely, let us consider~$w \in H^{1}_{\diese}(\Omega)$.
Extending it by reflexion if necessary, we can consider that~$w \in
H^{1}_{\diese}(\mathbb{R} ^{2})$. Let us define~$w_{i} \in
H^{1}(\mathbb{R} ^{2})$ by~
\begin{equation*}
w_{i}(x,y):=w(x,y) \, \theta_{i}(y).
\end{equation*}
By density of~$C^1_{c}(\mathbb{R} ^{2})$ in~$H^{1}(\mathbb{R} ^{2}%
)$,~\eqref{dis-eqn} holds true for each~$w_{i}$ whenever it is so for test
functions. Since $\supp w_{i} \subset\left( \frac{i}{2},\frac{i+2}{2}\right)
$, we get:
\begin{equation}
\label{dis-tech}%
\begin{split}
&  \int_{\Omega\cap\left\{ 0<y<\frac{3}{2} \right\} } \left( c \, b \, u_{x}
\, (w_{0}+w_{1}) + a \, \nabla u \, \nabla(w_{0}+w_{1}) \right) \\
&  =\int_{\Gamma\cap\left\{ 0<y<\frac{3}{2} \right\} } \frac{c \, g}%
{\sqrt{1+v_{y}^{2}}} \, (w_{0}+w_{1}).
\end{split}
\end{equation}
Let us rewrite these terms as integrals over $\left\{ 0<y<1\right\} $. If
$\frac{1}{2}<y<1$, then we use that $w=\sum_{i \in\mathbb{Z} } w_{i}%
=w_{0}+w_{1}$ (for such $y$). We get:
\begin{equation}
\label{dis-tech-bis}%
\begin{split}
&  \int_{\Omega\cap\left\{ \frac{1}{2}<y<1 \right\} } \left( c \, b \, u_{x}
\, (w_{0}+w_{1}) + a \, \nabla u \, \nabla(w_{0}+w_{1}) \right) \\
&  = \int_{\Omega\cap\left\{ \frac{1}{2}<y<1 \right\} } \left( c \, b \, u_{x}
\, w + a \, \nabla u \, \nabla w \right) .
\end{split}
\end{equation}
For the remaining $y$, we use that $u$ is $Y$-periodic and $w_{1}%
(\cdot)=w_{-1}\left( \cdot-1\right) $ to show that
\begin{equation*}%
\begin{split}
\int_{\Omega\cap\left\{ 1<y<\frac{3}{2} \right\} } \left( c \, b \, u_{x} \,
w_{1} + a \, \nabla u \, \nabla w_{1} \right) = \int_{\Omega\cap\left\{
0<y<\frac{1}{2} \right\} } \left( c \, b \, u_{x} \, w_{-1} + a \, \nabla u \,
\nabla w_{-1} \right) .
\end{split}
\end{equation*}
Using in addition that $w_{0}=0$ on $\left\{ 1<y<\frac{3}{2} \right\} $ and
$w_{1}=0$ on $\left\{ 0<y<\frac{1}{2} \right\} $, we deduce that
\begin{equation}
\label{dis-tech-bis-bis}%
\begin{split}
&  \int_{\Omega\cap\left( \left\{ 0<y<\frac{1}{2} \right\}  \cup\left\{
1<y<\frac{3}{2} \right\} \right) } \left( c \, b \, u_{x} \, (w_{0}+w_{1}) + a
\, \nabla u \, \nabla(w_{0}+w_{1}) \right) \\
&  = \int_{\Omega\cap\left\{ 0<y<\frac{1}{2} \right\} } \left( c \, b \, u_{x}
\, (w_{0}+w_{-1}) + a \, \nabla u \, \nabla(w_{0}+w_{-1}) \right) \\
&  = \int_{\Omega\cap\left\{ 0<y<\frac{1}{2} \right\} } \left( c \, b \, u_{x}
\, w + a \, \nabla u \, \nabla w \right) .
\end{split}
\end{equation}
The last line is obtained from similar arguments as for \eqref{dis-tech-bis}.
Adding \eqref{dis-tech-bis} and \eqref{dis-tech-bis-bis}, we conclude that
\begin{equation*}%
\begin{split}
&  \int_{\Omega\cap\left\{ 0<y<\frac{3}{2} \right\} } \left( c \, b \, u_{x}
\, (w_{0}+w_{1}) + a \, \nabla u \, \nabla(w_{0}+w_{1}) \right) \\
&  = \int_{\Omega\cap\left\{ 0<y<1 \right\} } \left( c \, b \, u_{x} \, w + a
\, \nabla u \, \nabla w \right) .
\end{split}
\end{equation*}
We show in the same way that
\begin{equation*}%
\int_{\Gamma\cap\left\{ 0<y<\frac{3}{2} \right\} } \frac{c \, g}%
{\sqrt{1+v_{y}^{2}}} \, (w_{0}+w_{1})
= \int_{\Gamma\cap\left\{ 0<y<1 \right\} } \frac{c \, g}{\sqrt{1+v_{y}^{2}%
}} \, w.
\end{equation*}
We thus complete the proof of~\eqref{var-eqn} from \eqref{dis-tech}.
\end{proof}

\subsection{Proof of Theorem~\ref{T-well-posed}: Well-posedness of the temperature}\label{app-T-well-posed}

We will apply
standard techniques from variational analysis \cite{Bre83}. The main difficulty is the lack of a Poincar\'e's inequality, which will be compensated by the explicit bounds in \eqref{Linfty-esti}. Let us give some details for completeness sake. 

Given $\alpha>0$, we introduce the following auxiliary
problem:
\begin{equation}
\label{pert-pb}
\begin{cases}
\mbox{find $u_\alpha \in H_{\diese}^1(\Omega)$ such that}\\
a_{\alpha}(u_{\alpha},w)=l(w) \quad\forall w \in H_{\diese}^{1}(\Omega),
\end{cases}
\end{equation}
where
\begin{align*}
a_{\alpha}(u_{\alpha},w)  &  :=  \frac{\alpha}{Y} \int_{\Omega_{\diese}}
u_{\alpha}\,w+\frac{1}{Y} \int_{\Omega_{\diese}} \left( c \, b \,
(u_{\alpha})_{x} \, w + a \, \nabla u_{\alpha}\, \nabla w \right) ,\\
l(w)  &  :=  \frac{1}{Y} \int_{\Gamma_{\diese}} \frac{c \, g}{\sqrt
{1+v_{y}^{2}}} \, w.
\end{align*}
Recall that the $L^2$-norm and $H^1$-semi-norm for periodic spaces have been defined by 
\begin{equation*}
\|w\|_{L_{\diese}^{2}(\Omega)}=\left(  \frac{1}{Y}%
\int_{\Omega_{\diese}}w^{2}\right)  ^{\frac{1}{2}}%
\quad \mbox{and} \quad |w|_{H_{\diese}^{1}(\Omega)}=\left(  \frac{1}{Y}%
\int_{\Omega_{\diese}}|\nabla w|^{2}\right)  ^{\frac{1}{2}%
}.
\end{equation*}
The proof of Theorem \ref{T-well-posed} will follow from the passage to the limit as $\alpha\downarrow0$. Let us first give a few lemmas. We start by a well-posedness result.

\begin{lemma}
\label{pert-pb-well-posed} 
Under the assumptions of Theorem \ref{T-well-posed}, Problem \eqref{pert-pb} admits
a unique solution, for each $\alpha>0$.
\end{lemma}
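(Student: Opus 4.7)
The plan is to apply the Lax--Milgram theorem to the bilinear form $a_\alpha$ and the linear form $l$ on the Hilbert space $H^1_\diese(\Omega)$. I would proceed in three steps: (i) continuity of $a_\alpha$; (ii) coercivity of $a_\alpha$; (iii) continuity of $l$.

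First, continuity of $a_\alpha$ on $H^1_\diese(\Omega) \times H^1_\diese(\Omega)$ is immediate from \ref{hyp-param-1}--\ref{hyp-param-2}: the $L^\infty$ bounds $\|a\|_\infty\le a_M$, $\|b\|_\infty\le b_M$ together with the Cauchy--Schwarz inequality give $|a_\alpha(u,w)|\le \bigl(\alpha + c\,b_M + a_M\bigr)\|u\|_{H^1_\diese(\Omega)}\|w\|_{H^1_\diese(\Omega)}$.

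Next, for coercivity, write
\[
a_\alpha(u,u) = \alpha\,\|u\|_{L^2_\diese(\Omega)}^2 + \frac{1}{Y}\int_{\Omega_\diese} a\,|\nabla u|^2 + \frac{1}{Y}\int_{\Omega_\diese} c\,b\,u_x\,u.
\]
The first two terms are bounded below by $\min(\alpha,a_m)\|u\|_{H^1_\diese(\Omega)}^2$. For the third, I would use the identity $u\,u_x = \tfrac12(u^2)_x$ together with the fact that $u(\cdot,y)\to 0$ as $x\to-\infty$ (in the $H^1_\diese$ sense, after integration by parts on $(-\infty,v(y))$), to rewrite it as a nonnegative trace term
\[
\frac{1}{Y}\int_{\Omega_\diese} c\,b\,u_x\,u = \frac{c}{2Y}\int_0^Y b(y)\,u(v(y),y)^2\,\dif y \;\ge\; 0,
\]
since $c,b>0$. (The decay of $u$ at $-\infty$ for a generic $u\in H^1_\diese(\Omega)$ can be justified by density of compactly supported functions or by directly approximating and passing to the limit.) Thus $a_\alpha(u,u)\ge \min(\alpha,a_m)\|u\|_{H^1_\diese(\Omega)}^2$, which is the coercivity.

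Finally, continuity of $l$ follows from the trace operator: using $\int_{\Gamma_\diese}\frac{c\,g}{\sqrt{1+v_y^2}}\,w = \int_0^Y c\,g(y)\,w(v(y),y)\,\dif y$ together with \ref{hyp-param-2} and Cauchy--Schwarz yields $|l(w)|\le c\,g_M\,\|w_{|\Gamma}\|_{L^2_\diese(\Gamma)}$, and the trace estimate recalled in Section~\ref{Sobolev-preliminaries} gives $\|w_{|\Gamma}\|_{L^2_\diese(\Gamma)}\le C(Y,\|v_y\|_\infty)\|w\|_{H^1_\diese(\Omega)}$. Then Lax--Milgram provides a unique $u_\alpha\in H^1_\diese(\Omega)$ solving \eqref{pert-pb}.

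The main (minor) obstacle is justifying the integration-by-parts identity for the convection term, which requires the correct sense of the decay $u\to 0$ at $x\to-\infty$; once this boundary contribution is identified as nonnegative, coercivity is painless and the rest is a standard Lax--Milgram application.
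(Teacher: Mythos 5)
Your proposal is correct and follows essentially the same route as the paper: a Lax--Milgram application whose only real point is the coercivity identity $\frac1Y\int_{\Omega_\diese}\bigl(c\,b\,w_x\,w+a\,|\nabla w|^2\bigr)=\frac{c}{2Y}\int_0^Y b(y)\,w(v(y),y)^2\,\dif y+\frac1Y\int_{\Omega_\diese}a\,|\nabla w|^2\ge a_m\,|w|^2_{H^1_\diese(\Omega)}$, with the $\alpha$-term supplying the missing $L^2$ control. The paper simply records this identity and omits the (routine) continuity checks you spell out, so there is nothing to add.
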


\begin{proof}
To use the Lax-Milgram's theorem, it suffices to check
the coercivity of $a_{\alpha}$ which follows from:
\begin{equation}
\label{coer-esti}\frac{1}{Y} \int_{\Omega_{\diese}} \left( c \, b \, w_{x}
\, w + a \, |\nabla w|^{2} \right)  = \underbrace{\frac{1}{Y} \int_{\Gamma
_{\diese}} c \, b \, \frac{w^{2}}{2}}_{\geq0} + \underbrace
{\frac{1}{Y} \int_{\Omega_{\diese}} a \, |\nabla w|^{2}}_{\geq a_{m} \,
|w|_{H^{1}_{\diese}(\Omega)}^2} \quad \forall w \in H^{1}_{\diese}(\Omega).
\qedhere
\end{equation}
\end{proof}


We proceed by a nonnegativity lemma.

\begin{lemma}
\label{nonnegativity} 
Assume the hypotheses of Theorem \ref{T-well-posed} and
let $\alpha
\geq0$ and $u$ be such that
\begin{equation*}%
\begin{cases}
u \in H_{\diese}^{1}(\Omega),\\
a_{\alpha}(u,w) \geq0 \quad\forall w \in H_{\diese}%
^{1}(\Omega) \mbox{ with } w \geq0.
\end{cases}
\end{equation*}
Then~$u \geq0$.
\end{lemma}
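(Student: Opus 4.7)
The plan is to test the hypothesis against $w = u^- := \max(-u, 0)$, which is the classical choice for sign lemmas on weak solutions of elliptic problems. First I would check that $u^- \in H^1_\diese(\Omega)$ with $u^- \geq 0$ (so $u^-$ is admissible), and that the standard chain rules hold: $u \, u^- = -(u^-)^2$, $u_x \, u^- = -u^-_x \, u^-$, and $\nabla u \cdot \nabla u^- = -|\nabla u^-|^2$ almost everywhere on $\Omega_\diese$. Plugging $w = u^-$ into the definition of $a_\alpha$ and using these identities should express $a_\alpha(u, u^-)$ as a sum of three nonpositive terms.

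The delicate piece will be the advection contribution $-\frac{c}{Y} \int_{\Omega_\diese} b \, u^-_x \, u^-$. My plan is to rewrite $u^-_x \, u^- = \tfrac{1}{2}((u^-)^2)_x$ and integrate in $x$ first, which calls for care since $\Omega_\diese$ extends to $-\infty$. The way I would justify this is to observe that for almost every $y \in (0, Y)$ the slice $u^-(\cdot, y)$ belongs to $H^1((-\infty, v(y)))$ (both $u^-$ and $u^-_x$ being square integrable on that half line), and therefore tends to $0$ as $x \to -\infty$. This produces
\begin{equation*}
-\frac{c}{Y} \int_{\Omega_\diese} b \, u^-_x \, u^- \;=\; -\frac{c}{2Y} \int_0^Y b(y) \, (u^-)^2(v(y), y) \, \dif y \;\leq\; 0,
\end{equation*}
since $b, c \geq 0$. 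Combined with the obvious sign of the other two pieces, I obtain $a_\alpha(u, u^-) \leq 0$. The hypothesis $a_\alpha(u, u^-) \geq 0$ then forces each nonpositive term to vanish.

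From the vanishing of $\frac{1}{Y}\int_{\Omega_\diese} a \, |\nabla u^-|^2$ and the uniform ellipticity $a \geq a_m > 0$, I would conclude $\nabla u^- = 0$ almost everywhere, so $u^-$ is constant on the connected set $\Omega$. Finally, the membership $u^- \in L^2_\diese(\Omega)$ together with the infinite two-dimensional Lebesgue measure of $\Omega_\diese$ (which extends to $x = -\infty$) forces this constant to be zero. Hence $u^- \equiv 0$, i.e., $u \geq 0$, as desired. The only real obstacle is the rigorous integration by parts in $x$ on an unbounded domain, and this is precisely what the $H^1$-decay of slices at $-\infty$ settles; everything else is the standard energy manipulation.
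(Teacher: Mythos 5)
Your proof is correct and follows essentially the same route as the paper: test the inequality with $w=u^-$, use the chain rules to make the $\alpha$-term and the diffusion term nonpositive, and integrate the advection term in $x$ so that it reduces to a signed trace term on $\Gamma_\diese$ (justified by the decay of the slices $u^-(\cdot,y)$ at $x=-\infty$), forcing every term to vanish. Your concluding step for $\alpha=0$ (constancy of $u^-$ from $\nabla u^-=0$ plus square integrability on the unbounded region) is exactly the argument the paper compresses into its final remark.
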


\begin{proof}
Let us take $w=u^{-}:=-\min\{u,0\}$ and show that $u^- = 0$. Since $\nabla u^{-}=-\mathbf{1}_{u<0} \, \nabla u$, we have:
\begin{equation*}
-\alpha\, \|u^{-}\|^{2}_{L^{2}_{\diese}(\Omega)}
-\frac{1}{Y}\int_{\Omega_{\diese}} a \, |\nabla u^{-}|^{2}\geq\underbrace{\frac{1}{Y}\int_{\Omega_{\diese}} c
\, b \, u^{-}_{x} \, u^{-}}_{=\frac{1}{Y}\int_{\Gamma_{\diese}} c \, b \,
\frac{(u^{-})^{2}}{2} \geq0}.
\end{equation*}
This completes the proof even for $\alpha=0$ since $u^-$ is (square) integrable as $x$ approaches $-\infty$.
\end{proof}

We finally give some estimates on $u_{\alpha}$.

\begin{lemma}
\label{esti-pert-pb} Let $\alpha>0$ and $u_{\alpha}$ be given by Lemma
\ref{pert-pb-well-posed}. Then
\begin{equation*}
|u_{\alpha}|_{H^{1}_{\diese}(\Omega)}^{2} \leq\frac{2 \, c \,
g_{M}^{2}}{a_{m} \, b_{m}}%
\end{equation*}
and for almost every $(x,y) \in\Omega$,
\begin{equation*}
\frac{g_{m} \, a_{m}}{a_{M} \, b_{M}}\, e^{c \, \frac{b_{M}}{a_{m}} \left(
x-\|v\|_{\infty}\right) }\leq u_{\alpha}(x,y) \leq\frac{g_{M} \, a_{M}}{a_{m}
\, b_{m}} \, e^{c \, \frac{b_{m}}{a_{M}} \left( x+\|v\|_{\infty}\right) }.
\end{equation*}

\end{lemma}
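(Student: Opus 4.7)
The plan is to exploit the variational formulation of the $\alpha$-perturbed problem: obtain the $H^1$-semi-norm bound by the standard energy test $w=u_\alpha$, and the pointwise estimates by comparison with explicit one-dimensional exponential super- and sub-solutions, together with the non-negativity principle of Lemma~\ref{nonnegativity}.

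For the $H^1$ bound I take $w=u_\alpha$ in \eqref{pert-pb}. The computation behind the coercivity estimate \eqref{coer-esti} (integration in $x$ using the decay as $x\to-\infty$) gives
\begin{equation*}
\frac{1}{Y}\int_{\Omega_\diese} c\,b\,(u_\alpha)_x\,u_\alpha = \frac{c}{2Y}\int_0^Y b(y)\,u_\alpha(v(y),y)^2\,\dif y,
\end{equation*}
so the identity $a_\alpha(u_\alpha,u_\alpha)=l(u_\alpha)$ becomes, with $w(y):=u_\alpha(v(y),y)$,
\begin{equation*}
\alpha\,\|u_\alpha\|_{L_\diese^2}^2 + \frac{c}{2Y}\int_0^Y b\,w^2 + a_m\,|u_\alpha|_{H_\diese^1}^2 \le \frac{c}{Y}\int_0^Y g\,w.
\end{equation*}
The Young inequality $c\,g\,w \le \tfrac{c}{2}\,b\,w^2 + \tfrac{c}{2}\,g^2/b$ absorbs the boundary quadratic term into the left-hand side and leaves $a_m\,|u_\alpha|_{H_\diese^1}^2 \le c\,g_M^2/(2\,b_m)$, which is stronger than the stated bound.

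For the pointwise upper estimate I set $U(x) := \tfrac{g_M\,a_M}{a_m\,b_m}\,e^{c\,(b_m/a_M)(x+\|v\|_\infty)}$ and verify via the same integration by parts used in Remark~\ref{var-equiv-dis} that $U-u_\alpha \in H_\diese^1(\Omega)$ satisfies the hypothesis of Lemma~\ref{nonnegativity}. This reduces to two checks:
\begin{enumerate}[label={{\rm (\roman*)}}]
\item the bulk super-solution inequality $\alpha\,U + c\,b\,U_x - a\,U_{xx} \ge 0$: with $U_x=(c\,b_m/a_M)\,U$ and $U_{xx}=(c\,b_m/a_M)^2\,U$ this reduces to $\alpha + (c^2 b_m/a_M)(b - a\,b_m/a_M)\ge 0$, true since $b\ge b_m$, $a\le a_M$;
\item the boundary inequality $a(y)\,U_x(v(y))\ge c\,g(y)$: using $v(y)+\|v\|_\infty\ge 0$ and $a(y)\ge a_m$ we get $a\,U_x|_\Gamma \ge c\,g_M \ge c\,g(y)$.
\end{enumerate}
Lemma~\ref{nonnegativity} then yields $u_\alpha\le U$.

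The lower bound is the main obstacle. The natural candidate $L(x):=\tfrac{g_m a_m}{a_M b_M}\,e^{c\,(b_M/a_m)(x-\|v\|_\infty)}$ satisfies the boundary sub-solution inequality $a\,L_x\le c\,g$ by the symmetric bookkeeping, but the bulk inequality $\alpha\,L + c\,b\,L_x - a\,L_{xx}\le 0$ \emph{fails} by the positive term $\alpha\,L$: with exponent $\beta=c\,b_M/a_m$ the quadratic part vanishes in the worst case $(a,b)=(a_m,b_M)$, leaving $\alpha\,L>0$. The fix is to enlarge the exponent to $\beta_\alpha:=\bigl(c\,b_M+\sqrt{c^2 b_M^2+4\,a_m\,\alpha}\bigr)/(2\,a_m)$, the positive root of $a_m\,\beta^2-c\,b_M\,\beta-\alpha=0$, and set $K_\alpha:=c\,g_m/(a_M\,\beta_\alpha)$. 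The function $L_\alpha(x):=K_\alpha\,e^{\beta_\alpha(x-\|v\|_\infty)}$ is then a genuine sub-solution for every $\alpha>0$, so Lemma~\ref{nonnegativity} applied to $u_\alpha-L_\alpha$ gives $u_\alpha\ge L_\alpha$. Since $\beta_\alpha\downarrow c\,b_M/a_m$ and $K_\alpha\uparrow g_m\,a_m/(a_M\,b_M)$ as $\alpha\downarrow 0$, the clean bound stated in the lemma is recovered in this limit, which is precisely how it enters the passage to the limit in the proof of Theorem~\ref{T-well-posed}.
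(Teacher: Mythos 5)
Your proposal follows the paper's own strategy (energy test $w=u_\alpha$, explicit exponential barriers, comparison through Lemma~\ref{nonnegativity}); the $H^1$ step differs only in that the paper bounds the boundary term by Cauchy--Schwarz to get exactly the constant $\frac{2\,c\,g_M^2}{a_m\,b_m}$, while your Young-inequality bookkeeping yields a constant four times smaller, which of course implies the stated estimate, and your upper barrier is exactly the paper's supersolution. The genuinely different --- and in fact better --- part is the lower bound. The paper dismisses it with ``the proof for the lower bound being the same,'' i.e.\ it uses the $\alpha$-independent barrier $L(x)=\frac{g_m a_m}{a_M b_M}e^{c\frac{b_M}{a_m}(x-\|v\|_\infty)}$, and, as you observe, the term $\alpha\,L>0$ destroys the bulk subsolution inequality in the worst case $(a,b)=(a_m,b_M)$. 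The defect is not only in the barrier argument: for constant coefficients and $v\equiv 0$ the unique solution of \eqref{pert-pb} is $u_\alpha(x)=\frac{c\,g}{a\,\beta_\alpha}e^{\beta_\alpha x}$ with $\beta_\alpha=\frac{c\,b+\sqrt{c^2b^2+4a\alpha}}{2a}>\frac{c\,b}{a}$, so $u_\alpha(0)<\frac{g}{b}$, and the $\alpha$-uniform lower bound claimed in the lemma is actually false for $\alpha>0$. Your $\alpha$-dependent exponent (the positive root of $a_m\beta^2-c\,b_M\beta-\alpha=0$) with amplitude $K_\alpha=\frac{c\,g_m}{a_M\beta_\alpha}$ is precisely the right repair: the bulk and boundary inequalities then hold, Lemma~\ref{nonnegativity} gives $u_\alpha\ge L_\alpha$, and since $L_\alpha$ increases pointwise to the stated barrier as $\alpha\downarrow 0$, the bound \eqref{Linfty-esti} for the limit $u$ still comes out of the weak passage to the limit in the proof of Theorem~\ref{T-well-posed} (for fixed $\alpha_0$ the set $\{w\ge L_{\alpha_0}\}$ is convex and closed, hence weakly closed, and one then lets $\alpha_0\downarrow0$). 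Two small remarks: before invoking Lemma~\ref{nonnegativity} you must turn the pointwise differential inequalities into the variational ones $a_\alpha(L_\alpha,w)\le l(w)\le a_\alpha(U,w)$ for $w\ge0$, which is a one-line Gauss--Green computation since the barriers depend only on $x$ (this is not what Remark~\ref{var-equiv-dis} does, though the paper is equally terse here); and strictly speaking you prove a corrected, $\alpha$-dependent version of the lemma rather than the lemma as stated --- but since the stated version is untenable, that is the right thing to do, and it is all that the sequel uses.
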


\begin{proof}
If we take~$w=u_{\alpha}$ in \eqref{coer-esti} and use the inequality
\begin{equation*}
l(u_{\alpha})=a_{\alpha}(u_{\alpha},u_{\alpha}) \geq \frac{1}{Y}\int_{\Omega
_{\diese}} \left( c \, b \, (u_{\alpha})_{x} \, u_{\alpha}+ a
\, |\nabla u_{\alpha}|^{2} \right) ,
\end{equation*}
we infer that
\begin{multline*}
\frac{c \, b_{m}}{2} \, \frac{1}{Y} \int_{0}^{Y} u_{\alpha}^{2}(v(y),y) \, \mathrm{d} y\leq
l(u_{\alpha})\\
\leq \frac{c \, g_{M}}{Y} \int_{0}^{Y}u_{\alpha}(v(y),y) \, \mathrm{d} y \leq c \, g_{M}
\left(\frac{1}{Y} \int_{0}^{Y} u_{\alpha}^{2}(v(y),y) \, \mathrm{d} y\right) ^{\frac{1}{2}},
\end{multline*}
so that $\left(\frac{1}{Y} \int_{0}^{Y} u_{\alpha}^{2}(v(y),y) \, \mathrm{d} y\right)
^{\frac{1}{2}} \leq\frac{2 \, g_{M}}{b_{m}}$ and $l(u_{\alpha}) \leq\frac{2 \,
c \, g_{M}^{2}}{b_{m}}$. Using again \eqref{coer-esti}, we get the first
desired estimate $|u_{\alpha}|_{H^{1}_{\diese}(\Omega)}^{2}
\leq\frac{a(u_{\alpha},u_{\alpha})}{a_{m}} = \frac{l(u_{\alpha})}{a_{m}}
\leq\frac{2 \, c \, g_{M}^{2}}{a_{m} \, b_{m}}.
$

For the remaining estimate, we look for sub- and supersolutions of the form
$\tilde{u}(x,y):=C_{1} \, e^{C_{2} \, x}$. Let us prove the upper bound (the
proof for the lower bound being the same). Taking~$C_{2}=\frac{c \, b_{m}%
}{a_{M}} \geq0$,~$C_{1}=\frac{c \, g_{M}}{a_{m}\, C_{2}} \, e^{C_{2} \,
\|v\|_{\infty}} \geq0$, easy computations show that
\begin{equation*}%
\begin{cases}
\alpha\, \tilde{u}+c \, b \, \tilde{u}_{x}-\Div \left(  a \, \nabla\tilde
{u}\right)  \geq0 & \quad\mbox{in} \quad \Omega,\\[1ex]
a \, \frac{\partial\tilde{u}}{\partial \nu} \geq\frac{c \, g}{\sqrt{1+v_{y}^{2}%
}} & \quad\mbox{on}  \quad \Gamma.
\end{cases}
\end{equation*}
In particular, $\tilde{u} \in C^{\infty}(\mathbb{R} ^{2}) \cap H^{1}%
_{\diese}(\Omega)$ satisfies the following variational
inequalities:
\begin{equation*}
a_{\alpha}(\tilde{u},w) \geq l(w) \quad\forall w \in H^{1}%
_{\diese}(\Omega) \mbox{ such that } w \geq0.
\end{equation*}
We complete the proof by applying Lemma~\ref{nonnegativity} to $u=\tilde
{u}-u_{\alpha}$.
\end{proof}

We can now prove the theorem.

\begin{proof}
[Proof of Theorem~\ref{T-well-posed}]The uniqueness is an immediate
consequence of Lemma~\ref{nonnegativity} (valid for $\alpha=0$). Moreover, by
Lemma \ref{esti-pert-pb}, $\{u_{\alpha}\}_{\alpha>0}$ is bounded in
$H^{1}_{\diese}(\Omega) $. Hence, by the weak-compactness
theorem, $u_{\alpha}$ weakly converges to some $u$ in $H^{1}%
_{\diese}(\Omega)$ as $\alpha \downarrow 0$ (and up to a subsequence). It is then standard
to pass the limit in \eqref{pert-pb} and get a solution of \eqref{T-eqn}. This
solution satisfies the desired estimates \eqref{H1-esti} and
\eqref{Linfty-esti}, since so does $u_{\alpha}$ (by Lemma \ref{esti-pert-pb}).
\end{proof}

\subsection[Proof of Theorem \ref{Holder-regularity}]{Proof of Theorem \ref{Holder-regularity}: H\"{o}lder regularity of the temperature}\label{app-Holder}

The proof is organized in several lemmas. The first one is a synthesis of the a priori estimates derived in Section \ref{sec-existence}. The rest is an application of the regularity theory for elliptic PDEs, see \cite{GiTr01,Nit11} and the references therein. Since we need an estimate independent of $\mu$, we will investigate the H\"older regularity for the Neuman problem \eqref{T-eqn} by assuming the boundary of the domain to be only Lipschitz. We can then use the results of \cite{Nit11}. However the latter reference, which deals with a fixed domain, does not mention how the estimates depend on the boundary. This point is unfortunately crucial for our free boundary problem. For that reason, we propose a complete proof following the arguments of \cite{Nit11} for the sake of completeness. 

Here are preliminary a priori estimates.

\begin{lemma}
\label{estimates-tw} 
Let the assumptions of Theorem \ref{Holder-regularity} hold. Then:
\begin{equation*}
c_m\leq c\leq c_M, \quad 
\|v_{y}\|_{\infty} \leq C \quad \mbox{and} \quad \|u\|_{\infty} \leq C,
\end{equation*}
for some positive constant $c_m$, $c_M$ and $C$ depending only on $Y_0$, $\lambda_0$, $R_0$, and the bounds $a_{m}$, $b_{m}$, $g_{m}$, $a_{M}$, $b_{M}$, $g_{M}$ and $R_{M}$.
\end{lemma}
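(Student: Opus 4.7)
The plan is to bootstrap the a priori estimates already collected in Section~\ref{sec-existence} into bounds that are uniform over all the hidden parameters. Since $(c,v,u)$ is a travelling wave, the pair $(c,v)$ satisfies the frozen equation~\eqref{frozen-front-eqn} with $H(y):=R(y,u(v(y),y))$, so Theorem~\ref{front-well-posed} immediately gives
$$
H_m\leq c\leq R_M\quad\text{and}\quad \|v_y\|_\infty\leq \sqrt{R_M^2/H_m^2-1},
$$
where $H_m:=\essinf_y H(y)$. By the monotonicity \ref{hyp-arrh-1} combined with \ref{hyp-arrh-4}, the whole question therefore reduces to exhibiting a uniform $\gamma>0$ with $u|_\Gamma\geq\gamma$, since this forces $H_m\geq \essinf_y R(y,\gamma)>0$.

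To produce such a $\gamma$ I would split according to the two alternatives in~\eqref{hyp-both}. Under the first one, $\mu/Y\geq\lambda_0>4R_M/\pi$, Lemma~\ref{new-lower-bound} applies directly: the conditions $Y\leq Y_0$ and $2R_M Y/\mu\leq 2R_M/\lambda_0<\pi/2$ keep both $Y$ and $\tan(2R_M Y/\mu)$ bounded, so its right-hand side is bounded below by a positive constant depending only on $Y_0$, $\lambda_0$ and the listed bounds. Under the second alternative, the majorization $\essinf_y R(y,\cdot)\geq R_0(\cdot)$ together with $R_0(T)|\ln T|\to+\infty$ implies~\eqref{hyp-lim}, so Lemma~\ref{lower-bound} yields
$$
u\geq \min\{T>0:\ln(T)\essinf_y R(y,T)\geq -C(1+Y)\},
$$
with $C=C(a_m,g_m,a_M,b_M,R_M)$. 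A set-monotonicity argument then bounds this minimum from below, uniformly in $Y\in(0,Y_0]$, by $\min\{T>0:\ln(T)R_0(T)\geq -C(1+Y_0)\}>0$: enlarging $Y$ up to $Y_0$ or replacing $\essinf_y R(y,\cdot)$ by the pointwise smaller $R_0(\cdot)$ only enlarges the defining set, hence lowers its minimum, and the positivity of the final minimum follows from $R_0(T)|\ln T|\to+\infty$. The resulting lower bound depends only on $Y_0$, $R_0$ and the listed bounds.

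With $\gamma>0$ thus secured in both cases, Theorem~\ref{front-well-posed} furnishes the desired bounds $c\geq c_m>0$ and $\|v_y\|_\infty\leq C$. By translation invariance in $y$, I may assume without loss of generality that $\overline{v}=0$; then $v$ is $Y$-periodic with zero mean, so $\|v\|_\infty\leq Y_0\|v_y\|_\infty$ is uniformly bounded as well. Plugging these bounds into the upper estimate~\eqref{Linfty-esti} of Theorem~\ref{T-well-posed} evaluated at $x\leq v(y)\leq\|v\|_\infty$ then controls $u$ on $\Omega$, and by the reflection formula~\eqref{extension} the bound propagates automatically to all of $\R^2$.

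The main obstacle is the uniformity in $Y\in(0,Y_0]$ of the lower bound $\gamma$ in the second alternative, which forces the monotonicity-in-$Y$ analysis of the minimum from Lemma~\ref{lower-bound} together with its passage to the explicit minorant $R_0(\cdot)$; the remaining steps are essentially a careful bookkeeping of previously established estimates.
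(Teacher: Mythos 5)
Your proof is correct and follows essentially the same route as the paper: split according to the two alternatives in \eqref{hyp-both}, obtain a uniform positive lower bound for $u$ on the front via Lemma \ref{new-lower-bound} or Lemma \ref{lower-bound} (your explicit monotonicity-in-$Y$ and $R_0$-comparison argument merely spells out the uniformity the paper asserts), and then conclude with Theorem \ref{front-well-posed} followed by estimate \eqref{Linfty-esti} of Theorem \ref{T-well-posed}. The only slip is the phrase ``translation invariance in $y$'': the normalization $\overline{v}=0$ comes from a shift in $x$ (replace $(c,v,u)$ by $(c,v-\overline{v},u(\cdot+\overline{v},\cdot))$, as in the proof of Lemma \ref{lower-bound}), which leaves $\|u\|_\infty$ unchanged, so the argument is unaffected.
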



\begin{proof}
We first claim that:
\begin{equation}\label{claimed-lower-bound-precise}
u \geq u_m \quad \mbox{at the front} \quad \Gamma=\{y=v(x)\},
\end{equation}
for some constant $u_m>0$ having the desired dependences (as stated in the lemma). The important assumption to show this claim is \eqref{hyp-both}. If the first condition holds, that is $\frac{\mu}{Y} \geq \lambda_0 >\frac{4 \, R_M}{\pi}$, we can apply Lemma \ref{new-lower-bound}. This gives us \eqref{claimed-lower-bound-precise} with $u_m$ depending only on $Y_0$, $\lambda_0$, and the bounds $a_m$, etc. In the case where the second condition holds in \eqref{hyp-both}, we have in particular that
$$
\lim_{T \downarrow 0} \left\{|\ln T| \, \essinf_y R(y,T) \right\} =+\infty.
$$
Applying then Lemma \ref{lower-bound}, we get also \eqref{claimed-lower-bound-precise} with now $u_m$ depending only on $Y_0$, $R_0$, and $a_m$, etc. This completes the proof of \eqref{claimed-lower-bound-precise}. 

Now we can use the assumptions \ref{hyp-arrh-1}, \ref{hyp-arrh-3} and \ref{hyp-arrh-4} to show that the combustion rate $y \mapsto R(y,u(v(y),y))$ in Problem \eqref{T-eqn}--\eqref{front-eqn} is positively bounded from below and above (by some constants having the desired dependences). The end of the proof is then an easy application of Theorem \ref{front-well-posed}\eqref{front-esti} followed by Theorem \ref{T-well-posed}\eqref{Linfty-esti}.
\end{proof}

Following \cite{Nit11}, the rest of the proof of Theorem \ref{Holder-regularity} consists in identifying an elliptic PDE for the extension of $u$ in order to apply the (local) De Giorgi-Nash-Moser's theorem. 

The following lemma identifies the equation satisfied by the extension of $u$ to the whole space.

\begin{lemma}\label{identification}
Let $(c,v,u)$ be as in the preceding lemma.
Let us define, for almost every $(x,y) \in \R^2$,
\begin{equation*}
\begin{split}
\tilde{b}(x,y) & :=
 b(y) \times
\begin{cases}
c, & \quad x<v(y),\\
-c, & \quad x>v(y),
\end{cases}\\
A(x,y) & :=a(y)  \times
\begin{cases}
\left(\begin{array}{cc}
1 & 0 \\
0 & 1
\end{array} \right), & \quad x<v(y),\\[3ex]
 \left( 
\begin{array}{cc}
1+4 \, v_y^2 & 2 \, v_y \\
2 \, v_y & 1
\end{array} 
\right), & \quad x>v(y),\\
\end{cases}\\
\tilde{g}(x,y) & := g(y) \times
\begin{cases}
-c, & \quad x<v(y),\\
c , & \quad x>v(y).
\end{cases}
\end{split}
\end{equation*}
Then $u \in H_{\loc}^1(\R^2)$ satisfies:
\begin{equation*}
\int_{\R^2} \left(\tilde{b} \, u_x \, \varphi+ \langle A \, \nabla u , \nabla \varphi \rangle \right)=-\int_{\R^2} \tilde{g} \, \varphi_x \quad \forall \varphi \in C^1_c(\R^2),
\end{equation*}
where $\langle \cdot,\cdot \rangle$ is the inner product.
\end{lemma}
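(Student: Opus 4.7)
The plan is to split the integral over $\R^2$ into its parts over $\Omega$ and $\Omega^c:=\R^2\setminus\overline\Omega$, handle each piece via the variational formulation of \eqref{T-eqn}, and then match the total to the right-hand side through a direct integration by parts in the $x$-variable. The $H^1_\loc$ regularity of (the reflected extension of) $u$ is already guaranteed by \eqref{ext-esti}, so each integral below is well-defined.

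First I would verify, by a direct (but tedious) computation, that the matrix $A|_{\Omega^c}$ has been engineered precisely so that for any $\psi\in C^1(\R^2)$ and almost every $(x,y)\in \Omega^c$,
\begin{equation*}
\tilde b(x,y)\,u_x(x,y)\,\psi(x,y)+\langle A(x,y)\,\nabla u(x,y),\nabla\psi(x,y)\rangle
=c\,b(y)\,u^\circ_{\tilde x}\,\tilde\psi+a(y)\,\langle\nabla u^\circ,\nabla\tilde\psi\rangle,
\end{equation*}
where $u^\circ$ and the derivatives in the right-hand side are evaluated at $(\tilde x,\tilde y)=\Phi(x,y):=(2v(y)-x,y)$, and where $\tilde\psi(\tilde x,\tilde y):=\psi(\Phi(\tilde x,\tilde y))$. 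The algebraic identity behind this is $A|_{\Omega^c}=a\,B^{\transpose}B$ with $B=\left(\begin{smallmatrix}-1&0\\2v_y&1\end{smallmatrix}\right)$, combined with the fact that the Jacobian of the involution $\Phi$ has absolute value one.

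Next, for $\varphi\in C^1_c(\R^2)$, I would write
$\int_{\R^2}\bigl(\tilde b\,u_x\,\varphi+\langle A\nabla u,\nabla\varphi\rangle\bigr)=I_1+I_2$
with $I_1$ (resp.\ $I_2$) the integral over $\Omega$ (resp.\ $\Omega^c$). For $I_1$ the coefficients reduce to $cb$ and $a\,\mathrm{Id}$, and Remark~\ref{var-equiv-dis} (with $w=\varphi$) gives immediately
\begin{equation*}
I_1=\int_{\Gamma}\frac{c\,g}{\sqrt{1+v_y^2}}\,\varphi.
\end{equation*}
For $I_2$, I would apply the change of variables $\Phi$ to turn the $\Omega^c$ integral into one over $\Omega$: using the identity from the previous step and the fact that $\Phi$ is an involution with unit Jacobian, one obtains
$I_2=\int_\Omega\bigl(c\,b\,u_x\,\tilde\varphi+a\,\langle\nabla u,\nabla\tilde\varphi\rangle\bigr)$
with $\tilde\varphi(x,y)=\varphi(2v(y)-x,y)$. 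Since $\tilde\varphi\in W^{1,\infty}(\R^2)$ is compactly supported in $x$, a standard density argument (approximating $\tilde\varphi$ by test functions in $C^1_c(\R^2)$) allows me to use it in \eqref{dis-eqn}, yielding
\begin{equation*}
I_2=\int_\Gamma\frac{c\,g}{\sqrt{1+v_y^2}}\,\tilde\varphi=\int_\Gamma\frac{c\,g}{\sqrt{1+v_y^2}}\,\varphi,
\end{equation*}
because $\tilde\varphi$ and $\varphi$ coincide on $\Gamma=\{x=v(y)\}$.

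Finally, I would compute the right-hand side by a direct integration by parts in $x$: splitting $\int_{\R^2}\tilde g\,\varphi_x=\int_\Omega(-c\,g)\,\varphi_x+\int_{\Omega^c}(c\,g)\,\varphi_x$ and using Fubini in the $x$-variable (both half-lines produce a boundary contribution at $x=v(y)$ only, since $\varphi$ is compactly supported) leads to
$-\int_{\R^2}\tilde g\,\varphi_x=2\int_\Gamma\frac{c\,g}{\sqrt{1+v_y^2}}\,\varphi=I_1+I_2,$
which is the announced identity.

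The only delicate point is the algebraic verification in the first step: one must check that the specific choice of $A$ on $\Omega^c$ is exactly the push-forward of $a\,\mathrm{Id}$ by $\Phi^{-1}$, so that the bilinear form on $\Omega^c$ becomes the standard one on $\Omega$ after the change of variable. All other steps are routine once this computation is in place; the factor of two (needed to match $I_1+I_2$ with the boundary contribution of $-\int\tilde g\varphi_x$) is automatic, since $\tilde g$ is odd with respect to the reflection across $\Gamma$ whereas $\varphi$ on $\Gamma$ is unchanged by reflection.
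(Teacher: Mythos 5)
Your proposal is correct and follows essentially the same route as the paper: the reflection $\Phi(x,y)=(2v(y)-x,y)$ with unit Jacobian, the identity $A|_{\{x>v\}}=a\,J J^{\transpose}$, the use of the distributional formulation \eqref{dis-eqn} with the reflected test function, and the rewriting of the boundary term as an integral of $\varphi_x$; you merely read the change of variables in the pull-back direction ($\Omega^c\to\Omega$) where the paper pushes forward ($\Omega\to\Omega^c$). Note only that since $v\in W^{2,\infty}(\R)$ the composition $\tilde\varphi=\varphi\circ\Phi$ is already in $C^1_c(\R^2)$, so the density step you invoke is not even needed.
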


\begin{remark}
In other words, the extension of $u$ is a variationnal solution of 
\begin{equation}\label{ext-eqn}
 \tilde{b} \, u_x-\Div (A \, \nabla u)= \tilde{g}_x \quad \mbox{in} \quad \R^2,
\end{equation}
with measurable and bounded coefficients $ \tilde{b}$, $A$ and $\tilde{g}$.
\end{remark}

\begin{proof}
Let $\phi:\R^2 \to \R^2$ be defined by
$$
\phi(x,y):=(2 \, v(y)-x,y).
$$
Then $\phi$ is a $C^1$ bijection, since $v$ is $W^{2,\infty}$, and $\phi^{-1}=\phi$. Moreover
\begin{equation*}
u \circ \phi=u
\end{equation*}
($u$ being extended to $\R^2$ by \eqref{extension}). Taking the gradient, it follows that
\begin{equation*}
\left(\textrm{Jac} \, \phi\right)^{\transpose} \nabla u \circ \phi=\nabla u, 
\end{equation*}
where 
$$
\textrm{Jac} \, \phi:=\left( 
\begin{array}{cc}
-1 & 2 \, v_y \\
0 & 1
\end{array}
\right)
$$ 
is the Jacobian matrix of $\phi$ and $\left(\textrm{Jac} \, \phi\right)^{\transpose}$ its transpose. 
But $\left(\textrm{Jac} \, \phi\right)^{-1}=\textrm{Jac} \, \phi$, since $\phi^{-1}=\phi$, and thus 
\begin{equation}\label{formula-gradient}
\nabla u \circ \phi =\left(\textrm{Jac} \, \phi\right)^{\transpose} \nabla u \quad \mbox{so that} \quad u_x \circ \phi=-u_x.
\end{equation}
Let us introduce a new test function
\begin{equation*}
\psi:=\varphi \circ \phi.
\end{equation*}
By the same computations as above, 
\begin{equation}\label{formula-gradient-bis}
\nabla \psi \circ \phi=\left(\textrm{Jac} \, \phi\right)^{\transpose} \nabla \varphi. 
\end{equation}
But $\psi \in C^1_c(\R^2)$ can be used as a test function in \eqref{dis-eqn}, that is
\begin{equation*}
\int_{x<v(y)} \left(c \, b \, u_x \, \psi+a \, \nabla u \, \nabla \psi \right) \underbrace{(x,y)}_{=\phi(\phi^{-1}(x,y))} \dif x \, \dif y=\int_{\R} c \, g(y) \, \psi(v(y),y) \, \dif y. 
\end{equation*}
Let us change the variable by $\phi^{-1}(x,y) \mapsto (x,y)$ 
in the first integral. We have the new element of integration $\left|\textrm{Jac} \, \phi\right| \dif x \, \dif y=\dif x \, \dif y$, since $\left|\textrm{Jac} \, \phi\right|=1$, and the new domain 
$
\phi^{-1} \left(\{x<v(y) \} \right)=\{x>v(y)\}.
$
Hence, 
\begin{equation*}
\int_{x>v(y)} \left(c \, b \, u_x+a \, \nabla u \, \nabla \psi \right)\left(\phi(x,y)\right)  \dif x \, \dif y=\int_\R c \, g(y) \, \varphi(v(y),y) \, \dif y,
\end{equation*}
where we have used in addition that $\psi=\varphi$ at the front $\{x=v(y)\}$ to rewrite the second integral. By \eqref{formula-gradient} and \eqref{formula-gradient-bis}, we conclude that
\begin{multline*}
\int_{x>v(y)} \left(- c \, b \, u_x+\langle a \left(\textrm{Jac} \, \phi\right)^{\transpose} \nabla u, \left(\textrm{Jac} \, \phi\right)^{\transpose} \nabla \varphi \rangle \right) (x,y) \, \dif x \, \dif y\\
=\int_\R c \, g(y) \, \varphi(v(y),y) \, \dif y=-\int_{x>v(y)} c \, g(y) \, \varphi_x(x,y) \, \dif x \, \dif y.
\end{multline*}
Using that $\left(\textrm{Jac} \, \phi \right) \left(\textrm{Jac} \, \phi\right)^{\transpose}=\left( 
\begin{array}{cc}
1+4 \, v_y^2 & 2 \, v_y \\
2 \, v_y & 1
\end{array} 
\right)$, we obtain:
$$
\int_{x>v(y)} \left( \tilde{b} \, u_x \, \varphi+ \langle A \, \nabla u , \nabla \varphi \rangle \right)=-\int_{x>v(y)} \tilde{g} \, \varphi_x,
$$
where $\tilde{b}$, $A$ and $\tilde{g}$ are defined as in the lemma. To complete the proof, it suffices to choose  $\varphi$ in \eqref{dis-eqn}, thus getting the remaining equality for $x<v(y)$, and then sum up the result with the equality above.
\end{proof}

With the preceding lemma in hands, the H\trema{o}lder estimate stated in Theorem \ref{Holder-regularity} will be a consequence of \cite[Theorem 8.24]{GiTr01}. We first need to check that the coefficient $\tilde{b}$, $A$ and $\tilde{g}$ satisfy the assumption of this theorem, see \cite[Pages 177--178]{GiTr01}. 

\begin{lemma}\label{checking-assumptions} Let $(c,v,u)$, $\tilde{b}$ and $A$ be as in the preceding lemmas. Then for almost every $(x,y) \in \R^2$ and all $\xi \in \R^2$,
\begin{enumerate}[label={{\rm (\roman*)}}]
\item $\langle A(x,y) \, \xi,\xi \rangle \geq \lambda \, |\xi|^2$,\label{ellipticity}
\item $|A(x,y)| \leq \Lambda$,
\item $\lambda^{-1} \, |\tilde{b}(x,y)| \leq \nu$,
\end{enumerate}
for some positive constants $\lambda$, $\Lambda$ and $\nu$ depending only on $Y_0$, $\lambda_0$, $R_0$, and the bounds $a_{m}$, $b_{m}$, $g_{m}$, $a_{M}$, $b_{M}$, $g_{M}$ and $R_{M}$.
\end{lemma}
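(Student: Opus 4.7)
The plan is to verify each of the three bounds separately on the two half-planes $\{x<v(y)\}$ and $\{x>v(y)\}$ on which the coefficients $A$ and $\tilde b$ are defined piecewise, and to absorb the $y$-dependence into uniform constants thanks to the a priori bounds provided by Lemma \ref{estimates-tw}.

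First, note that on the fresh region $\{x<v(y)\}$ the matrix reduces to $A=a(y)\,\mathrm{Id}$, so $\langle A\xi,\xi\rangle=a(y)|\xi|^2\geq a_m|\xi|^2$ and $|A|\leq a_M$; no work is needed here. On $\{x>v(y)\}$, I would factor $A=a(y)\,M$ with
$$
M:=\begin{pmatrix}1+4v_y^2 & 2v_y\\ 2v_y & 1\end{pmatrix},
$$
and compute $\tr M=2+4v_y^2$ and $\det M=(1+4v_y^2)\cdot 1-(2v_y)^2=1$. Hence $M$ is symmetric and positive definite with eigenvalues $\mu_\pm=(1+2v_y^2)\pm 2|v_y|\sqrt{1+v_y^2}>0$ satisfying $\mu_+\mu_-=1$.

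Using now Lemma \ref{estimates-tw} to get $\|v_y\|_\infty\leq C_0$ for some constant $C_0$ depending only on $Y_0$, $\lambda_0$, $R_0$ and the bounds $a_m,\dots,R_M$, I obtain the uniform inequalities
$$
\mu_-\geq \frac{1}{1+2C_0^2+2C_0\sqrt{1+C_0^2}}=:\kappa>0,\qquad \mu_+\leq 1+2C_0^2+2C_0\sqrt{1+C_0^2}=\kappa^{-1}.
$$
This gives $\langle A\xi,\xi\rangle\geq a_m\,\kappa\,|\xi|^2$ and $|A|\leq a_M/\kappa$ on $\{x>v(y)\}$. Setting $\lambda:=a_m\,\kappa$ and $\Lambda:=a_M/\kappa$ (taking the worse bound on each side), the items \ref{ellipticity} and (ii) follow with constants having the right dependences.

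For the item (iii), the definition of $\tilde b$ gives $|\tilde b(x,y)|\leq b_M\,c$, and Lemma \ref{estimates-tw} provides $c\leq c_M$ with $c_M$ depending only on the stated parameters. Therefore $\lambda^{-1}|\tilde b|\leq b_M\,c_M/\lambda=:\nu$ which completes the verification. The whole argument is a routine algebraic computation on a $2\times 2$ matrix; the only non-trivial ingredient is the a priori Lipschitz bound on $v$ from Lemma \ref{estimates-tw}, which is precisely why the resulting constants inherit their dependences on $Y_0$, $\lambda_0$, $R_0$ and the structural bounds, and notably not on $\mu$.
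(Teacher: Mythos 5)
Your proposal is correct and follows essentially the same route as the paper: on $\{x>v(y)\}$ you diagonalize the symmetric $2\times 2$ matrix (noting its determinant is $1$, so the small eigenvalue is the reciprocal of the large one) and then convert the a priori bound $\|v_y\|_\infty\leq C$ from Lemma \ref{estimates-tw} into a uniform eigenvalue bound, exactly as in the paper's proof, while items (ii) and (iii) are the same routine estimates using $c\leq c_M$. Your version is merely a bit more explicit in writing out the constants $\kappa$, $\lambda$, $\Lambda$, $\nu$.
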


\begin{proof}
All the items are easy to prove excepted may be \ref{ellipticity}. To show this property, it suffices to find a lower bound $\lambda>0$ of the eigenvalues of $A=A(x,y)$. If $x<v(y)$, we have $A=a(y) \left( 
\begin{array}{cc}
1 & 0 \\
0 & 1
\end{array} 
\right)$ and any constant $\lambda \leq a_m$ will work. Let us thus focus on the other case where
$
A= \left( 
\begin{array}{cc}
1+4 \, v_y^2 & 2 \, v_y \\
2 \, v_y & 1
\end{array} 
\right)
$. Simple computations show that:
\begin{itemize}
\item[$\bullet$] If $v_y=0$, then $A$ has a double eigenvalue
$$
\lambda_0=1+2 \, v_y^2,
$$
\item[$\bullet$] and if $v_y \neq 0$, then it has two single eigenvalues
$$
\lambda_1=1+2 \, v_y^2+2\sqrt{v_y^2 \, (1+ v_y^2)} \quad \mbox{and} \quad \lambda_2=\lambda_1^{-1}.
$$
\end{itemize}  
The existence of $\lambda$ then follows from the bound on $ v_y$ in Lemma \ref{estimates-tw}. 
\end{proof}

We can now prove the theorem.

\begin{proof}[Proof of Theorem \ref{Holder-regularity}]
Let $(x_0,y_0) \in \R^2$ be arbitrary and consider the balls
$$
\Omega:=B\left((x_0,y_0),1\right) \quad \mbox{and} \quad \Omega':=B\left((x_0,y_0),1/2\right).
$$
Let us apply \cite[Theorem 8.24]{GiTr01} to Equation \eqref{ext-eqn} satisfied by $u \in H^1(\Omega)$ as stated in Lemma \ref{identification}; note that we are using the notation of \cite{GiTr01} for simplicity, so that $\Omega$ is not just the fresh region here. We choose $q=+\infty$, since $G$ is bounded, and note that the distance $d'=1/2$ between $\Omega'$ and the boundary of $\Omega$ does not depend on $(x_0,y_0)$. The estimate stated in \cite[Theorem 8.24]{GiTr01} then reads:
$$
|u(x,y)-u(\tilde{x},\tilde{y})| \leq C \times \left(\|u\|_{L^2(\Omega)}+\lambda^{-1} \, \|G\|_{\infty} \right) \times \left(|x-\tilde{x}|^\alpha+|y-\tilde{y}|^\alpha \right)
$$
for all $x,\tilde{x},y,\tilde{y} \in \overline{B ((x_0,y_0),1/2)}$ and
for some constants $C=C \left(\Lambda/\lambda,\nu\right) \geq 0$ and $\alpha=\alpha\left(\Lambda/\lambda,\nu\right)>0$, where $\lambda$, $\Lambda$ and $\nu$ come from Lemma \ref{checking-assumptions}. Let us recall that these three constants have the desired dependences (as stated in Theorem~\ref{Holder-regularity}). Moreover, by the definition of $G$ in Lemma \ref{identification} and Lemma \ref{estimates-tw}, it is clear that $\|u\|_{L^2(\Omega)}+\lambda^{-1} \, \|G\|_{\infty} \leq \tilde{C}$ for another constant having the same dependences. This completes the proof since all the constants above have the desired dependences and in particular do not depend on the arbitrary $(x_0,y_0)$.
\end{proof}

\section{Strong convergence of the gradients}\label{app-strong-conv}

In this appendix we establish the strong convergence of $\nabla u^\varepsilon$ during the homogenization of Problem \eqref{eps-T-eqn-bis}--\eqref{eps-front-eqn-bis}. The ideas of the proof are standard, see \cite{LuNgWa02}. Some details are given for completeness. This appendix is independent of the rest of the paper.  

\begin{proposition}
\label{Theorem-strongCVofU} 
Let the assumptions of Theorem \ref{Theorem-homogenization} or Theorem \ref{Theorem-homogenization-lambda} hold. We then have:
\begin{equation*}
\begin{cases}
\lim_{\varepsilon \downarrow 0} u_x^{\varepsilon }=u_x^{0} & \quad \mbox{(strongly) in} \quad L_{\loc}^2(\R_y;L^2(\R_x)),\\
\lim_{\varepsilon \downarrow 0} u_y^{\varepsilon }=0 & \quad \mbox{(strongly) in} \quad L_{\loc}^2(\R_y;L^2(\R_x)).\\
\end{cases}
\end{equation*}
\end{proposition}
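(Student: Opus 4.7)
The strategy combines the weak convergence $\nabla u^\varepsilon \rightharpoonup \nabla u^0 = (u_x^0, 0)$ in $L^2_\loc(\R^2)$---which follows from the uniform $H^1_\per$-bound $|u^\varepsilon|_{H^1_\per(\R^2)} \leq C$ (Theorem~\ref{T-well-posed}\eqref{H1-esti}, inherited by Lemma~\ref{homogenization-compacity}) together with the fact that $u^0 = u^0(x)$---with a convergence-of-norms identity. In a Hilbert space, weak convergence plus norm convergence implies strong convergence, so it suffices to establish
\begin{equation*}
\int a\!\left(\tfrac{y}{\varepsilon}\right) |\nabla u^\varepsilon|^2\, \varphi \xrightarrow[\varepsilon\downarrow 0]{} \int \overline{a}\, |\nabla u^0|^2\, \varphi
\end{equation*}
for every nonnegative $\varphi \in C_c^\infty(\R^2)$. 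Once this holds, expanding $\int a^\varepsilon |\nabla(u^\varepsilon - u^0)|^2 \varphi$ into three terms and invoking the uniform ellipticity $a^\varepsilon \geq a_m > 0$ yields strong convergence of $\nabla u^\varepsilon$ in the weighted $L^2(\varphi\,\dif x\,\dif y)$, whence the desired $L^2_\loc(\R_y; L^2(\R_x))$-convergence after varying $\varphi$ and using the exponential decay of $u^\varepsilon$ (Theorem~\ref{T-well-posed}\eqref{Linfty-esti}) to absorb the tails as $|x| \to \infty$.

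The plan for the energy identity is to work on $\{x < -\delta\}$ for a fixed small $\delta > 0$: since $\|v^\varepsilon\|_\infty \to 0$, the support of $\varphi$ lies entirely inside $\Omega^\varepsilon$ for all $\varepsilon$ small enough, and there the PDE $\mathrm{div}(a^\varepsilon \nabla u^\varepsilon) = c^\varepsilon b^\varepsilon u_x^\varepsilon$ holds distributionally. Setting $F^\varepsilon := a^\varepsilon \nabla u^\varepsilon$ and $G^\varepsilon := \nabla u^\varepsilon$, one has $\mathrm{div}\,F^\varepsilon$ bounded in $L^2_\loc$ while $\mathrm{curl}\,G^\varepsilon = 0$, so the Murat--Tartar div--curl lemma yields $a^\varepsilon |\nabla u^\varepsilon|^2 \rightharpoonup F^0 \cdot \nabla u^0$ in $\mathcal{D}'(\{x<-\delta\})$, where $F^0$ is the weak $L^2_\loc$-limit of $a^\varepsilon \nabla u^\varepsilon$. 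Identifying $F^0$ by Tartar oscillating test functions built from the $1$-periodic cell correctors in the $y$-direction, the homogenized matrix is $A^\star = \mathrm{diag}\bigl(\overline{a},(\overline{1/a})^{-1}\bigr)$; applied to $\nabla u^0 = (u_x^0, 0)$ this gives $F^0 = (\overline{a}\, u_x^0, 0)$ and $F^0 \cdot \nabla u^0 = \overline{a}\,|\nabla u^0|^2$. The two remaining pieces of the expansion pass to the limit softly: $\int a^\varepsilon |\nabla u^0|^2 \varphi \to \overline{a}\int|\nabla u^0|^2\varphi$ by $a^\varepsilon \rightharpoonup \overline{a}$ weak-$\star$ against $|\nabla u^0|^2\varphi \in L^1$, and $\int a^\varepsilon \nabla u^\varepsilon \cdot \nabla u^0\, \varphi \to \overline{a}\int |\nabla u^0|^2\varphi$ by the now-identified flux convergence $a^\varepsilon u_x^\varepsilon \rightharpoonup \overline{a}\, u_x^0$ tested against the smooth compactly supported $u_x^0\,\varphi$. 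The three contributions cancel, giving $\int a^\varepsilon|\nabla(u^\varepsilon - u^0)|^2 \varphi \to 0$ on $\supp \varphi$.

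The hard part is precisely the identification of the two-scale flux $F^0$: both $a^\varepsilon$ and $\nabla u^\varepsilon$ converge only weakly (respectively weak-$\star$), so no soft product argument applies and one genuinely needs compensated compactness (or equivalently two-scale convergence) with explicit correctors for the $y$-cell problem---the $x$-direction is trivial because the coefficients are independent of $x$. A secondary technical point is transferring the convergence from $\{x < -\delta\}$ to a neighborhood of the front and its reflection: one uses the explicit formula \eqref{eps-extension} to relate the gradient on $\{x > v^\varepsilon\}$ to that on $\{x < v^\varepsilon\}$ via a diffeomorphism whose Jacobian depends on $v_y^\varepsilon$ (which tends to $0$ uniformly), and sweeps the thin strip $\{|x|\le\delta\}$ by letting $\delta \downarrow 0$ and invoking standard higher integrability of gradients of solutions to uniformly elliptic equations with bounded measurable coefficients to bound the strip contribution by $O(\delta^{1-2/p})$ uniformly in $\varepsilon$.
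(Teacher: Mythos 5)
Your proposal is correct in substance, but it takes a genuinely different and heavier route than the paper. The paper never localizes away from the front and never invokes compensated compactness: it takes $w=u^\varepsilon$ in the variational formulation over one period, which converts the full weighted energy $\int a\left(\tfrac{y}{\varepsilon}\right)|\nabla u^\varepsilon|^2$ into a convection term (which integrates in $x$ to a trace term) plus the boundary term, both passing to the limit using only the uniform convergence of $u^\varepsilon$, $v^\varepsilon$, $c^\varepsilon$ and the weak-$\star$ convergence of the periodic coefficients; the cross term $\int a^\varepsilon u^\varepsilon_x u^0_x$ is handled by an integration by parts in $x$ (legitimate because $a$ is independent of $x$ and $\nabla u^0=(u^0_x,0)$), which replaces the weakly converging factor by the uniformly converging $u^\varepsilon$; the resulting limit of $\int a^\varepsilon|\nabla u^\varepsilon-\nabla u^0|^2$ is then recognized as the weak formulation \eqref{homogenized-T-eqn} tested with $u^0$ itself, hence zero. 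Your route---div--curl on $\{x<-\delta\}$ plus Tartar oscillating test functions to identify the flux limit, then Meyers' higher integrability to sweep the strip $\{|x|\le\delta\}$ and the reflection diffeomorphism for the region $\{x>v^\varepsilon\}$---does work: $\Div(a^\varepsilon\nabla u^\varepsilon)=c^\varepsilon b^\varepsilon u^\varepsilon_x$ is bounded in $L^2_\loc$, hence compact in $H^{-1}_\loc$, and the laminate formula $A^\star=\mathrm{diag}\bigl(\overline a,(\overline{1/a})^{-1}\bigr)$ is the right one, giving $F^0\cdot\nabla u^0=\overline a\,(u^0_x)^2$. But it buys generality you do not need: since $\nabla u^0$ has no $y$-component, only the first component of the flux limit matters, and $a^\varepsilon u^\varepsilon_x\rightharpoonup\overline a\,u^0_x$ already follows from one integration by parts in $x$ (strong convergence of $u^\varepsilon$ against weak-$\star$ convergence of $a^\varepsilon$), i.e.\ from exactly the elementary trick the paper exploits globally; the $y$-cell corrector and the harmonic mean never enter.

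Two details you should tighten. First, the ``exponential decay of $u^\varepsilon$'' in \eqref{Linfty-esti} bounds $u^\varepsilon$, not $\nabla u^\varepsilon$: to absorb the tails $x\to\mp\infty$ uniformly in $\varepsilon$ you also need a Caccioppoli estimate (cut-off in $x$, using the equation and uniform ellipticity) converting the pointwise decay of $u^\varepsilon$ into decay of the local Dirichlet energy; the paper avoids this because its energy identity is global in $x$ from the start. Second, the Meyers estimate on the strip must hold across the moving front with constants uniform in $\varepsilon$; this is true, but the clean way to get it is to apply the interior estimate to the reflected extension, which solves the uniformly elliptic transmission equation \eqref{ext-eqn} of Lemma \ref{identification} on all of $\R^2$ with ellipticity controlled by $\|v^\varepsilon_y\|_\infty$ (uniformly bounded, indeed tending to $0$); an interior estimate in the fresh region alone does not cover the strip.
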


\begin{proof}[Proof of Proposition \ref{Theorem-strongCVofU}]
We use the notation $\nabla u^0:=(u^0_x,0)$. By $\varepsilon$-periodicity of $u^\varepsilon$, it suffices to prove that
$$
\lim_{\varepsilon \downarrow 0}\int_{0}^{l_{\varepsilon }}\int_{-\infty
}^{v^{\varepsilon }(y)} \left|\nabla u^\varepsilon-\nabla u^{0}\right|^{2} \, \dif x \, \dif y=0
$$
(with $l_{\varepsilon }=\llcorner \frac{1}{\varepsilon }\lrcorner  \, \varepsilon $, $\llcorner 
\cdot \lrcorner$ lower integer part).
In fact, as $a^{\varepsilon }(y)\geq a_{m}>0,$ it suffices to prove that the
quantity 
\begin{equation*}
I:=\int_{0}^{l_{\varepsilon }}\int_{-\infty }^{v^{\varepsilon }(y)}a\left(\frac{y}{\varepsilon}\right) \left| \nabla u^{\varepsilon }-\nabla u^{0}\right|
^{2} \, \dif x \, \dif y
\end{equation*}
goes to zero. A simple expansion of $I$ gives 
$$
I=I_{1}+I_{2}+I_{3},
$$
where
\begin{eqnarray*}
I_{1} & := & \iint a\left(\frac{y}{\varepsilon }\right) \left| \nabla u^{\varepsilon
}\right|^{2},\\
I_{2} & := & \iint a\left(\frac{y}{\varepsilon }\right) \left| \nabla u^{0}\right|^{2},\\
I_{3} & := & -2\iint a\left(\frac{y}{\varepsilon }\right) \nabla u^{\varepsilon} \, \nabla u^{0}
\end{eqnarray*}
(with the same domain of integration as $I$).
We will pass in the limit $\varepsilon \downarrow 0$ term by term. In what follows, we will only need the convergences stated in Theorems \ref{Theorem-homogenization} and \ref{Theorem-homogenization-lambda}, that is to say: $c^{\varepsilon } \to c^{0},$ $v^{\varepsilon } \to 0$ uniformly on $\R$, and $u^\varepsilon \to u^0$ uniformly on $\R^2$. Recall that we have moreover $l_\varepsilon \to 1$ and $a \left(\frac{\cdot}{\varepsilon }\right) \to \overline{a}$ in $L^\infty(\R)$ weak-$\star$ (the same kind of result
holding also for the other periodic data $b$ and $g$). By taking $w=u^{\varepsilon }$ in Definition \ref{defn-eps-existence},
\begin{equation*}
I_{1} =\underbrace{-\int_{0}^{l_{\varepsilon }}\int_{-\infty }^{v^{\varepsilon
}(y)} \, c^{\varepsilon} \, b\left(\frac{y}{\varepsilon }\right) \left(\frac{(u^{\varepsilon })^{2}}{2}\right)_{x} \dif x \, \dif y}_{=\int_{0}^{l_{\varepsilon }} c^{\varepsilon} \, b\left(\frac{y}{\varepsilon }\right) \left(\frac{(u^{\varepsilon})^{2}}{2}\right) \left(v^{\varepsilon}(y),y\right) \, \dif y \mbox{ after integration}}+\int_{0}^{l_{\varepsilon }} c^{\varepsilon} \, g\left(\frac{y}{\varepsilon}\right) u^{\varepsilon }\left(v^{\varepsilon }(y),y\right)  \dif y
\end{equation*}%
so that in the limit $\varepsilon \downarrow 0,$ we have
\begin{equation*}
\lim_{\varepsilon \downarrow 0} I_{1}=-c^{0}\,\overline{b}\,\frac{\left(u^{0}(0)\right)^{2}}{2}+c^{0}\,\overline{g}\,u^{0}(0).
\end{equation*}%
and which can be rewritten as 
\begin{equation*}
\lim_{\varepsilon \rightarrow 0}I_{1}=-\int_{-\infty }^{0}c^{0}\,\overline{b}
\,u_{x}^{0} \, u^{0} \, \dif x+c^{0}\,\overline{g}\,u^{0}(0).
\end{equation*}%
As concerning $I_{2,}$ 
\begin{equation*}
\lim_{\varepsilon \downarrow 0}I_{2}=\lim_{\varepsilon \downarrow
0}\int_{0}^{l_{\varepsilon }} \int_{-\infty }^{v^{\varepsilon }(y)} a\left(\frac{y}{\varepsilon }\right) \left(u_{x}^{0}\right)^{2} \dif x \, \dif y=\int_{-\infty }^{0} \overline{a} \left(u_{x}^{0}\right)^{2} \dif x.
\end{equation*}%
Now we proceed with 
\begin{equation*}
I_{3}=-2\int_{0}^{l_{\varepsilon }}\int_{-\infty }^{v^{\varepsilon
}(y)} a\left(\frac{y}{\varepsilon }\right) u_{x}^{\varepsilon } \, u_{x}^{0} \, \dif x \, \dif y
\end{equation*}
and where the passage to the limit is not immediate because we have two weak
convergences. We thus use the regularity of $u^{0}$ and integrate by parts to obtain
\begin{equation*}
I_{3}=J_{1}+J_{2},
\end{equation*}%
with 
\begin{eqnarray*}
J_{1} & := & 2\int_{0}^{l_{\varepsilon }}\int_{-\infty }^{v^{\varepsilon }(y)} a\left(\frac{y}{\varepsilon }\right) u^{\varepsilon} \, u_{xx}^{0} \, \dif x \, \dif y,\\
J_{2} & := & -2 \int_{0}^{l_{\varepsilon }} a\left(\frac{y}{\varepsilon }\right)
u^{\varepsilon}\left(v^{\varepsilon }(y),y\right) u_{x}^{0}\left(v^{\varepsilon }(y)\right) \dif y.
\end{eqnarray*}
In the limit $\varepsilon \rightarrow 0,$ we then have%
\begin{eqnarray*}
\lim_{\varepsilon \downarrow 0}J_{1} =2\int_{-\infty }^{0}\overline{a} \,
u^{0} \, u_{xx}^{0} \, \dif x =-2\int_{-\infty }^{0}\overline{a} \left(u_{x}^{0}\right)^{2} \dif x+2\, \overline{a} \, u^{0}(0) \, u_{x}^{0}(0),
\end{eqnarray*}%
and $\lim_{\varepsilon \downarrow 0}J_{2}=-2 \, \overline{a} \, u^{0}(0) \, u_{x}^{0}(0).$
Finally by adding all the limits, we obtain%
\begin{equation*}
\lim_{\varepsilon \downarrow 0} I=-\int_{-\infty }^{0} \left\{c^{0}\,\overline{b} \, u_{x}^{0} \, u^{0}+\overline{a} \left(u_{x}^{0}\right)^{2}\right\} \dif x+c^{0}\,\overline{g}\,u^{0}(0). 
\end{equation*}
On the right-hand side, we recognize the weak formulation of Problem \eqref{homogenized-T-eqn}, with $u^0$ itself as a test function. Standard computations, consisting in multiplying the equation in \eqref{homogenized-T-eqn}, integrating by parts, and using the boundary condition, then show that this right-hand side equals zero and the proof is complete.
\end{proof}

\section{Main notations}\label{app-notations}

(Essentially by order of the first occurence in the paper.)

\begin{longtable}{p{110pt} p{230pt}}
$\R^+$ & set of positive reals (excluding $0$)\\
\smallskip
$Y$ & period used for the existence of travelling waves\\
\smallskip
$\Omega$ & fresh region $\{x<v(y)\}$ (for a given $Y$-periodic $v$)\\
\smallskip
$\Gamma$ & position of front $\{x=v(y)\}$\\
\smallskip
$\Omega_\diese$ & $\Omega \cap \{0<y<Y\}$\\
\smallskip
$\Gamma_\diese$ & $\Gamma \cap \{0<y<Y\}$\\
\smallskip
$L^p_\diese$, $H^1_\diese$, etc. & spaces of functions $Y$-periodic in $y$\\ 
\smallskip
$\overline{f}$ & mean value $\frac{1}{Y} \int_0^Y f$ of a $Y$-periodic $f=f(y)$\\
\smallskip
$\varepsilon$ & period used for the homogenization process\\
\smallskip
$\Omega^\varepsilon$ & fresh region $\{x<v^\varepsilon(y)\}$\\
\smallskip
$\Gamma^\varepsilon$ & position of the front $\{x=v^\varepsilon(y)\}$\\
\smallskip
$\Omega_\per^\varepsilon$ & $\Omega^\varepsilon \cap \{0<y<\varepsilon\}$\\
\smallskip
$\Gamma_\per^\varepsilon$ & $\Gamma^\varepsilon \cap \{0<y<\varepsilon\}$\\	
\smallskip
$L_\per^p$, $H_\per^1$, etc. & spaces of functions $\varepsilon$-periodic in $y$\\ 
\smallskip
$\overline{f}$ & mean value of an $\varepsilon$-periodic $f=f(y)$\\
$\llcorner \cdot \lrcorner$, $\ulcorner \cdot \urcorner$ & lower and upper integer parts\\
\smallskip
$w^\varepsilon(z)=\frac{v^\varepsilon(\varepsilon \, z)}{\varepsilon}$ & rescaled front\\
\smallskip
$c(\lambda)$ & shorthand notation of $c^0(\lambda)$ in Section \ref{monotonicity}\\
\smallskip
$h(z)=w_z(z)=h(z,\lambda)$ & solution of Equation \eqref{Eqn1-Cmonotone}\\ 
\smallskip
$\mathscr{R}(z)=R\left(z,\frac{\overline{g}}{\overline{b}} \right)$ & combustion rate of Equation \eqref{Eqn1-Cmonotone}\\
\smallskip
$\dif_{x_i} \phi$ & partial differential in Banach spaces\\
\smallskip
$\dif_{x_i} \phi(x_1,x_2) \cdot h_i$ & partial differential at $(x_1,x_2)$ in the direction $h_i$\\
\smallskip
$\mathcal{L}(E,F)$ & space of bounded and linear maps\\
\smallskip
$\mbox{Isom} (E,F)$ & space of isomorphisms of Banach spaces\\
\smallskip
${\co}_\varepsilon(1)$, $\co(\varepsilon)$, $\cO (\varepsilon)$, etc. & usual Landau notations
\end{longtable} 


\end{document}